\newtheorem{theorem}{Theorem}[subsubsection]
\newtheorem{corollary}[theorem]{Corollary}%[subsection]
\newtheorem{proposition}[theorem]{Proposition}%[subsection]
\theoremstyle{definition}
\newtheorem{definition}[theorem]{Definition}
\newtheorem{notation}[theorem]{Notation}
\newtheorem{lemma}[theorem]{Lemma}
\theoremstyle{remark}
\newtheorem{remark}[theorem]{Remark}
\newtheorem{example}[theorem]{Example}
\newcommand{\ts}[2]{{_{#1}}\!\!\times_{#2}\!}
\newcommand{\anc}[0]{\mathcal {\varrho}}
\newcommand{\prolong}[0]{A\ts{\anc}{T(\pi)}T(A)}
\renewcommand{\sp}[0]{\hat{+}}
\newcommand{\sd}[0]{\div}
\title{Involution algebroids: a generalisation of Lie algebroids for tangent categories}
\author{Matthew Burke\footnote{Department of Mathematics and Statistics, University of Calgary, Calgary, Canada.}~~and Benjamin MacAdam\footnote{Department of Computer Science, University of Calgary, Calgary, Canada.}}
\begin{document}
\maketitle
\abstract{
	We define involution algebroids which generalise Lie algebroids to the abstract setting of tangent categories.
	As a part of this generalisation the Jacobi identity which appears in classical Lie theory is replaced by an identity similar to the Yang-Baxter equation.
	Every classical Lie algebroid has the structure of an involution algebroid and every involution algebroid in a tangent category admits a Lie bracket on the sections of its underlying bundle.
	As an illustrative application we take the first steps in developing the homotopy theory of involution algebroids.
}
\tableofcontents

\section{Introduction} % (fold)
\label{sec:introduction}

In this paper we generalise Lie algebroids (see \cite{MR2157566}) using a new class of algebraic structures called \emph{involution algebroids}.
A key algebraic component of a Lie algebroid is the Lie bracket on the sections of the underlying vector bundle of the algebroid.
By contrast the definition of involution algebroid does not refer to sections but instead asserts the existence of an involution on (a certain prolongation of) the total space of the bundle satisfying the axioms described in \ref{sub:definition_of_involution_algebroid}.
Under this reformulation the axiom that replaces the Jacobi identity of classical Lie theory is similar to the Yang-Baxter equation (see \ref{rem:yang-baxter}).

The definition of involution algebroid given in \ref{sub:definition_of_involution_algebroid} makes sense in any tangent category (see \cite{DIA_1984__12__A3_0} and \cite{MR3192082}) where the appropriate limits exist and are preserved by the tangent bundle functor.
In particular it does not involve function spaces or indeed rely on the theory of smooth functions at all.
This means that the morphisms of involution algebroids are defined in \ref{def:morphism-inv-algd} as the bundle maps commuting with the involution.
(By contrast the usual definition of morphisms between Lie algebroids requires the idea of `related sections' as described in \cite{MR1037400}.)
Furthermore the appropriate reformulation of the definition of \emph{admissible homotopy} in an involution algebroid in \ref{def:admissible-homotopy} avoids using an integral (or directly appealing to the existence of a connection) as in 1.3 of \cite{MR1973056}.

In classical Lie theory every Lie group has associated to it a Lie algebra (see 3.5 of \cite{MR2157566}) that can be thought of as a linear approximation to the Lie group.
Heuristically speaking the Lie bracket of this approximating Lie algebra encodes the commutator of the group multiplication.
(A similar intuition applies to integrable Lie algebroids.)
In \ref{subsec:the_involution_algebroid_of_a_groupoid} we show that every groupoid in an appropriately complete tangent category can be approximated by an involution algebroid.
The correct replacement for the group commutator turns out to be an operation similar to conjugation.
In \ref{subsub:the_involution_algebroid_of_a_lie_algebroid} we show that every Lie algebroid is an example of an involution algebroid.
In order to ease the calculations in this section we use the Levi-Civita connection as described in 3.1 of \cite{MR2914129} but the final results are independent of the choice of any connection.
The special case of Lie algebras is worked out in elementary terms in \ref{subsub:inv-alg-of-lie-alg} where the involution map is the endo-arrow on $A\times A\times A$ defined by $\sigma:(v, w_H, w_V) \mapsto (w_H, v, w_V + [v, w_H])$.
In the other direction in section \ref{subsubsec:the_lie_bracket_of_an_involution_algebroid} we describe how to define a Lie bracket on the set of sections of an involution algebroid in a tangent category.
Furthermore under the additional assumption that the tangent category has a \emph{unit object $R$} (see \ref{sec:representable-units-function-algebras}) we demonstrate that the Leibniz law (a part of the structure of a Lie algebroid) holds for this bracket also.

The construction of the Lie bracket on the sections of an involution algebroid in \ref{subsubsec:the_lie_bracket_of_an_involution_algebroid} and the definition of the involution algebroid structure on a Lie algebroid in \ref{subsub:the_involution_algebroid_of_a_lie_algebroid} use the same equation.
This implies an injection on objects (see \ref{ssubub:injection_on_objects}) from the category of Lie algebroids to the category of involution algebroids in the category of smooth manifolds.
In \ref{subsub:equivalence_of_categories} we show that the category of involution algebras (involution algebroids over the trivial base space) is equivalent to the category of Lie algebras.

Therefore a natural question arises: is the category of Lie algebroids a full subcategory of the category of involution algebroids?
Although we do not answer this question in this paper we take the first step in this direction by working out some of the homotopy theory of involution algebroids following the theory in \cite{MR1973056}.
The idea is that section 5.1 of \cite{MR1973056} gives an equivalence of categories
\begin{equation*}
\begin{tikzcd}
	LieAlgd \rar[yshift=0.3cm]{w} \rar[phantom]{\perp} & LocGpd \lar[yshift=-0.3cm]{alg}
\end{tikzcd}
\end{equation*}
where $LocGpd$ is the category of local Lie groupoids.
Here $w$ is the \emph{Weinstein local groupoid} construction which forms the quotient of a special kind of path (\emph{admissible} paths) by a special type of homotopy (\emph{admissible} homotopies).
Therefore one approach to understanding the morphisms in $LieAlgd$ is to understand the homotopy theory of Lie algebroids.
In \ref{sec:the_homotopy_theory_of_involution_algebroids} we define admissible paths and admissible homotopies in an involution algebroid, describe how to transport elements of $A$ along these paths and homotopies and work out two special cases that arise in the composite $alg\circ w$.
The logical next step would be to identify the conditions on involution algebroids (and indeed the tangent categories they live in) that allow us to take the quotient involved in the Weinstein groupoid construction.
We leave this as future work.

Late in the preparation of this paper the authors became aware of section 4 of \cite{MR2147171} and proposition 1 in \cite{MR2394515} which describe an involution similar to the one we are proposing here.
It is unclear to the authors of the present paper whether the two involutions are the same and in particular whether the one defined in \cite{MR2147171} satisfies our `flip' axiom (the Yang-Baxter style equation in \ref{rem:yang-baxter}).
Therefore it is possible that \cite{MR2147171} might provide an alternative to the work presented in \ref{subsub:the_involution_algebroid_of_a_lie_algebroid} which shows that all Lie algebroids are examples of involution algebroids.
We keep section \ref{subsub:the_involution_algebroid_of_a_lie_algebroid} unchanged because we need this explicit form for comparison to our work in \ref{subsubsec:the_lie_bracket_of_an_involution_algebroid} and also because we make no assumptions involving the existence of dual bundles and differentials that are required in 4.1 of \cite{MR2147171}.

% section introduction (end)

\section{Background on tangent categories} % (fold)
\label{sec:background_on_tangent_categories}

We formulate most of the ideas in this paper using the axiomatic system given by the theory of tangent categories which was introduced in \cite{DIA_1984__12__A3_0} and further developed in \cite{MR3192082}.
A tangent category is a category $\mathbb{X}$ equipped with an endofunctor $T$ that behaves in an analogous way to the tangent bundle endofunctor on the category of smooth manifolds.

As such this paper is a part of the body of work that reformulates various parts of differential geometry in the language of tangent categories.
For instance in \ref{sub:differential_bundles} we recall the definition of \emph{differential bundle} introduced in \cite{MR3792842} which is the appropriate generalisation of the definition of smooth vector bundle for tangent categories.
In addition in \ref{sub:curve_objects} we present a modified version of a \emph{curve object} (see section 5 of \cite{MR3684725}) that allows one to talk about the solutions to dynamical systems in a tangent category.

In this section we recall the definitions of tangent category, differential bundle and curve object in order to fix notation and to call attention to the results that we require in the sequel.
For more details and examples see \cite{MR3192082}.

\subsection{Additive bundles and tangent categories} % (fold)
\label{sub:additive_bundles_and_tangent_categories}

The tangent space at a point $m$ of a smooth manifold $M$ is regarded as a linear approximation to the region of the space $M$ that is close to $m$.
In the classical case these approximations are represented by a vector spaces which (as we smoothly vary the base point $m$) assemble to form a vector bundle.
In a tangent category we instead use the more general structure of an additive bundle. 
The following definition and remarks are contained in 2.1 and 2.2 of \cite{MR3192082}.

\begin{definition}[Additive bundle]
	If $M$ is an object of a category $\mathbb{X}$ then an \emph{additive bundle $q$ over $M$} is a commutative monoid in the slice category $\mathbb{X} / M$.
\end{definition}

\begin{remark}
	Included in this data is a total space $E$, a bundle projection $q:E \rightarrow M$, a zero section $\xi:M \rightarrow E$ and an addition $+_q:E\ts{q}{q}E \rightarrow E$.
\end{remark}

\begin{remark}
	A \emph{morphism $\phi:q \Rightarrow q'$ of additive bundles} is a pair of arrows $(\phi_1, \phi_0)$ making
	\begin{equation*}
	\begin{tikzcd}
		E \rar{\phi_1} \dar{q} & E' \dar{q'}\\
		M \rar{\phi_0} & M'
	\end{tikzcd}
	\end{equation*}
	commute preserving zero and addition in the fibres.
\end{remark}

If $f:M \rightarrow N$ is a smooth map between smooth manifolds then it lifts to a smooth map $T(f):T(M) \rightarrow T(N)$ between the tangent bundles.
Categorically speaking we can encode various properties of the derivative (such as linearity) in terms of natural transformations between various iterates and limits of the functor $T$.

\begin{definition}[Tangent category]
	A \emph{tangent category} is a category $\mathbb{X}$ equipped with an endofunctor $T$ on $\mathbb{X}$ and natural transformations
	\begin{align*}
		p&:T \Rightarrow id \tag{projection onto base}\\
		0&:id \Rightarrow T \tag{zero section}\\
		+&: T\ts{p}{p}T \Rightarrow T \tag{addition in tangent spaces}\\
		l&: T \Rightarrow T^2 \tag{vertical lift}\\
		c&: T^2 \Rightarrow T^2 \tag{canonical flip}
	\end{align*}
	where we assume that all pullback powers of $p$ (e.g. $T\ts{p}{p}T$ etc..) exist and:-
	\begin{itemize}
		\item all pullback powers of $p$ are preserved by $T$
		\item $0$ is a section of $p$
		\item $cc = id$ and $T(c)cT(c) = c T(c) c$
		\item $cl=l$, $T(l)l = ll$ and $cT(c)l = T(l) c$ 
		\item if $M\in \mathbb{X}$ then $+_M$ and $0_M$ makes each $p_M$ an additive bundle
		\item if $M\in \mathbb{X}$ then $(l_M, 0_M):p \Rightarrow T(p)$ is an additive bundle morphism
		\item if $M\in \mathbb{X}$ then $(c, id):T(p) \Rightarrow p$ is an additive bundle morphism
		\item the following diagram is an equaliser:
		\begin{equation*}
		\begin{tikzcd}
			T(M)\ts{p}{p}T(M) \rar[rightarrowtail]{\mu} & T^2(M) \rar[yshift=3pt]{T(p)} \rar[yshift=-3pt][swap]{0pp} & T(M)
		\end{tikzcd}
		\end{equation*}
		where $\mu(a, b) = 0b + l a$.
	\end{itemize}
\end{definition}

Examples of tangent categories include the category of smooth manifolds and the infinitesimally linear objects in a well-adapted model of synthetic differential geometry (see \cite{MR2244115}).
For more details see \cite{MR3192082}.

In this paper we assume that the tangent bundle functor $T: \mathbb{X} \rightarrow \mathbb{X}$ associated to a tangent category $\mathbb{X}$ preserves certain limits in $\mathbb{X}$.
Rather than assert $T$ preserves all limits (which does not hold in the category of smooth manifolds) we instead assert that $T$ preserves certain specific limits at various points in the text.
In fact even the general assumption that $T$ preserves all limits is not an unreasonable one due to the main result in \cite{MR3725887} which shows that every tangent category embeds into another tangent category for which $T$ is representable and hence preserves all limits.
We also assume that we work in a tangent category `with negatives' which means that the additive bundles are all commutative groups in the appropriate slice category.

% subsection additive_bundles_and_tangent_categories (end)

\subsection{Differential bundles} % (fold)
\label{sub:differential_bundles}

Recall that every Lie algebroid can be obtained by placing extra structure on a smooth vector bundle.
Accordingly it turns out that in order to define involution algebroids in a tangent category we first need to understand \emph{differential bundles} which are the  appropriate generalisation of smooth vector bundles to this setting.
The following is definition 2.3 of \cite{MR3792842}.

\begin{definition}[Differential bundle]
	A \emph{differential bundle} is an additive bundle $q:E \rightarrow M$ equipped with a \emph{lift map} $\lambda: E \rightarrow TE$ such that:
	\begin{itemize}
		\item $(\lambda, 0)$ is an additive bundle morphism
		\item $(\lambda, \xi)$ is an additive bundle morphism
		\item $T(\lambda)\lambda = l\lambda$
		\item the following diagram is an equaliser:
		\begin{equation*}
		\begin{tikzcd}
			E\ts{q}{q}E \rar[rightarrowtail]{\mu} & T(E) \rar[yshift=3pt]{T(q)} \rar[yshift=-3pt][swap]{0qp} & T(M)
		\end{tikzcd}
		\end{equation*}
		where $\mu(a, b) = 0a +_q \lambda b$
	\end{itemize}
\end{definition}

\begin{example}
	If $q:E \rightarrow M$ is a smooth vector bundle and $m\in M$ then there is an isomorphism $\psi: E_m\times E_m \rightarrow T(E_m)$ because $E_m\cong \mathbb{R}^n$ for some $n\in \mathbb{N}$.
	In this case we can define $\lambda(e) = \psi(0_{qe}, e)$.
	In addition the map $\mu(a, b) = \psi(b, 0_m)+_q\psi(0_m, a) \cong \psi(b, a)$ is the lift defined on page 55 of \cite{MR1202431} where $m = qa = qb$.
\end{example}

\begin{definition}[Morphism of differential bundles]
	A \emph{morphism $\phi: q \Rightarrow q'$ of differential bundles} is a pair $(\phi_1, \phi_0)$ of arrows making
	\begin{equation*}
	\begin{tikzcd}
		E \rar{\phi_1} \dar{q} & E' \dar{q'}\\
		M \rar{\phi_0} & M'
	\end{tikzcd}
	\end{equation*}
	commute.
	Furthermore a \emph{linear differential bundle morphism} is a differential bundle morphism that preserves the lift: $T(\phi_1)\lambda = \lambda' \phi_1$.
\end{definition}

Next we recall two results about differential bundles that we require in the sequel.

\begin{lemma}[Pullbacks of differential bundles]\label{lem:pullbacks-in-caty-of-diff-bundles}
	Let 
	$$(q_0, +_0, \xi_0, \lambda_0) \text{ , }(q_1, +_1, \xi_1, \lambda_1)\text{ and }(q_2, +_2, \xi_2, \lambda_2)$$
	be differential bundles with total spaces $E_0$, $E_1$ and $E_2$ and base spaces $M_0$, $M_1$ and $M_2$ respectively.
	If $\phi: q_1 \Rightarrow q_0$ and $\psi: q_2 \Rightarrow q_0$ are linear differential bundle morphisms then
	\begin{equation*}
	q_1\ts{\phi}{\psi}q_2:=(q_1\times q_2, +_1\times +_2, \xi_1\times \xi_2, \lambda_1\times \lambda_2)
	\end{equation*}
	is a differential bundle with total space $E_1\ts{\phi_1}{\psi_1}E_2$ and base space $M_1\ts{\phi_0}{\psi_0}M_2$.
	Moreover $q_1\ts{\phi}{\psi}q_2$ is the pullback of $\phi$ and $\psi$ in the category of differential bundles.
	\begin{proof}(Sketch.)
		First recall that we assume all of the limits involved exist and are preserved by $T$.
		Second recall 2.16 of \cite{MR3792842} which states that if $\phi$ is a linear morphism of differential bundles then $\phi$ automatically preserves the addition and zero.
		It is then a lengthy but straightforward calculation to check that the differential bundle axioms hold.
		As a representative example the addition is unital because:
		\begin{align*}
			\xi q + id &= (\xi_1q_1\times \xi_2q_2)+(id\times id)\\
			&= (\xi_1q_1+_1 id)\times (\xi_2 q_2+_2 id)\\
			&= id\times id = id
		\end{align*}
		where we have written $+$ for $+_1\times +_2$, $\xi = \xi_1\times \xi_2$ and $q = q_1\times q_2$.
		Note also that 
		\begin{equation*}
		\begin{tikzcd}[column sep = 0.4cm]
			(E_1 \ts{\phi_1}{\psi_1} E_2)\ts{q}{q}(E_1\ts{\phi_1}{\psi_1}E_2)  \rar[rightarrowtail]{\mu} & T(E_1\ts{\phi_1}{\psi_1}E_2) \rar[yshift=3pt]{T(q)} \rar[yshift=-3pt][swap]{0qp} & T(M_1\ts{\phi_0}{\psi_0}M_2)
		\end{tikzcd}
		\end{equation*}
		is an equaliser because limits commute with limits.
	\end{proof}
\end{lemma}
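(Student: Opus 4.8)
The plan is to construct the candidate bundle by hand, check the four differential bundle axioms componentwise, and then verify the universal property directly. Write $P := E_1\ts{\phi_1}{\psi_1}E_2$ for the proposed total space and $N := M_1\ts{\phi_0}{\psi_0}M_2$ for the proposed base space; both exist by assumption. Since $\phi$ and $\psi$ are bundle morphisms we have $\phi_0 q_1 = q_0\phi_1$ and $\psi_0 q_2 = q_0\psi_1$, so the pair $(q_1\pi_1, q_2\pi_2)$ satisfies the compatibility needed to induce a projection $q\colon P\to N$ via the universal property of $N$, where $\pi_1,\pi_2$ are the pullback projections out of $P$. The remaining structure maps are the obvious corestrictions of $\xi_1\times\xi_2$, $+_1\times+_2$ and $\lambda_1\times\lambda_2$, and the first genuine task is to confirm that each of these really does corestrict to the relevant pullback.

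This is the step where the linearity hypothesis is used. By 2.16 of \cite{MR3792842} a linear differential bundle morphism automatically preserves the zero and the addition as well as the lift, so both $\phi$ and $\psi$ preserve $\xi$, $+$ and $\lambda$. For the lift this says $T(\phi_1)\lambda_1 = \lambda_0\phi_1$ and $T(\psi_1)\lambda_2 = \lambda_0\psi_1$; together with the defining equation $\phi_1\pi_1 = \psi_1\pi_2$ on $P$ this gives $T(\phi_1)\lambda_1\pi_1 = \lambda_0\phi_1\pi_1 = \lambda_0\psi_1\pi_2 = T(\psi_1)\lambda_2\pi_2$, which is exactly the condition for $\lambda_1\times\lambda_2$ to factor through the pullback $T(E_1)\ts{T\phi_1}{T\psi_1}T(E_2)$. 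Because we assume $T$ preserves the pullback defining $P$, this fibre product is canonically $T(P)$, so $\lambda:=\lambda_1\times\lambda_2$ corestricts to a map $P\to T(P)$. The analogous but easier computations, using preservation of $\xi$ and $+$, show that $\xi_1\times\xi_2$ and $+_1\times+_2$ corestrict as well.

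With the structure maps in place, I would verify the four axioms. Each of the first three is inherited componentwise: every arrow involved is a corestriction of a product of arrows, the joint pair $(\pi_1,\pi_2)$ is monic, and the corresponding identity holds separately for $q_1$ and for $q_2$, so the representative unit computation in the sketch is typical of the whole list. The one axiom needing more than a componentwise glance is the equaliser condition. Here I would note that, because $T$ preserves the pullbacks defining $P$ and $N$, all four objects in the equaliser diagram for $q$ are finite limits built from the $E_i$, $M_i$ and their tangent bundles; since limits commute with limits, the equaliser diagram for $q$ is the pullback of the two equaliser diagrams for $q_1$ and $q_2$, and hence is itself an equaliser.

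Finally I would establish the universal property. Given a competing cone consisting of a differential bundle $q'$ together with morphisms $f\colon q'\Rightarrow q_1$ and $g\colon q'\Rightarrow q_2$ satisfying $\phi f=\psi g$, the universal properties of the pullbacks $P$ and $N$ in $\mathbb{X}$ produce a unique pair of arrows into $P$ and $N$ factoring $f$ and $g$; this pair is a differential bundle morphism because the structure on $P$ is defined componentwise and $f,g$ already preserve that structure, and uniqueness is inherited directly from $\mathbb{X}$. The projections $\pi_1,\pi_2$ are themselves linear, so the same argument works whether one takes the category of differential bundles with all morphisms or with only the linear ones. I expect the main obstacle to be purely bookkeeping: keeping track of which composite corestricts to which pullback, and invoking the hypothesis that $T$ preserves each of these pullbacks at the right moment. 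The only genuinely conceptual point is the equaliser axiom, which rests on the fact that a pullback of equalisers is again an equaliser.
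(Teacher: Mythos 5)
Your proposal is correct and follows essentially the same route as the paper's own (sketch) proof: both invoke 2.16 of \cite{MR3792842} to get preservation of zero and addition from linearity, check the differential bundle axioms componentwise, and deduce the equaliser axiom from the preservation of the relevant pullbacks by $T$ together with the fact that limits commute with limits. Your write-up is in fact more complete than the paper's sketch, since you also spell out why the structure maps corestrict to the pullbacks and verify the universal property in the category of differential bundles, which the paper asserts without further comment.
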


The following lemma is 2.5 in \cite{MR3792842}.

\begin{lemma}\label{lem:T-of-diff-bundle}
  If $(E, q, +_q, \xi, \lambda)$ is a differential bundle then 
  $$(T(E), T(q), T(+_q), T(\xi), cT(\lambda))$$
  is a differential bundle.
\end{lemma}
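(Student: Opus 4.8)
The plan is to obtain every piece of the structure $(T(E), T(q), T(+_q), T(\xi), cT(\lambda))$ by transporting the corresponding piece of $(E,q,+_q,\xi,\lambda)$ through the functor $T$ and then correcting by the canonical flip $c$, using naturality of $c$ together with the tangent-category coherences to absorb the discrepancies. First, since we assume $T$ preserves the relevant limits (in particular the pullback $E\ts{q}{q}E$), the functor $T$ carries the commutative monoid $(q,+_q,\xi)$ of $\mathbb{X}/M$ to a commutative monoid $(T(q),T(+_q),T(\xi))$ of $\mathbb{X}/T(M)$; this gives the additive bundle underlying the claimed differential bundle for free. It then remains to verify the four lift axioms for $\lambda' := cT(\lambda): T(E) \to T^2(E)$.

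Second, I would check the two additive-bundle-morphism conditions and the identity $T(\lambda')\lambda' = l\lambda'$. For the base equations I only need naturality of $c$ and two coherences extracted from the axiom that $(c,id):T(p)\Rightarrow p$ is an additive bundle morphism: its underlying square reads $p_{T(E)}c_E = T(p_E)$ (whence $T(p_E)c_E = p_{T(E)}$ by $cc=id$), and its preservation of zero reads $c_E T(0_E) = 0_{T(E)}$, i.e. $cT(0)=0_T$. Using $T(p_E)\lambda = \xi q$ I get $p_{T(E)}\lambda' = T(p_E)\lambda = T(\xi)T(q)$, and using $T(q)\lambda = 0_M q$ together with naturality of $c$ and $cT(0)=0_T$ I get $T^2(q)\lambda' = 0_{T(M)}T(q)$; fibrewise additivity in both cases follows by applying $T$ to the original morphism conditions and conjugating by $c$. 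For the third axiom I compute
\begin{align*}
T(\lambda')\lambda' &= T(c_E)T^2(\lambda)c_E T(\lambda) = T(c_E)c_{T(E)}T^2(\lambda)T(\lambda)\\
&= T(c_E)c_{T(E)}T(l_E)T(\lambda),
\end{align*}
using naturality of $c$ and $T(\lambda)\lambda = l\lambda$, and then reduce $T(c_E)c_{T(E)}T(l_E)$ to $l_{T(E)}c_E$ via $cl=l$ and the coherence $cT(c)l = T(l)c$, which yields $T(\lambda')\lambda' = l_{T(E)}\lambda'$ as required.

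The main obstacle is the equaliser axiom, and here the flip does the real work. Since $T$ preserves the defining equaliser of $(E,q,+_q,\xi,\lambda)$ and also preserves the pullback, the diagram
\begin{equation*}
T(E)\ts{T(q)}{T(q)}T(E) \xrightarrow{\;T(\mu)\;} T^2(E) \rightrightarrows T^2(M)
\end{equation*}
is an equaliser of the pair $T^2(q)$ and $T(0qp) = T(0_M)T(q)T(p_E)$. I would then post-compose the equalising map with the isomorphism $c_E$: the map $c_E T(\mu)$ is the equaliser of $\big(T^2(q)c_E,\; T(0_M)T(q)T(p_E)c_E\big)$, which by naturality of $c$ and $T(p_E)c_E = p_{T(E)}$ equals $\big(c_M T^2(q),\; T(0_M)T(q)p_{T(E)}\big)$. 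Post-composing both legs with the further isomorphism $c_M$ and using $c_Mc_M = id$ and $c_M T(0_M) = 0_{T(M)}$ turns these legs into exactly $\big(T^2(q),\; 0_{T(M)}T(q)p_{T(E)}\big)$, which are the two legs demanded by the differential bundle axiom for $T(q)$. Finally I would identify the equalising map itself: writing $\mu$ as $T(+_q)$ precomposed with the pairing $(a,b)\mapsto(0_E a,\lambda b)$, naturality of $c$ on the pullback together with $c_E T(0_E) = 0_{T(E)}$ gives $c_E T(\mu) = \mu'$, the map $\mu'(a,b)=0a +_{T(q)} \lambda' b$ attached to the candidate bundle. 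Hence $\mu'$ is the required equaliser and all axioms hold. The only delicate bookkeeping is ensuring that the two parallel legs are corrected by the same isomorphisms in the right order, which is precisely what the coherences $T(p)c=p_T$ and $cT(0)=0_T$ guarantee.
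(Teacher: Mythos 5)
Your proposal cannot be compared against an in-paper argument, because the paper does not prove this lemma at all: it is imported verbatim as Lemma 2.5 of \cite{MR3792842}, with the citation standing in for the proof. Your argument supplies exactly what that citation outsources, and it is correct. The skeleton is the expected one and each of the delicate steps checks out: applying $T$ to the commutative monoid $(q,+_q,\xi)$ in $\mathbb{X}/M$ does give the additive bundle $(T(q),T(+_q),T(\xi))$ once $T$ preserves the pullback powers of $q$; the identity $T(c_E)c_{TE}T(l_E)=l_{TE}c_E$ does follow from $cT(c)l=T(l)c$ together with $cc=id$ and functoriality, which closes the axiom $T(\lambda')\lambda'=l\lambda'$; and your handling of universality of the lift — transport the $T$-image of the original equaliser along the isomorphism $c_E$ on the middle object, then along $c_M$ on the parallel pair (post-composing a parallel pair with a monomorphism preserves equalisers), and finally identify $c_ET(\mu)=\mu'$ via naturality of $c$ on the pullback and $cT(0)=0_T$ — is the right way to finish and is where the flip genuinely earns its keep. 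Three small points of hygiene. First, the identity you quote as ``$T(p_E)\lambda=\xi q$'' should read $p_E\lambda=\xi q$ (the original axiom); what you actually use is its $T$-image $T(p_E)T(\lambda)=T(\xi)T(q)$, and the displayed chain should be $p_{TE}\lambda'=p_{TE}c_ET(\lambda)=T(p_E)T(\lambda)=T(\xi)T(q)$. Second, the addition appearing in $\mu'$ is the addition of the bundle $T(T(q))$, namely $T^2(+_q)$, not $T(+_q)$ — though the paper's own notation $\mu(a,b)=0a+_q\lambda b$ is loose in exactly the same way, so this is cosmetic. Third, you should state your standing hypotheses up front: the argument needs $T$ (and $T^2$) to preserve the pullback $E\ts{q}{q}E$ and $T$ to preserve the defining equaliser; the present paper assumes such preservation globally, and in \cite{MR3792842} preservation of the pullback powers by all iterates of $T$ is built into the very definition of differential bundle, so nothing is circular, but the dependence deserves to be explicit.
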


Recall that a vector bundle over the singleton base space is a vector space.
As in \cite{MR3792842} we define a \emph{differential object} to be a differential bundle over the terminal object.
The following proposition (3.4 in \cite{MR3792842}) gives a more elementary characterisation of differential objects.

\begin{proposition}\label{def:differential-object}
  The following are equivalent:
  \begin{enumerate}
  \item $(E, \oplus_E, \xi_E, \lambda_E)$ is a differential bundle over the terminal object.
  \item $(E, \oplus_E, \xi_E)$ is a commutative monoid object such that.
    \begin{enumerate}
    \item $TE$ satisfies the following biproduct diagram in $\mathsf{CMon}(\mathbb{X})$
      \[
        \begin{tikzcd}
          E \ar[rr, equals] \ar[rd, "\lambda"] && E \\
          & T(E) \ar[ru, "\hat{p}"] \ar[rd, "p"] \\
          E \ar[rr, equals] \ar[ru, "0"] && E
        \end{tikzcd}
      \]
    \item The two additions are compatible:
      \[
        \begin{tikzcd}
          E \ar[r, "!_E"] \ar[d, "0_E"'] & 1 \ar[d, "\xi"] \\
          T(E) \ar[r, "\hat{p}"'] & A
        \end{tikzcd}
        \begin{tikzcd}
          T_2E \ar[d, "+_p"] \ar[r, "( \hat{p}\pi_0 {,} \hat{p}\pi_1 )"] & E \times E \ar[d, "+_A"] \\
          T(A) \ar[r, "\hat{p}"'] & E
        \end{tikzcd}
      \]
    \item The biproduct structure is coherent with the lift $l$:
      \[
        \begin{tikzcd}
          TE \ar[d, "\hat{p}"'] \ar[r, "l"] & T^2E \ar[d, "\hat{p}"]\\
          E & TE \ar[l, "\hat{p}"]
        \end{tikzcd}
      \]
    \end{enumerate}
  \end{enumerate}
\end{proposition}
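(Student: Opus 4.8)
The plan is to isolate one structural consequence of the differential-bundle axioms and then read conditions (a)--(c) directly off it. The key observation is that over a terminal base the final equaliser diagram degenerates: the two parallel arrows $T(q)$ and $0qp$ both target $T(1)$, and since $T$ preserves the terminal object (so that $T(1)\cong 1$) they coincide. Hence the equaliser axiom becomes \emph{equivalent} to the assertion that
\[
  \mu : E\times E \longrightarrow TE,\qquad \mu(a,b)=0a +_q \lambda b,
\]
is an isomorphism. I would record this first, as it is the engine for both implications. It exhibits $TE$ as a product of two copies of $E$ with injections $0$ and $\lambda$, and it lets me \emph{define} the second projection as $\hat p := \pi_1\mu^{-1}$, the retraction of $\lambda$; the ordinary tangent projection $p$ is the retraction of $0$.

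For (1) $\Rightarrow$ (2) I would verify the three clauses in turn. The commutative monoid $(E,\oplus_E,\xi_E)$ is simply the additive-bundle structure read through the equivalence $\mathbb{X}/1\simeq\mathbb{X}$. For the biproduct clause (a) I would check the composites $\hat p\lambda = \mathrm{id}$, $p\,0=\mathrm{id}$, $\hat p\,0 = \xi!$ and $p\lambda=\xi!$ together with the completeness relation $0p +\lambda\hat p = \mathrm{id}_{TE}$, all of which fall straight out of the description of $\mu$ and $\mu^{-1}$; the genuine content is that these four maps constitute a biproduct \emph{in} $\mathsf{CMon}(\mathbb{X})$, so that $p,\hat p,0,\lambda$ must all be monoid homomorphisms, and this is exactly where the hypotheses that $(\lambda,0)$ and $(\lambda,\xi)$ are additive bundle morphisms are consumed. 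These same two morphism conditions, rephrased through $\hat p$, are what clause (b) records: they say precisely that the tangent addition $+_p$ on $TE$ and the monoid addition $\oplus_E$ on $E$ are intertwined by $\hat p$. Finally clause (c) is the lift axiom $T(\lambda)\lambda = l\lambda$ transcribed into a statement about $\hat p$ and $l$.

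For the converse (2) $\Rightarrow$ (1) I would run the dictionary backwards: take $\lambda$ to be the injection named in (a), reconstruct $\mu$ from the biproduct data, and observe that (a) forces $\mu$ to be invertible, which reinstates the equaliser axiom since any isomorphism is the equaliser of a pair of equal maps into $1$. The additive-bundle-morphism axioms for $(\lambda,0)$ and $(\lambda,\xi)$ then follow from (a) and (b), and the lift axiom from (c).

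The step I expect to be the main obstacle is the bookkeeping hidden in clause (a): establishing not merely that $TE\cong E\times E$ as objects but that the four structure maps are commutative-monoid homomorphisms satisfying the biproduct equations \emph{in $\mathsf{CMon}(\mathbb{X})$}. This is where the two distinct additions on $TE$ --- the tangent addition $+_p$ and the fibrewise addition $T(\oplus_E)$ coming from the monoid --- must be carefully kept apart and shown compatible, and where a short coherence argument involving $l$ (and the flip $c$ via Lemma \ref{lem:T-of-diff-bundle}, which makes $TE$ itself a differential object) is needed to keep the lift aligned. Once this biproduct presentation is secured, the remaining identities are routine diagram chases.
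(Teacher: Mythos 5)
Your proposal is correct in substance, but note that the paper itself does not prove this statement: it imports it as Proposition 3.4 of \cite{MR3792842}, so the only proof to compare against is the one in that reference, and your sketch reconstructs it faithfully. The engine you identify is the right one: over a terminal base the two parallel maps $T(q)$ and $0qp$ land in $T(1)$ and hence agree, so the universal-lift equaliser axiom collapses to the assertion that $\mu(a,b) = 0a + \lambda b$ is an isomorphism $E \times E \cong T(E)$, and $\hat{p} := \pi_1 \mu^{-1}$ is the retraction of $\lambda$; clauses (a)--(c) then fall out as you describe, with $(\lambda, 0)$ feeding the biproduct clause for the fibrewise addition $T(\oplus_E)$, with $(\lambda, \xi)$ together with the interchange law for the two additions on $T(E)$ feeding clause (b), and with the lift axiom $T(\lambda)\lambda = l\lambda$ feeding clause (c). The one point you should make explicit rather than assert in passing is that $T(1) \cong 1$ is \emph{not} a consequence of the bare tangent-category axioms: in the tangent category of affine schemes (the opposite of commutative rings with the dual-numbers functor) one has $T(1) = \mathrm{Spec}\,\mathbb{Z}[\epsilon] \neq 1$. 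So the degeneration of the equaliser diagram requires the standing Cartesianness hypothesis that finite products, including the terminal object, exist and are preserved by $T$. That hypothesis is in force both in this paper (which assumes $T$ preserves the relevant limits) and in Section 3 of \cite{MR3792842}, and it is in fact already needed merely to state clause (a), since the commutative-monoid structure on $T(E)$ appearing there is $T(\oplus_E)$, which only makes sense once $T(E \times E) \cong T(E) \times T(E)$. With that caveat recorded, both of your directions go through as sketched, and your closing remark correctly locates the real bookkeeping: keeping $+_p$ and $T(\oplus_E)$ apart and verifying that the four structure maps are homomorphisms in $\mathsf{CMon}(\mathbb{X})$.
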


% subsection differential_bundles (end)

\subsection{Affine bundles} % (fold)
\label{sub:affine_structures}

The definition of the Lie bracket in a tangent category may be simplified using the \emph{affine structure}
of the second tangent bundle. If $V$ is a differential object, we say that $A$ is
\emph{affine} over $V$ if there is an action of $V$ on $A$ 
\[
  \sp: A \times V \to A
\]
which is linear in $V$, so that $(v + v') \sp a = v \sp (v' \sp a)$.
There is also a \emph{strong difference}
\[
  \sd: A \times A \to V
\]
so that $a \sp (a \sd a') = a$. We can find a similar structure on the tangent
bundle of a differential bundle and the following proposition lays out the algebraic
necessities to find this structure.
For a textbook treatment of these ideas in the context of synthetic differential geometry see V.4 of \cite{MR1083355}.

\begin{proposition}
  Let $\pi: A \rightarrow M$ be a differential bundle. 
  There are two additive bundle structures on $T(A)$, $+_{T\pi} := T(+_\pi)$ and $+_p$ that satisfy the following identities:
  \begin{enumerate}
  \item Interchange: $(x +_{T\pi} y) +_p (w +_{T\pi} z) = (x +_p w) +_{T\pi} (y +_p z)$.
  \item If $v:A$, then
    \[(\lambda(v) +_{T\pi} 0p(a)) +_p a = (\lambda(v) +_p \xi\pi(a)) +_{T\pi} a\]
  \item If $p(x) = p(y), T(\pi)(x) = T(\pi)(y)$, then
    \[(x -_A y) -_{T\pi} 0py = (x -_{T\pi} y) -_A \xi\pi y\]
  \item
    \begin{itemize}
    \item $T(\pi)((x -_A y) -_{T\pi} 0py = 0\pi p y$
    \item $p((x -_{T\pi} y) -_A T(\xi)T(\pi)y) = \xi p T(\pi)y$
    \end{itemize}
  \item $p((x -_A y) -_{T\pi} 0py) = 0ppx, T(\pi)(x -_A y) -_{T\pi} 0py = T(\pi)(x)$
  \end{enumerate}
\end{proposition}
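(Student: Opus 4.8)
The plan is to exhibit the two additive bundle structures on $T(A)$ and then verify the five families of identities, treating the interchange law (1) as the structural heart of the argument from which (2)--(5) follow by formal manipulation. The addition $+_p$ is simply the canonical additive bundle structure carried by the tangent bundle $p_A\colon T(A)\to A$, which is part of the tangent category data; since we work with negatives it is a commutative group bundle, so the subtraction $-_A$ is available. For $+_{T\pi}$ I would apply Lemma~\ref{lem:T-of-diff-bundle} to $\pi\colon A\to M$, producing the differential bundle $(T(A), T\pi, T(+_\pi), T\xi, cT\lambda)$; its additive part is the group bundle $+_{T\pi}=T(+_\pi)$ over $T\pi\colon T(A)\to T(M)$, with subtraction $-_{T\pi}$. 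Because $T$ preserves the pullback $A\times_\pi A$, the domain of $T(+_\pi)$ is exactly $T(A)\times_{T\pi}T(A)$, so both operations genuinely are additive bundle structures on the same object.

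Next, the interchange law. I claim (1) is precisely the statement that $T(A)$ is a double additive bundle, and that it follows from naturality of the tangent structure maps. The key observation is that for \emph{any} arrow $g$ of $\mathbb{X}$ the map $T(g)$ is automatically a morphism of the tangent additive bundles, since $p\colon T\Rightarrow\mathrm{id}$, $0\colon\mathrm{id}\Rightarrow T$ and $+\colon T\times_p T\Rightarrow T$ are natural. Applying naturality of $+$ to $g=+_\pi$, and using that $T$ preserves the defining pullbacks (so that the tangent addition on $T(A\times_\pi A)$ is identified, via the two projections, with the componentwise $+_p$-addition on $T(A)\times_{T\pi}T(A)$), yields
\[(u_1 +_p u_2) +_{T\pi} (u_1' +_p u_2') = (u_1 +_{T\pi} u_1') +_p (u_2 +_{T\pi} u_2'),\]
which is exactly (1) after relabelling. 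So (1) costs only one naturality square together with the limit-preservation hypothesis.

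I would then derive (2)--(5) from (1) and the group structure. Identity (2) is (1) specialised so that two of its arguments become units: the $+_{T\pi}$-unit $T\xi T\pi(a)$ (denoted $\xi\pi(a)$) and the $+_p$-unit $0p(a)$, so that the unit laws collapse the expression to the asserted form; here one records $p\lambda=\xi\pi$, $T\pi\lambda=0\pi$, $\pi\xi=\mathrm{id}$ and the naturality of $0$ and $T\xi$ to see that every term is well typed. Identity (3) is the subtraction form of the same specialisation, legitimate because the hypotheses $p(x)=p(y)$ and $T\pi(x)=T\pi(y)$ place all four differences in the correct fibres. Identities (4) and (5) are read off by applying the projections $p$ and $T\pi$ to the difference expressions, using that $T\pi$ is a homomorphism for $+_p$ while being constant along $+_{T\pi}$, that dually $p$ is a homomorphism for $+_{T\pi}$ while being constant along $+_p$ (both by naturality of $p$ and $+$ applied to $\pi$ and $+_\pi$), and that each zero section is carried to a zero; the mixed terms then vanish, leaving the stated zeros.

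The main obstacle will be organisational rather than conceptual: one must keep the two group structures, their zero sections $0_A$ and $T\xi$, and the lift $\lambda$ straight throughout, and verify at each step that the arguments genuinely inhabit the pullbacks over which $+_p$, $+_{T\pi}$, $-_A$ and $-_{T\pi}$ are defined. Once the unit and projection compatibilities are tabulated in advance and the interchange law (1) is in hand, the remaining identities are long but routine calculations requiring no further structural input beyond the limit-preservation and negatives hypotheses.
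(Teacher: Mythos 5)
Your proposal is correct and follows essentially the same route as the paper: both take $+_p$ from the tangent structure and $+_{T\pi}=T(+_\pi)$ from Lemma \ref{lem:T-of-diff-bundle}, treat the interchange law as the key input, and then obtain (2)--(4) by exactly the same manipulations (rewriting $a$ as $T(\xi)T(\pi)a +_{T\pi} a$, applying interchange, collapsing units, and reading off (4) by applying $p$ and $T(\pi)$ with naturality of $0$, $p$ and $\xi$). The only difference is that you supply an argument for interchange itself (naturality of $+$ at $+_\pi$ together with $T$ preserving the pullback $A\ts{\pi}{\pi}A$), where the paper simply cites Lemma 2.8 of \cite{MR3792842}; your argument is the standard proof of that lemma, so this is a harmless (indeed welcome) addition rather than a divergence.
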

\begin{proof}
\begin{itemize}
\item[(1)] Lemma 2.8 in \cite{MR3792842}.
\item[(2)] Calculate:
  \begin{align*}
    (\lambda(v) +_{T\pi} 0pa) +_p a
    &= (\lambda(v) +_{T\pi} 0pa) +_p (T(\xi)T(\pi)a +_{T\pi} a) \\
    &= (\lambda(v) +_p T(\xi)T(\pi)a) +_{T\pi} (0pa +_p a)
    & \tag{interchange}\\
    &= (\lambda(v) +_p T(\xi)T(\pi)a) +_{T\pi} a 
  \end{align*}
\item[(3)]
  Calculate:
  \begin{align*}
    (x -_A y) -_{T\pi} 0py
    &= (x +_p -_Ay) +_{T\pi} -_{T\pi}0py\\
    &= (x +_p -_Ay) +_{T\pi} (-_{T\pi}(y) +_p -_A-_{T\pi}(y)) \\
    &= (x +_{T\pi} -_{T\pi}(y)) +_p (-_Ay +_{T\pi} -_A-_{T\pi}(y)) \\
    &= (x +_{T\pi} -_{T\pi}(y)) +_p (-_Ay +_{T\pi} -_{T\pi}-_Ay) \\
    &= (x -_{T\pi} y) -_A T(\xi)T(\pi)y
  \end{align*}
\item[(4)]
  Observe that:
  \begin{align*}
    T(\pi)((x -_A y) -_{T\pi} 0py)
    &= T(\pi)0py \\
    &= 0\pi p y
  \end{align*}
  and
  \begin{align*}
    p((x -_{T\pi} y) -_A T(\xi)T(\pi)y)
    &= pT(\xi)T(\pi)y \\
    &= \xi p T(\pi)y
  \end{align*}
\end{itemize}
\end{proof}

\begin{definition}[Affine bundle]
  Let $\pi: A \to M$ be a differential bundle. We say that
  $q: B \to Q$ is \emph{affine} over $\pi$ if there are maps:
  \begin{itemize}
  \item Strong difference: $\sd : B \ts{q}{q} B \to A$.
  \item Strong Sum: $\sp : B \times A \to B$
  \end{itemize}
  so that:
  \begin{itemize}
  \item Associativity: if $\pi(v) = \pi(w)$ then
    $(a \sp v) \sp w = a \sp (v +_\pi w)$
  \item Inverse: if $q(a) = q(b)$ then $a \sp (a \sd b) = a$
  \end{itemize}
  where $v, w\in A$ and $a, b\in B$.
\end{definition}

\begin{example}[The tangent bundle of a differential bundle]
  Consider the tangent bundle of a differential bundle $\pi: A \to M$.
  By the earlier lemma, we may define the bundle:
  \[
    q: T(A) \to A \ts{\pi}{p} T(M)
  \]
  and we have an affine structure induced by:
  \begin{itemize}
  \item $a \sp v :=  (\lambda(v) +_{T\pi} 0pa) +_{Tp} a$
  \item $a \sd b :=
    \{
      (a -_A b) -_{T\pi} 0pb
    \}$
  \end{itemize}
\end{example}
\begin{remark}
  Consider the affine structure on the second tangent bundle of
  some object $M$. 
  The Lie bracket $[X,Y]_M$ is $cT(Y)X\sd T(X)Y$.
\end{remark}

% subsection affine_bundles (end)

% subsection representable_units
\subsection{Units and function algebras}
\label{sec:representable-units-function-algebras}

In the definition of Lie algebroid the function algebra $C^\infty(M, \mathbb{R})$ is used to formulate the Leibniz law (see \ref{subsub:the_involution_algebroid_of_a_lie_algebroid}).
However in a general tangent category there need not be an object that can appropriately play the role of $\mathbb{R}$ and so we introduce a special type of tangent category for which this object does exist.
Therefore in this section we describe a \emph{unit object $R$} that satisfies various universal properties that make it a good surrogate for the real line.
In fact we only give a sketch of the theory of unit objects relevant for this paper: the general theory will be further explored in \cite{GLM}.

We use a unit object in two places in this paper.
First in \ref{subsubsec:the_lie_bracket_of_an_involution_algebroid} we prove that in a tangent category with unit object every involution algebroid has a Lie bracket on its set of sections that satisfies the Leibniz law.
Second in \ref{sub:transport_along_a_paths} we use a unit object that is also a curve object (see \ref{sub:curve_objects}) to construct a line bundle that is useful in the homotopy theory of Lie algebroids.
Now we give the definition of unit object in two stages.
In the sequel we use the term `unit object' for the second type of unit object (i.e. a `fibred' unit object).

\begin{definition}
  A \emph{(non-fibred) unit object} in a tangent category is a differential object $R$ with a point $u: 1 \to R$ such that for every morphism $f:V\times W \rightarrow E$ of differential objects there exists a unique lift
  \[
    \begin{tikzcd}
      V \times W \ar[r,"f"] \dar[swap]{(id, u!)} & E \\
      R \times (V \times W) \ar[ru, swap, dashed, "\hat{f}"]
    \end{tikzcd}
  \]
	such that $\hat{f}$ is linear in $R$. 
	If further $f$ is linear is $W$ then $\hat{f}$ is also.
\end{definition}

\begin{notation}[Scalar multiplication]
	If $M$ is a differential object then the unique map $\hat{id}:R \times M \rightarrow M$ lifting $id_M$ is denoted by the infix operator $\bullet_M$.
	In this notation: $\hat{f}(r, v, w) = r\bullet_M f(v, w)$.
\end{notation}

Before giving the definition of (fibred) unit object we recall two facts.
First 2.5 of \cite{MR3192082} tells us that every slice category of a tangent category is a tangent category.
Second 5.12 of \cite{MR3792842} tells us that differential bundles over $M$ are differential objects in the slice tangent category over $M$.

\begin{definition}[Unit object]\label{def:unit-object}
	A \emph{(fibred) unit object} is a differential object $R$ with a point $u:1 \rightarrow R$ such that:-
  \begin{itemize}
  \item The trivial bundle $\pi_1: R \times M \to M$ is a unit in the slice tangent category over $M$.
  \item Multiplication is preserved by substitution functors: if $\pi:A \rightarrow M$ is a differential bundle, $f:N \rightarrow M$ and $f^*(A)$ is the pullback of $\pi$ along $f$ then
    \[
      \begin{tikzcd}
        (R\times M, \pi_0) \times (f^*A, f^*\pi) \rar{id\times h_f} \arrow{d}[swap]{\bullet_{(f^*(A), f^*(\pi))}}
        & (R\times M, \pi_0) \times (A,\pi) \dar{\bullet_{(A, \pi)}} \\
        (f^*A, f^*\pi) \rar{h_f} & (A,\pi)
      \end{tikzcd}
    \]
    commutes where $h_f$ is the natural bundle map $f^*(A) \Rightarrow A$.
  \item Multiplication is torsion-free: $(\lambda_R r \bullet^T_\pi v) = r \bullet_{T\pi} v +_p T(\xi\pi)v$.
  \end{itemize}
\end{definition}

\begin{notation}[Scalar multiplication in fibres]
	If $\pi:A \rightarrow M$ is a differential bundle then we write:-
	\begin{itemize}
		\item $\bullet_{\pi}:R\times A \rightarrow A$ for $\bullet_{(A, \pi)}$
		\item $\bullet_p:R\times T(A) \rightarrow T(A)$ for $\bullet_{(T(A), p)}$
		\item $\bullet_{T(\pi)}:R \times T(A) \rightarrow T(A)$ for $\bullet_{(T(A), T(\pi))}$
		\item $\bullet_{\pi}^T:T(R)\times T(A) \rightarrow T(A)$ for $T(\bullet_{(A, \pi)})$
	\end{itemize}
	and note that $\bullet_p: R\times T(M) \rightarrow T(M)$ makes sense for any object $M$.
\end{notation}

Now we state without proof some consequences of \ref{def:unit-object} and again refer to \cite{GLM} for more details.
In a tangent category (with negatives) with a unit object $R$:
  \begin{itemize}
  \item The object $R$ is a commutative ring (if the tangent category does not
    negatives, this will be a commutative rig).
  \item The category of differential objects and linear maps is a full subcategory
    of $R$-modules: every differential object has a canonical $R$-module
    structure and a morphism is linear if and only if it is an $R$-module morphism.
  \item We may rewrite $\lambda_E(e) = (\lambda(u) \bullet^T 0e)$.
  \item When using local coordinates induced by a connection $(K, H)$ (see \ref{prop:decomp-full-connection}) the derivative of scalar multiplication has the form
\[
  (r_1,r_2) \bullet^T_\pi (w_H, v, w_V) = (r_1\bullet_{\pi}w_H, v, r_1\bullet_\pi w_V + r_1 \bullet_\pi w_H)
\]
which may be written without coordinates as:
\begin{equation}\label{eq:derivative-of-scalar-multiplication}
r\bullet^T_{\pi}w -_p pr\bullet_{T(\pi)} w = \hat{p}r\bullet_{T(\pi)}\lambda p w -_p T(\xi) 0m
\end{equation}
where $m = \pi p w$ and $\hat{p}$ is from \ref{def:differential-object}.
  \end{itemize}

We complete this section by describing a version of the $C^\infty$
functor for tangent categories which associates to each manifold its algebra of smooth functions.
\begin{remark}
  For every object $M$ of tangent category with a unit object $R$ there is a ring $C^\infty(M, R)$ defined to be $\mathbb{X}(M,R)$ with pointwise multiplication and addition. 
  This determines a functor $C^\infty: \mathbb{X} \to \mathsf{R-Alg}$.
  In addition the \emph{derivations} (the $R$-module morphisms $d: C^\infty(M) \to C^\infty(M)$ satisfying $d(f\cdot g) = df \cdot g + f \cdot dg$) form a Lie algebra with bracket given by $[d,d'] = dd' - d'd$.
\end{remark}

% subsection units (end)

\subsection{Curve objects} % (fold)
\label{sub:curve_objects}

In this section we describe the axiomatic system we use to perform integration in a tangent category.
The approach to integration we choose to generalise involves finding integral curves (solutions) of vector fields.
As such we follow the presentation in section 5 of \cite{MR3684725} but make some modifications concerning the solutions of linear vector fields.
% First recall that a vector field on a manifold $M$ assigns to every point $m\in M$ a tangent vector at $m$.
% This assignment is assumed to smoothly depend on $m$ and so a vector field is equivalently characterised as a smooth section $X: M \rightarrowtail T(M)$ of the tangent bundle $p:T(M) \twoheadrightarrow M$.
% Intuitively we think of a solution to a vector field as the locus traced out when we place an object at some initial location $x_0$ on the manifold and follow it as it moves in the direction specified by the vector field.
The following definition is 5.15 in \cite{MR3684725}.

\begin{definition}[Dynamical system]
	A \emph{dynamical system $(x_0, X)$ on $M$} consists of an initial condition $x_0: 1 \rightarrow M$ and a section $X:M \rightarrow T(M)$ of the tangent bundle $p:T(M) \rightarrow M$.
\end{definition}

\begin{definition}[Morphism of dynamical systems]
	If $\mathbb{X} = (x_0, X)$ and $\mathbb{Y} = (y_0, Y)$ are dynamical systems on $M$ and $N$ respectively then a \emph{morphism $f: \mathbb{X} \Rightarrow \mathbb{Y}$ of dynamical systems} is an arrow $f:M \rightarrow N$ making
	\begin{equation*}
	\begin{tikzcd}
		1 \rar{x_0} \drar[swap]{y_0} & M \dar{f} \rar{X} & T(M) \dar{T(f)}\\
		{} & N \rar{Y} & T(N)
	\end{tikzcd}
	\end{equation*}
	commute.
\end{definition}

Recall from 2.4 (iii) in \cite{MR3792842} that not every differential bundle in the category of smooth manifolds is a vector bundle.
The key additional property that we require in this section is the following `local triviality' condition where we have in mind the case $R = \mathbb{R}$ in the category of smooth manifolds.

\begin{definition}\label{def:locally-trivial-diff-bundle}
	A differential bundle $q:E \rightarrow M$ is \emph{locally trivial with respect to $R$} iff there exists $n\in \mathbb{N}$ such that for all $f:R \rightarrow M$ the following square 
	\begin{equation*}
	\begin{tikzcd}
		R\times R^n \rar{} \dar{\pi_0} & E \dar{q}\\
		R \rar{f} & M
	\end{tikzcd}
	\end{equation*}
	is a pullback.
\end{definition}

The following is definition 5.18 in \cite{MR3684725}.

\begin{definition}\label{def:linear-vector-field}
	If $q: E \rightarrow M$ is a differential bundle then a \emph{linear vector field $(X^E, X^M)$} is a linear morphism of differential bundles
	\begin{equation*}
	\begin{tikzcd}
		E \rar{X^E} \dar{q} & T(E) \dar{T(q)}\\
		M \rar{X^M} & T(M)
	\end{tikzcd}
	\end{equation*}
	such that $X^E$ and $X^M$ are vector fields.
	A dynamical system $(x_0, X^E)$ is \emph{linear over another dynamical system $(m_0, X^M)$} iff $(X^E, X^M)$ is a linear vector field and $q(x_0) = m_0$.
\end{definition}

Now we present our modification of definition 5.19 in \cite{MR3684725}.

\begin{definition}[Complete curve object]\label{def:complete-curve-object}
	A \emph{complete curve object $\mathbb{R}$} in a tangent category $\mathbb{X}$ is a dynamical system 
	\begin{equation*}
	\mathbb{R} = (0_R:1 \rightarrow R, \partial: R \rightarrow T(R))
	\end{equation*}
	such that:-
	\begin{itemize}
		\item there is another point $1_R:1 \rightarrow R$
		\item if $f, g: \mathbb{R} \Rightarrow \mathbb{X}$ are morphisms of dynamical systems then $f = g$
		\item let $q:E \rightarrow M$ be a locally trivial differential bundle with respect to $R$.
		If $(x_0, X^E)$ is linear over $(q(x_0), X^M)$ and $(q(x_0), X^M)$ has a complete solution then $(x_0, X^E)$ has a complete solution.
	\end{itemize}
\end{definition}

\begin{remark}
	The requirement in \ref{def:complete-curve-object} that $R$ is bipointed is not actually used in this paper.
	However we include it here because we look forward to developing Lie's second and third theorems in a tangent category where it will be required to define the target map of the groupoid integrating an involution algebroid.
\end{remark}

We now show that the manifold $\mathbb{R}$ is a complete curve object in the tangent category of smooth manifolds.
So for the rest of this section all our objects are smooth manifolds and all our arrows are smooth maps.

\begin{example}[Unit dynamical system]
	If $S$ is an interval in $\mathbb{R}$ containing $0$ then we denote by $\partial:S \rightarrow T(S)$ the vector field $\partial:x \mapsto (x, 1)$.
	The \emph{unit dynamical system $\mathbb{U}S$} is the dynamical system $(0, \partial)$.
\end{example}

\begin{definition}[Local solutions]
	If $(x_0, X)$ is a dynamical system then a \emph{local solution $\gamma$ to $(x_0, X)$} is a morphism $\gamma:\mathbb{U}S \Rightarrow \mathbb{X}$ of dynamical systems for some interval $S$ containing $0$.
\end{definition}

The following lemma corresponds to the key classical result concerning the existence and uniqueness of solutions to differential equations.
We state it without proof.

\begin{lemma}[Maximal solution]
	If $(x_0, X)$ is a dynamical system on $M$ then there exists an open interval $(a, b)$ of $\mathbb{R}$ containing $0$ and an arrow $\gamma:(a, b) \rightarrow M$ such that:-
	\begin{itemize}
		\item $\gamma$ is a solution to $(x_0, X)$
		\item if $S$ is an interval in $\mathbb{R}$ strictly containing $(a, b)$ then there is no solution $S \rightarrow M$ to $(x_0, X)$
		\item if $\gamma'$ is another solution to $(x_0, X)$ with domain $(a, b)$ then $\gamma = \gamma'$.
	\end{itemize}
\end{lemma}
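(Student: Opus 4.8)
The plan is to recognise this as the classical existence–uniqueness theorem for integral curves of a smooth vector field and to reduce it to the Picard--Lindel\"of theorem via local coordinates. First I would unwind the definitions: a local solution $\gamma:\mathbb{U}S \Rightarrow (x_0, X)$ is a smooth map $\gamma:S \rightarrow M$ satisfying $\gamma(0) = x_0$ together with $T(\gamma)\circ \partial = X\circ \gamma$, and since $\partial:s \mapsto (s,1)$ this last equation reads, in any chart, as the autonomous ordinary differential equation $\gamma'(s) = X(\gamma(s))$. As $M$ is a smooth manifold and $X$ a smooth section, in a chart around any point $X$ is a smooth (hence locally Lipschitz) map $\mathbb{R}^n \rightarrow \mathbb{R}^n$, so Picard--Lindel\"of yields a solution on some interval $(-\varepsilon, \varepsilon)$ and guarantees it is locally unique.

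Second I would upgrade local uniqueness to uniqueness on a shared connected domain. Given two solutions $\gamma, \gamma'$ defined on the same interval $S$, consider the set $A = \{s\in S : \gamma(s) = \gamma'(s)\}$. It contains $0$, it is closed by continuity, and it is open by the local uniqueness just established (near any agreement point both curves solve the same initial value problem). Since $S$ is connected, $A = S$, which establishes the third bullet and, more generally, that any two solutions agree on the overlap of their domains.

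Third I would build the maximal solution by gluing. Let $(a,b)$ be the union of the domains of all solutions defined on open intervals containing $0$. Uniqueness on overlaps means these solutions patch together into a single well-defined map $\gamma:(a,b) \rightarrow M$, which is smooth because it is locally a smooth solution and which satisfies the integral curve equation throughout. For the middle bullet, suppose a solution existed on an interval $S$ strictly containing $(a,b)$; then $S$ contains an open interval about $0$ properly larger than $(a,b)$, contradicting the definition of $(a,b)$ as the union of all such domains. A subtle point worth recording is that $(a,b)$ is genuinely open: were the maximal domain of the form $(a,b]$, one could restart Picard--Lindel\"of at $\gamma(b)$ and splice (using uniqueness) to extend past $b$, a contradiction.

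The only real obstacle here is notational rather than mathematical, namely translating the tangent-category definition of a morphism of dynamical systems into the familiar coordinate equation $\gamma' = X\circ \gamma$ so that the classical theorem applies verbatim. Once this dictionary is in place the argument is the standard open--closed connectedness proof together with a direct union construction of the maximal interval, and no feature of the abstract tangent structure beyond the smooth-manifold model is required; alternatively one could simply cite the fundamental theorem on flows from any standard text on smooth manifolds.
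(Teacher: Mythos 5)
Your proof is correct, but there is essentially nothing in the paper to compare it against: the paper explicitly states this lemma \emph{without proof}, introducing it as ``the key classical result concerning the existence and uniqueness of solutions to differential equations'' (this whole subsection works concretely in the tangent category of smooth manifolds, so the classical theorem applies verbatim). Your argument---unwinding the morphism-of-dynamical-systems condition $T(\gamma)\circ\partial = X\circ\gamma$ into the coordinate ODE $\gamma'(s) = X(\gamma(s))$, invoking Picard--Lindel\"of for local existence and local uniqueness, the open--closed connectedness argument for uniqueness on a common domain, and the union-of-domains construction for maximality---is the standard textbook proof (the fundamental theorem on flows), and it is sound; this is precisely the dictionary the paper is implicitly relying on when it cites the classical result. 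Two small points are worth recording. First, the closedness of the agreement set $\{s \in S : \gamma(s) = \gamma'(s)\}$ uses that $M$ is Hausdorff; this holds for smooth manifolds as used here, but it is the one place where a hypothesis beyond local theory enters. Second, your main argument for the middle bullet (``$S$ contains an open interval about $0$ properly larger than $(a,b)$'') fails as literally stated when $S$ is half-open, e.g.\ $S = (a,b]$, whose interior is just $(a,b)$; however, the ``subtle point'' you record---restarting Picard--Lindel\"of at $\gamma(b)$ and splicing by uniqueness to extend past $b$---is exactly what repairs this case, so the proof taken as a whole is complete.
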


\begin{definition}[Completeness]
	A \emph{complete solution to a dynamical system $(x_0, X)$} is a solution with domain $\mathbb{R}$.
	A \emph{complete vector field $X$} is a vector field such that for all initial conditions $x_0$ there exists a complete solution to $(x_0, X)$.
\end{definition}

\begin{lemma}[Linear differential equations]
	If $(x_0,X)$ is a dynamical system on $\mathbb{R}^n$ for some $n\in \mathbb{N}$ such that $\pi_1 X: \mathbb{R}^n \rightarrow \mathbb{R}^n$ is a linear then $X$ is complete.
	\begin{proof}
		This is a classical result: use $\gamma(t) = (\exp(t\pi_1 X)){x_0}$ where $\exp$ denotes the matrix exponential.
	\end{proof}
\end{lemma}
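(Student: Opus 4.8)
The plan is to unwind the categorical notion of a complete solution into a classical initial value problem and then write down its solution via the matrix exponential. First I would use the canonical identification $T(\mathbb{R}^n)\cong\mathbb{R}^n\times\mathbb{R}^n$, under which the section $X$ of $p$ takes the form $x\mapsto(x,\pi_1 X(x))$; the hypothesis that $\pi_1 X$ is linear means $\pi_1 X(x)=Ax$ for a fixed $n\times n$ matrix $A$. Unravelling the definition of a morphism of dynamical systems $\gamma:\mathbb{U}\mathbb{R}\Rightarrow(x_0,X)$, a complete solution is a smooth map $\gamma:\mathbb{R}\to\mathbb{R}^n$ with $\gamma(0)=x_0$ and $T(\gamma)\partial=X\gamma$; comparing second components this says exactly $\gamma'(t)=A\gamma(t)$ with $\gamma(0)=x_0$.

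Next I would exhibit $\gamma(t)=\exp(tA)\,x_0$, where $\exp$ denotes the matrix exponential given by its power series. The initial condition is immediate from $\exp(0)=I$, and differentiating the series term by term gives $\gamma'(t)=A\exp(tA)\,x_0=A\gamma(t)$, so $\gamma$ solves the equation. Since the series for $\exp(tA)$ converges (locally uniformly) for every real $t$, the map $\gamma$ is defined and smooth on all of $\mathbb{R}$; as $x_0$ was arbitrary this shows that $X$ is complete.

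The only substantive point---and the closest thing to an obstacle---is global existence: for a general nonlinear vector field the maximal solution may only be defined on a bounded interval, so completeness genuinely uses linearity. Here it is delivered for free by the everywhere-convergence of the exponential series, which is why the statement is recorded as a classical fact and its verification reduces to the standard theory of constant-coefficient linear systems.
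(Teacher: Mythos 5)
Your proposal is correct and matches the paper's own proof, which simply cites the classical fact and exhibits the same solution $\gamma(t) = \exp(t\pi_1 X)\,x_0$ via the matrix exponential. Your additional unwinding of the tangent-categorical definitions into the classical initial value problem, and the observation that global existence follows from everywhere-convergence of the exponential series, fills in exactly the details the paper leaves implicit.
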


We split the proof of the lifting of complete solutions into two steps.
The first concerns a vector field on a Euclidean space.

\begin{lemma}\label{lem:sub-linear-vector-field}
	If $X: \mathbb{R}\times \mathbb{R}^n \rightarrow T(\mathbb{R}\times \mathbb{R}^n)$ is the vector field defined by
	\begin{equation*}
	X:(x, \vec{v}) \mapsto (x, \vec{v}, 1, A_x(\vec{v}))
	\end{equation*}
	where $A_x(\vec{v})$ is smooth in $x$ and linear in $\vec{v}$ then $X$ is complete.
	\begin{proof}
		Let $\gamma:(a, b) \rightarrow \mathbb{R}\times \mathbb{R}^n$ be the maximal solution to $(x_0, X)$ for some initial condition $x_0\in \mathbb{R}\times \mathbb{R}^n$.
		Suppose (for the purpose of obtaining a contradiction) that $b<\infty$.
		Let $M = sup_{x\in [a,b]}\|A_x\|$ where $\|-\|$ denotes the operator norm.
		Then the vector field
		\begin{equation*}
		\hat{X}:(x, \vec{v}) \mapsto (x, \vec{v}, 1, M\vec{v})
		\end{equation*}
		is linear and so has a complete solution $\hat{\gamma}:\mathbb{R} \rightarrow \mathbb{R}\times \mathbb{R}^n$.
		Now by construction $|\gamma(y)|\leq |\hat{\gamma}(y)|$ for all $y\in [0, b)$.
		Therefore $\gamma[0, b)$ is bounded because $\hat{\gamma}[0,b]$ is bounded.
		But now we can extend $\gamma$ smoothly to $[0, b]$: 
		\begin{equation*}
		\gamma(b):= \lim_{n \rightarrow \infty} \gamma\left(b-\frac1n\right)
		\end{equation*}
		which exists because $\gamma[0, b)$ is bounded.
		This extension of $\gamma$ is still a solution to $(x_0, X)$ because the derivative of $\gamma$ is smooth.
		However this means that $(a, b)$ wasn't the domain for the maximal solution to $(x_0, X)$ and so $b=\infty$.
		Similarly $a = -\infty$.
	\end{proof}
\end{lemma}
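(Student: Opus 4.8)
The plan is to reduce the statement to a comparison against a genuinely \emph{autonomous} linear vector field, for which completeness is already guaranteed by the Linear differential equations lemma. The first observation is that the dynamics decouple in a useful way: the $x$-coordinate of any integral curve is forced to satisfy $\dot x = 1$, so it is an affine function of time and never blows up, while the remaining coordinates obey the non-autonomous \emph{linear} equation $\dot{\vec v} = A_{x}(\vec v)$. Thus the only possible source of non-completeness is a finite-time blow-up of the $\vec v$-component. I would therefore fix an initial condition $x_0$, invoke the Maximal solution lemma to obtain the maximal solution $\gamma:(a,b)\to \mathbb{R}\times \mathbb{R}^n$, and argue by contradiction that $b=\infty$, the case $a=-\infty$ being symmetric.

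The heart of the argument is an a priori bound obtained from a dominating autonomous system. Supposing $b<\infty$, the $x$-coordinate of $\gamma$ traverses only a bounded interval over $[0,b)$, so its closure is compact; by smoothness (hence continuity) of $x\mapsto A_x$ the operator norm is bounded on that compact range, and I set $M=\sup\|A_x\|$ over it. The field $\hat X:(x,\vec v)\mapsto (x,\vec v,1,M\vec v)$ is linear and hence complete, with solution $\hat\gamma$, and a Gr\"onwall-type estimate — coming from $\frac{d}{dt}|\vec v|^2 = 2\langle \vec v, A_x\vec v\rangle \le 2M|\vec v|^2$ — yields $|\gamma(y)|\le |\hat\gamma(y)|$ for all $y\in[0,b)$ (the $x$-coordinates agree and the $\vec v$-coordinates satisfy the inequality). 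Since $\hat\gamma$ is continuous on the compact interval $[0,b]$, its image there is bounded, and consequently $\gamma[0,b)$ is bounded as well.

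Finally I would parlay this boundedness into the desired contradiction. With $\gamma[0,b)$ bounded, its velocity $X\circ\gamma$ is bounded too, so the limit $\gamma(b):=\lim_{n\to\infty}\gamma(b-\tfrac1n)$ exists; smoothness of $X$ then shows the extended curve still solves $(x_0,X)$ on $[0,b]$. This extension beyond $b$ contradicts the maximality of $(a,b)$, forcing $b=\infty$, and symmetrically $a=-\infty$. Hence every initial condition admits a complete solution, which is exactly what it means for $X$ to be complete.

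The step I expect to be the main obstacle is making the comparison rigorous: the inequality $|\gamma|\le|\hat\gamma|$ is not a pointwise domination of the vector fields but a consequence of the scalar differential inequality for $|\vec v|^2$ combined with Gr\"onwall's lemma, and one must be careful that the estimate uses only the operator-norm bound $\|A_x\|\le M$, which is available precisely because supposing $b<\infty$ confines $x$ to a compact set. The remaining ingredients — solving the constant-coefficient comparison system via the matrix exponential and extending a solution with bounded image and bounded derivative across the endpoint — are routine.
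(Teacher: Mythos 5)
Your proposal is correct and takes essentially the same route as the paper: contradiction via the maximal solution, comparison with the autonomous linear field $\hat X(x,\vec v)=(x,\vec v,1,M\vec v)$, boundedness of $\gamma[0,b)$, and extension of $\gamma$ past $b$. If anything, your version is slightly more careful than the paper's, which defines $M=\sup_{x\in[a,b]}\|A_x\|$ over the possibly unbounded time interval (problematic if $a=-\infty$) and asserts $|\gamma(y)|\le|\hat\gamma(y)|$ ``by construction'', whereas you take the supremum of $\|A_x\|$ over the compact closure of the $x$-range on $[0,b)$ and justify the comparison by the Gr\"onwall estimate $\frac{d}{dt}|\vec v|^2\le 2M|\vec v|^2$.
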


The second step is to use the complete solution of $X^M$ to reduce the general case to a vector field on a Euclidean space of the form described in the statement of \ref{lem:sub-linear-vector-field}.

\begin{proposition}
	Let $q: E \rightarrow M$ be a locally trivial differential bundle with respect to $\mathbb{R}$ and let $X^E$ be a linear vector field over $X^M$.
	If for all $x_0\in E$ the vector field $(q(x_0), X^M)$ has a complete solution then $(x_0, X^E)$ has a complete solution.
	\begin{proof}
		If $f:\mathbb{R} \rightarrow M$ is the solution to $X^M$ then we can restrict $X^E$ to a section $X:\mathbb{R}\times \mathbb{R}^n \rightarrow T(\mathbb{R}\times \mathbb{R}^n)$ by pulling back along $f$:
		\begin{equation*}
		\begin{tikzcd}
			T(\mathbb{R}\times \mathbb{R}^n) \arrow{rrr}{T(p_1)} \arrow{ddd}{T(\pi_0)} & {} & {} & T(E) \arrow{ddd}{T(q)} \\
			{} & \mathbb{R}\times \mathbb{R}^n \dar{\pi_0} \ular[dashed]{X} \rar{p_1} & E \dar{q} \urar{X^E} & {}\\
			{} & \mathbb{R} \rar{f} \dlar{\partial} & M \drar{X^M} & {} \\
			T(\mathbb{R}) \arrow{rrr}{T(f)} & {} & {} & T(M)
		\end{tikzcd}
		\end{equation*}
		where the middle (and outer) squares are pullbacks because $\mathbb{R}$ is contractible.
		Now $X$ is given in coordinates by:
		\begin{equation*}
		X:(x, \vec{v}) \mapsto \left(x, \vec{v}, 1, X^E_{f(x)}(\vec{v})\right)
		\end{equation*}
		and so the result follows from \ref{lem:sub-linear-vector-field}.
	\end{proof}
\end{proposition}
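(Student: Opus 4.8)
The plan is to fix an initial condition $x_0 \in E$, use the hypothesis to obtain a complete solution of the base system, and then pull everything back along that solution so that the problem becomes a question about a vector field on a Euclidean space of exactly the shape already handled by \ref{lem:sub-linear-vector-field}. Concretely, I would set $m_0 = q(x_0)$ and let $f : \mathbb{R} \to M$ be the complete solution to $(m_0, X^M)$ guaranteed by the hypothesis. Because $q$ is locally trivial with respect to $\mathbb{R}$ (\ref{def:locally-trivial-diff-bundle}), pulling $q$ back along $f$ produces a trivial bundle, so I may identify $f^*E$ with $\mathbb{R} \times \mathbb{R}^n$ and write $p_1 : \mathbb{R} \times \mathbb{R}^n \to E$ for the induced bundle map.

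The next step is to transport $X^E$ to a vector field $X$ on $\mathbb{R} \times \mathbb{R}^n$. I would do this via the universal property of the pullback: the base component is $\partial\,\pi_0$ (so that moving in the $\mathbb{R}$-direction follows the solution $f$), while the fibre component is $X^E p_1$. For this to define a genuine section of $T(\mathbb{R} \times \mathbb{R}^n)$ I need the tangent-level square to again be a pullback; this holds because $\mathbb{R}$ is contractible, so $T$ preserves the relevant pullback and the pair $(\partial\pi_0,\, X^E p_1)$ factors uniquely through $T(\mathbb{R} \times \mathbb{R}^n)$. This produces a dynamical system $(x_0, X)$ whose solutions push forward along $p_1$ to solutions of $(x_0, X^E)$.

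It then remains to read off the coordinate form of $X$ and invoke the preceding lemma. Since $f$ solves $X^M$, the base component contributes the constant $1$ in the $T(\mathbb{R})$-direction, and since $X^E$ is a \emph{linear} vector field over $X^M$ (\ref{def:linear-vector-field}), its restriction to each fibre is linear; writing $A_x$ for the resulting fibrewise-linear, $x$-smooth map I obtain
\[
X : (x, \vec{v}) \mapsto (x, \vec{v}, 1, A_x(\vec{v})),
\]
which is precisely the form required by \ref{lem:sub-linear-vector-field}. That lemma then supplies a complete solution $\gamma : \mathbb{R} \to \mathbb{R} \times \mathbb{R}^n$ of $(x_0, X)$, and $p_1 \circ \gamma : \mathbb{R} \to E$ is the desired complete solution of $(x_0, X^E)$.

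The main obstacle I anticipate is the bookkeeping around the tangent functor and linearity, rather than any genuinely hard analysis (the analysis having been isolated into \ref{lem:sub-linear-vector-field}). Specifically, I would need to justify carefully that $T$ preserves the pullback defining $f^*E$ so that $X$ is well defined, and to confirm that linearity of the bundle morphism $X^E$ really does force the fibre component to be linear in $\vec{v}$ with smooth dependence on $x$. Once these two points are pinned down the coordinate expression above is immediate and the lemma closes the argument.
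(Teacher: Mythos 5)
Your proposal is correct and follows essentially the same route as the paper: pull $X^E$ back along the complete base solution $f$ using local triviality (with contractibility of $\mathbb{R}$ ensuring the relevant squares, including at the tangent level, are pullbacks), read off the coordinate form $(x,\vec{v}) \mapsto (x,\vec{v},1,A_x(\vec{v}))$, and invoke \ref{lem:sub-linear-vector-field}. Your write-up is in fact slightly more careful than the paper's, since you make explicit both the universal-property construction of $X$ and the final push-forward of the solution along $p_1$, which the paper leaves implicit.
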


% subsection linear_vector_fields (end)

% subsection curve_objects (end)
% \changelocaltocdepth{2}
% section background_on_tangent_categories (end)

\section{Involution algebroids} % (fold)
\label{sec:involution_algebroids_and_examples}

If $X$ and $Y$ are two smooth vector fields on a smooth manifold $M$ then their Lie bracket can be expressed as an algebra-theoretic commutator (see lemma 4.12 in \cite{MR1930091}) or alternatively as a group theoretic commutator (see section I.9 of \cite{MR2244115}).
In the same way the Lie bracket on a Lie algebra (or integrable Lie algebroid) may be obtained as a commutator of infinitesimally small elements of the Lie group (or groupoid) that integrates the algebra.
For the definition of an involution algebroid we instead axiomatise a structure that corresponds to operation similar to conjugation on infinitesimal elements of a groupoid.
In this way we replace the Lie bracket of a Lie algebroid (which acts on sections) with an involution which acts on a certain prolongation of the algebroid.
Under this replacement the Jacobi identity of classical Lie theory is replaced by an identity that is similar to the Yang-Baxter equation.

In section \ref{sub:definition_of_involution_algebroid} we define involution algebroids in the abstract setting of a tangent category.
Then we move on to giving classes of examples of involution algebroids.
In section \ref{subsec:the_involution_algebroid_of_a_groupoid} we show how to obtain a involution algebroid as an linear approximation of a groupoid in a tangent category.

\subsection{Anchored bundles and prolongations} % (fold)
\label{sub:anchored_bundles_and_prolongations}

Every Lie algebroid $\pi:A \rightarrow M$ is obtained by placing additional structure on a smooth vector bundle.
One part of this additional structure is a vector bundle map $\anc:A \Rightarrow T(M)$ called the \emph{anchor}.
The anchor map specifies how $A$ behaves like a generalised tangent bundle: an element $a\in A$ is related to the element $\anc a$ in an analogous way to how an arrow $g$ in a groupoid $s, t:G \rightrightarrows M$ is related to the pair $(sg, tg)$.
Since a differential bundle is the appropriate analogue of a smooth vector bundle in a tangent category we now define anchored bundles in terms of differential bundles.

\begin{definition}[Anchored bundle]
	An \emph{anchored bundle} is a differential bundle $\pi:A \rightarrow M$ equipped with a linear bundle morphism $\anc: A \Rightarrow TM$.
\end{definition}

\begin{definition}[Morphism of anchored bundles]
	If $A$ and $B$ are anchored bundles with anchors $\anc_A$ and $\anc_B$ respectively then \emph{a morphism $f:A \rightarrow B$ of anchored bundles} is a morphism $(f_0, f_1): A \Rightarrow B$ of the underlying differential bundles preserving the anchor: $T(f_0)\anc_A = \anc_B f_1$.
\end{definition}

For the definition of involution algebroid we replace the Lie bracket of a Lie algebroid (which acts on sections) with an involution which acts on a certain prolongation of the algebroid.
We now describe various differential bundles that may be thought of as prolongations of an anchored bundle.
We learnt of the following construction in section 3 of \cite{MR1861135}.

\begin{definition}[Total space of prolongation]
	If $A$ is an anchored bundle the \emph{total space of a prolongation of $A$} is the pullback
	\begin{equation*}
	\begin{tikzcd}
		\prolong \rar{\pi_1} \dar{\pi_0} & T(A) \dar{T(\pi)}\\
		A \rar{\anc}  & A
	\end{tikzcd}
	\end{equation*}
\end{definition}

\begin{remark}\label{rem:picture-of-prolongation}
	A heuristic picture of an element $(v, w)\in\prolong$ of the prolongation is
	\begin{equation*}
	\left.\begin{tikzcd}
		{} \arrow[dashed]{rr} & {} & {}\\
		{} \arrow[dashed]{rr}& {} & {}\\
		m \arrow{uu}{v} \arrow[dashed]{rr} & {} & {}
	\end{tikzcd}
	\right\} w
	\end{equation*}
	where all the dashed arrows together represent a single element $w\in T(A)$, the solid arrow represents an element $v\in A$ and $m = \pi v = \pi p w$.
	Therefore the object $\prolong$ plays an analogous role to the role played by the object of composable pairs in a groupoid.
\end{remark}

Next we describe two differential bundle structures on $\prolong$.

\begin{remark}[Bundle with $p\pi_1$]
	If $\pi:A \rightarrow M$ is an anchored bundle then the pullback
	\begin{equation*}
	\begin{tikzcd}[column sep = 0.3cm]
		(\prolong, p\pi_1) \rar \dar & (T(A), p) \dar[swap]{(T(\pi), \pi)}\\
		(A, \pi) \rar{(\anc, id)} & (T(M), p)
	\end{tikzcd}
	\end{equation*}
	in the category of differential bundles (see \ref{lem:pullbacks-in-caty-of-diff-bundles}) has lift $\lambda\times l$ and zero section $(\xi\pi, 0):A \rightarrow \prolong$.
\end{remark}

\begin{remark}[Bundle with $\pi_0$]
	If $\pi:A \rightarrow M$ is an anchored bundle then the pullback
	\begin{equation*}
	\begin{tikzcd}[column sep = 0.3cm]
		(\prolong, \pi_0) \rar \dar & (T(A), T(\pi)) \dar{(T(\pi), id)}\\
		(A, id) \rar{(\anc, \anc)} & (T(M), id)
	\end{tikzcd}
	\end{equation*}
	in the category of differential bundles (see \ref{lem:pullbacks-in-caty-of-diff-bundles}) has lift $0\times cT(\lambda)$ and zero section $(id, T(\xi)\anc):A \rightarrow \prolong$.
\end{remark}

% subsection anchored_bundles_and_prolongations (end)

\subsection{Definition of involution algebroids} % (fold)
\label{sub:definition_of_involution_algebroid}

The Lie bracket on the elements of a Lie algebra specifies the multiplication of the Lie group integrating the Lie algebra.
Roughly speaking it does this by specifying the commutator of two elements of the Lie group that are infinitesimally close to an identity element.
A similar intuition can be applied to the Lie bracket on the sections of a Lie algebroid in the case that there exists a Lie groupoid integrating the Lie algebroid.

An involution algebroid will encode the multiplication of a groupoid by specifying which composable pairs compose to the same element.
Now we present the axioms defining an involution algebroid and in the next section we demonstrate how such a structure arises on the elements of a groupoid that are infinitesimally close to an identity element.

\begin{definition}[Involution algebroid]\label{def:inv-algd}
	An \emph{involution algebroid} is an anchored bundle $\pi:A \rightarrow M$ equipped with an arrow $\alpha:\prolong \rightarrow T(A)$ such that $(\alpha, id): (\prolong, \pi_0) \Rightarrow (T(A), p)$ and $(\alpha, \anc): (\prolong, p\pi_1) \Rightarrow (T(A), T(\pi))$ are linear bundle morphisms and:-
	\begin{align*}
		T(\anc)\alpha &= cT(\anc)\pi_1 \tag{inv. algd. target}\\
		\alpha(\xi\pi, \lambda) &= \lambda\tag{inv. algd. unit} \\
		\alpha(p\pi_1, \alpha) &= \pi_1 \tag{inv. algd. inv.}\\
		% (T\alpha)(\alpha(\pi_0, \pi_1), c\pi_2) &= c(T\alpha)(\alpha(\pi_0, pc\pi_2), c (T\alpha)(\pi_1, \pi_2)) \tag{inv. algd. flip}
		(T\alpha)(\alpha(\pi_0, \pi_1), \pi_2) &= c(T\alpha)(\alpha(\pi_0, p\pi_2), c (T\alpha)(\pi_1, c\pi_2)) \tag{inv. algd. flip}
	\end{align*}
	where $\anc \pi_0 = T(\pi) \pi_1$ and $cT^2(\pi)\pi_2 = T(\anc)\pi_1$.
\end{definition}

A heuristic picture of the action of $(p\pi_1, \alpha)$ on an element $(v, w)\in\prolong$ of the prolongation is:
	\begin{equation*}
	\left.\begin{tikzcd}
		{} \arrow[dashed]{rr} & {} & {}\\
		{} \arrow[dashed]{rr}& {} & {}\\
		m \arrow{uu}{v} \arrow[dashed]{rr} & {} & {}
	\end{tikzcd}
	\right\} w \quad\mapsto \quad
	\begin{tikzcd}[column sep = -0.4cm]
	& {\alpha(v, w)} & \\[-0.7cm]
	{} & \overbrace{\hspace{2.1cm}} & {}\\
	{} & {} & {}\\
	m \arrow[dashed]{uu} \arrow{rr}[swap]{pw} & {} \arrow[dashed]{uu} & {} \arrow[dashed]{uu}
	\end{tikzcd}
	\end{equation*}
	where on each side the three dashed arrows each represent a single element of $T(A)$, the solid arrows represent elements of $A$ and $m = \pi v = \pi p w$.

\begin{remark}[Yang-Baxter]\label{rem:yang-baxter}
	We think of (inv. algd. inv.) as asserting  that the endomap $\sigma = (p\pi_1, \alpha)$ on $\prolong$ is an involution.
	When (inv. algd. flip) is rephrased in terms of $\sigma$ one obtains a Yang-Baxter style identity:
	\begin{equation*}
	(\sigma\times c)(id\times T(\sigma))(\sigma\times c) = (id \times T(\sigma))(\sigma \times c)(id\times T(\sigma))
	\end{equation*}
	which we use in section \ref{subsub:the_involution_algebroid_of_a_lie_algebroid} when proving classical Lie algebroids are examples of involution algebroids.
\end{remark}

\begin{remark}[Inv. algd. source and projection]
	Applying $T(p)$ to both sides of (inv. algd. target) we obtain
	\begin{equation*}
	T(\pi)\alpha = T(p)cT(\anc)\pi_1 = p T(\anc)\pi_1 = \anc p \pi_1 \tag{inv. algd. source}
	\end{equation*}
	which describes the source of the element $\alpha(v, w)$.
	Immediately from the fact that $(\alpha, id)$ is a bundle homomorphism we have $p\alpha = \pi_0$ which we call (inv. algd. 0).
\end{remark}

\begin{example}[Tangent bundles]
	If $M$ is an object in a tangent category then $p:T(M) \rightarrow M$ is an involution algebroid with anchor $\anc = id$ and $\alpha:T(M)\ts{id}{T(p)}T^2(M) \rightarrow T^2(M)$ given by $c\pi_1$.
\end{example}

\begin{example}[Lie algebroids]
	In \ref{subsub:the_involution_algebroid_of_a_lie_algebroid} we show directly how Lie algebroids are examples of involution algebroids.
	Now we describe another way of obtaining an involution algebroid from a Lie algebroid.
	In \ref{subsec:the_involution_algebroid_of_a_groupoid} we show how to produce an involution algebroid from a groupoid in a tangent category.
	Now the constructions in \ref{subsec:the_involution_algebroid_of_a_groupoid} only rely on infinitesimal data and so can be carried out without modification to produce an involution algebroid starting from any local Lie groupoid (defined in for instance section 2.1 of \cite{on-local-integration}).
	However the category of Lie algebroids is equivalent to the category of local Lie groupoids (theorem 1.1 of \cite{on-local-integration}).
	Therefore to every classical Lie algebroid we can associate an involution algebroid.
\end{example}

By contrast with morphisms of Lie algebroids we may define morphisms of involution algebroids simply as the structure preserving maps.
The following definition is not used much in the remainder of this paper as we are mainly concerned with constructions on a single fixed involution algebroid.
It will be important in future work when we compare the category of involution algebroids internal to the category of smooth manifolds to the category of Lie algebroids.

\begin{definition}[Morphism of involution algebroids]\label{def:morphism-inv-algd}
	If $A$ and $B$ are involution algebroids with involutions $\alpha_A$ and $\alpha_B$ respectively then \emph{a morphism $f:A \rightarrow B$ of involution algebroids} is a morphism $(f_1, f_0):A \rightarrow B$ of the underlying anchored bundles such that:
	\begin{equation*}
	\begin{tikzcd}
		A\ts{\anc_A}{T(\pi_A)}T(A) \rar{f_1\times T(f_1)} \dar{\alpha_A} & B\ts{\anc_B}{T(\pi_B)}T(B) \dar{\alpha_B}\\
		T(A) \rar{f_1} & T(B)
	\end{tikzcd}
	\end{equation*}
	commutes.
\end{definition}

\subsection{The involution algebroid of a groupoid} % (fold)
\label{subsec:the_involution_algebroid_of_a_groupoid}

In this section we start with a groupoid $G$ in a tangent category and produce an involution algebroid that is an infinitesimal approximation to $G$.
The process begins in an analogous fashion to the process described in section 3.5 of \cite{MR2157566} but in the end we construct an involution on the prolongation rather than a bracket on the sections.
So we start by constructing the underlying anchored bundle which in this case is the bundle of source constant tangent vectors to $G$ that are based at an identity element.

\begin{notation}
	In this section $G$ is a groupoid over $M$ in a tangent category $\mathbb{X}$.
	We denote by $e$, $s$, $t$ and $(-)^{-1}$ the identity, source, target and inverse maps of $G$ respectively.
\end{notation}

\begin{definition}[Anchored bundle of groupoid]\label{def:anc_bun_of_groupoid}
	The \emph{anchored bundle $\pi:A \rightarrow M$ associated to $G$} has underlying differential bundle given by the pullback 
	\begin{equation*}
	\begin{tikzcd}[column sep=1.5cm]
		(A, \pi) \rar{(\iota, e)} \dar{(\pi, id)} & (TG, p) \dar{((Ts, p),(s, id))}\\
		(M, id) \rar{((0, e), (id, e))} & (TM\times G, p\times id)
	\end{tikzcd}
	\end{equation*}
	in the category of differential bundles in $\mathbb{X}$ as described in \ref{lem:pullbacks-in-caty-of-diff-bundles}.
	The anchor is given by $T(t)\iota:A \rightarrow TM$.
\end{definition}

\begin{notation}\label{not:embed-into-TG}
	In the remainder of this section we fix $v\in A$, $w\in T(A)$ and $x\in T^2(A)$ such that $\anc v = T(\pi)w$ and $cT(\anc)w = T^2(\pi)x$.
	For notational convenience we regard:-
	\begin{itemize}
		\item $v\in T(G)$ such that $pv = em$ and $T(s)v = 0m$
		\item $w\in T^2(G)$ such that $T(p)w = T(e)T(t)v$ and $T^2(s) w = T(0)T(t)v$
	\end{itemize}
	where $m = \pi v = \pi pw = \pi pp x$.
	In \ref{def:inv-algd-of-grpd}, \ref{lem:gpd-alpha-well-typed} and \ref{lem:gpd-alpha-first-three} we use $\circ$ as infix notation for the composition $T^2(\mu):T^2(G)\ts{T^2(t)}{T^2(s)}T^2(G) \rightarrow T^2(G)$ and in \ref{lem:gpd-alpha-flip} we use $\circ$ as infix notation for the composition $T^3(\mu)$.
	In this notation the lift of the differential bundle $(A, \pi)$ of \ref{def:anc_bun_of_groupoid} acts on elements $v\in T(G)$ as $v\mapsto lv$.
	(Here we have used \ref{lem:pullbacks-in-caty-of-diff-bundles} to see $\lambda = 0\times l$.)
	Indeed $lv$ factors through $T(A)$ because $T(p)lv = 0pv = 0em = T(e)0m$.
\end{notation}

Now we show how to obtain an involution algebroid from a groupoid in a tangent category.
The flip map we use is given by:
\begin{equation}\label{eq:alpha-from-gpd}
	\alpha(v, w) = cw\circ 0v\circ(c0pw)^{-1}
\end{equation}
which we think of as the following extension:
\begin{equation*}
\begin{tikzcd}[column sep = 1cm, row sep = 1cm]
	{} \rar{cw} & {}\\
	m\uar{0v} \rar{c0pw} & {} \uar[swap, dashed]{\alpha(v, w)}
\end{tikzcd}
\end{equation*}
which corresponds to the intuition described after \ref{def:inv-algd}.
Before we verify the involution algebroid axioms we must first show that the composite in \eqref{eq:alpha-from-gpd} is well-typed.

\begin{lemma}[Composition well-typed]\label{def:inv-algd-of-grpd}
	The arrow $\alpha$ in \eqref{eq:alpha-from-gpd} is well-typed with respect to the composition in $T(G)$.
	\begin{proof}
		The composition makes sense because the equalities
		\begin{align*}
			T^2(s)0v &= 0T(s)v\\
			&= 00m\\
			&= 00tpv\\
			&= c0pT(0)T(t)v\\
			&= c0pT^2(s)w\\
			&= T^2(s)c0pw\\
			&= T^2(t)(c0pw)^{-1}
		\end{align*}
		and 
		\begin{align*}
			T^2(s) cw &= cT^2(s)w\\
			&=cT(0)T(t)v\\
			&= 0T(t)v = T^2(t) 0v
		\end{align*}
		hold.
	\end{proof}
\end{lemma}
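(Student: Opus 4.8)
The plan is to spell out what ``well-typed'' means and then reduce it to two composability equations that can be checked by bookkeeping with the tangent-category naturalities. Since $\circ$ denotes $T^2(\mu)$, the composite $\alpha(v,w)=cw\circ 0v\circ(c0pw)^{-1}$ lives in the prolonged groupoid $T^2(G)$, whose source and target are $T^2(s)$ and $T^2(t)$ and whose inverse is $T^2((-)^{-1})$. Reading the triple composite in the usual order, each pair of adjacent factors must match up, so it suffices to establish
\begin{align*}
  T^2(s)(0v) &= T^2(t)\bigl((c0pw)^{-1}\bigr),\\
  T^2(s)(cw) &= T^2(t)(0v).
\end{align*}
Everything then comes down to rewriting each side using (i) the standing hypotheses on $v$ and $w$ from \ref{not:embed-into-TG}, namely $T(s)v=0m$, $T^2(s)w=T(0)T(t)v$ and $pv=em$ (so that $tpv=m$ because $te=\mathrm{id}$); (ii) naturality of $p$, $0$ and $c$ against the groupoid maps $s$ and $t$ at the appropriate tangent level; and (iii) the groupoid law $s\circ(-)^{-1}=t$ transported through $T^2$.

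For the second equation I would push $T^2(s)$ through $cw$ by naturality of $c$ to get $c\,T^2(s)w=c\,T(0)T(t)v$, apply the flip--zero coherence $c\,T(0)=0T$ to reach $0\,T(t)v$, and then recognise this as $T^2(t)(0v)$ by naturality of $0$; this chain is short. The first equation is the substantive one: starting from $T^2(s)(0v)=0\,T(s)v=00m=00\,tpv$, I must reach the inverse's target $T^2(t)\bigl((c0pw)^{-1}\bigr)=T^2(s)(c0pw)$, and commuting $T^2(s)$ inward turns the latter into $c\,0p\,T^2(s)w=c\,0p\,T(0)T(t)v=c\cdot 00\,tpv$. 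The two ends therefore agree precisely because the double zero is fixed by the flip, $c\cdot 00=00$, a coherence I would derive from $c\,T(0)=0T$ (which is exactly the tangent-category axiom that $(c,\mathrm{id}):T(p)\Rightarrow p$ is an additive bundle morphism, hence preserves zero sections) together with naturality of $0$.

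The main obstacle is exactly this insertion of a flip in the middle of the first chain: the computation only closes once one knows $c\cdot 00=00$ and $c\,T(0)=0T$, so the real content is isolating these two ``flip--zero'' identities and then keeping the $T$-versus-$T^2$ levels straight as $p$, $0$ and $c$ are commuted past $s$, $t$ and each other. Once those coherences are in hand both equations are forced; I expect the second to be essentially immediate and the first to carry all the difficulty.
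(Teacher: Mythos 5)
Your proposal is correct and follows essentially the same route as the paper: it reduces well-typedness to the two matching conditions $T^2(s)(0v)=T^2(t)\bigl((c0pw)^{-1}\bigr)$ and $T^2(s)(cw)=T^2(t)(0v)$, and verifies them with the same ingredients (the hypotheses $T(s)v=0m$, $T^2(s)w=T(0)T(t)v$, $pv=em$; naturality of $p$, $0$, $c$; the groupoid law for inverses; and the flip--zero coherences $cT(0)=0$ and $c\,00=00$). The only cosmetic difference is that you meet the first chain in the middle at $00\,tpv$ rather than writing it as one left-to-right string of equalities, which is exactly how the paper's computation closes as well.
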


The rest of this section is devoted to justifying definition \eqref{eq:alpha-from-gpd}.
We show that the $\alpha$ defined in \eqref{eq:alpha-from-gpd} is well-typed, a linear differential bundle morphism and satisfies the involution algebroid axioms of \ref{def:inv-algd}.

\begin{lemma}\label{lem:gpd-alpha-well-typed}
	The $\alpha$ in \eqref{eq:alpha-from-gpd} has domain $T(A)$.
	\begin{proof}
		We check that $T(p)\alpha$ is an infinitesimal variation of identity arrows:
		\begin{align*}
			T(p)\alpha(v, w) &= T(p)(cw\circ 0v\circ (c0pw)^{-1})\\
			&= T(p)cw\circ T(p)0v \circ (T(p)c0pw)^{-1}\\
			&= pw \circ 0pv \circ (pw)^{-1}\\
			&= pw \circ T(e)0m \circ (pw)^{-1}\tag{definition of $v$}\\
			&= T(e)\anc pw
		\end{align*}
		as required.
	\end{proof}
\end{lemma}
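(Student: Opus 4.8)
The plan is to show that the arrow $\alpha(v,w)$, which is already a well-defined arrow of the groupoid $T^2(G)$ by \ref{def:inv-algd-of-grpd}, in fact factors through the subobject $T(A)\hookrightarrow T^2(G)$. First I would make this subobject explicit. Since $A$ is the pullback of \ref{def:anc_bun_of_groupoid} and $T$ preserves this pullback (by our standing hypothesis together with \ref{lem:pullbacks-in-caty-of-diff-bundles}), applying $T$ to the two conditions that cut out $A\subseteq T(G)$ shows that an element $W\in T^2(G)$ lies in $T(A)$ exactly when
\begin{equation*}
T^2(s)\,W = T(0)\,\mu \qquad\text{and}\qquad T(p)\,W = T(e)\,\mu
\end{equation*}
for a common base value $\mu = T(\pi)W\in T(M)$. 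So it suffices to verify these two equations for $W=\alpha(v,w)$, and both computations should produce $\mu=\anc pw$.

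For the second (identity-based) equation I would push $T(p)$ through the composite. The point is that $p:T(G)\to G$ is a natural transformation, hence commutes with the groupoid structure maps $s,t,e,\mu,(-)^{-1}$; applying $T$ turns $T(p):T^2(G)\to T(G)$ into a homomorphism of groupoids carrying $T^2(\mu)$ to $T(\mu)$. I can therefore distribute $T(p)$ over the three factors of $\alpha(v,w)$ and simplify each using the tangent-category identities $T(p)c=p$ and $T(p)0=0p$, the naturality of $0$ at $e$, and the defining equation $pv=em$ of $v$. This rewrites the image as a conjugate $pw\circ T(e)0m\circ (pw)^{-1}$ whose middle factor is the identity of $T(G)$ at the source $0m$ of $pw$, so collapsing the conjugation of an identity leaves the identity $T(e)\anc pw$, as desired.

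For the first (source) equation I would use that the source of a composite is the source of its rightmost factor, giving $T^2(s)\alpha(v,w)=T^2(s)(c0pw)^{-1}=T^2(t)\,c0pw$; the naturality of $c$ and $0$ (already worked out in \ref{def:inv-algd-of-grpd}) together with the coherence $c\,0_{TM}=T(0_M)$, read off from the axiom that $(c,id):T(p)\Rightarrow p$ preserves zeros, then identify this with $T(0)\anc pw$. With both equations holding against the common base $\anc pw$, the arrow $\alpha(v,w)$ factors through $T(A)$. I expect the main obstacle to be the middle paragraph: justifying rigorously that $T(p)$ respects the composition and then checking that the surviving conjugation genuinely collapses — that the middle factor really is an identity at exactly the object $s(pw)$ — all while keeping careful track of the several distinct zero sections, projections, and flips that occur in $T^2(G)$.
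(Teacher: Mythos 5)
Your proposal is correct and follows essentially the same route as the paper: the paper's proof consists exactly of your middle paragraph, distributing $T(p)$ (a groupoid homomorphism by naturality of $p$ applied to $\mu$) over the three factors of \eqref{eq:alpha-from-gpd}, simplifying via $T(p)c = p$, $T(p)0 = 0p$ and $pv = em$, and collapsing the conjugation of the identity $T(e)0m$ to $T(e)\anc pw$. The only difference is that you also verify the source condition $T^2(s)\alpha(v,w) = T(0)\anc pw$ explicitly, which the paper leaves implicit in the composability computations of \ref{def:inv-algd-of-grpd}; this is a harmless (indeed slightly more complete) addition rather than a different approach.
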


\begin{lemma}[Involution is linear I]
	The $\alpha$ defined in \eqref{eq:alpha-from-gpd} is a linear bundle morphism from $(\prolong, \pi_0)$ to $(T(A), p)$ over $id$.
	\begin{proof}
		It is easy to see that $p\alpha = \pi_0$.
		Now we check linearity.
		On the one hand
		\begin{equation*}
		 l\alpha(v, w) = lcw\circ l0v\circ (lc0pw)^{-1}
		\end{equation*}
		and on the other hand
		\begin{equation*}
		T(\alpha)(0v, cT(\lambda)w) = T(c)cT(l)w \circ T(0)0v\circ (T(c0p)cT(l)w)^{-1}
		\end{equation*}
		where we replace $\lambda$ with $l$ using \ref{not:embed-into-TG}.
		Now $l0 = T(0)0$ because $(l, 0)$ is a morphism of differential bundles.
		For the right term we check:
		\begin{align*}
			T(c)T(0)T(p)cT(l)w &= T(c)T(0)pT(l)w\\
			&= T^2(0)lpw\\
			&= lc0pw
		\end{align*}
		and for the left term:
		\begin{align*}
			cT(c)lcw &= T(l)ccw \tag{tangent category axiom}\\
			&= T(l)w\\
			&= cT(c)T(c)cT(l)w
		\end{align*}
		and so $lcw = T(c)cT(l)w$ because $cT(c)$ is an isomorphism.
		Therefore $l\alpha(v, w) = T(\alpha)(0v, cT(\lambda)w)$.
	\end{proof}
\end{lemma}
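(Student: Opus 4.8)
The plan is to use the reduction afforded by 2.16 of \cite{MR3792842}: since a morphism of differential bundles that commutes with the base maps and preserves the lift automatically preserves the fibrewise addition and zero, it suffices to check just two things. First, that the base square commutes, i.e. $p\alpha = \pi_0$ (both bundles live over $A$, and the base map is $id$). Second, that $\alpha$ carries the lift of the source bundle to the lift of the target bundle. Here the domain $(\prolong, \pi_0)$ is the differential bundle whose lift is $0 \times cT(\lambda)$, whereas the codomain $(T(A), p)$ carries the canonical tangent lift $l$; so concretely I must establish $l\alpha = T(\alpha)\,(0 \times cT(\lambda))$, which on an element $(v,w)$ reads $l\alpha(v,w) = T(\alpha)(0v, cT(\lambda)w)$.

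For the base square I would push the outer projection $p$ through the composite $\alpha(v,w) = cw \circ 0v \circ (c0pw)^{-1}$. Because $p$ is natural it commutes with the composition $\circ = T^2(\mu)$, lowering it to the composition $T(\mu)$ in $T(G)$, and it commutes with inversion. Applying the standard relations $pc = Tp$, $p0 = id$ and $T(p)0 = 0p$ converts the three factors into $T(e)T(t)v$, $v$ and $(T(e)0m)^{-1} = T(e)0m$. Using the typing $pv = em$, $T(s)v = 0m$ from \ref{not:embed-into-TG}, the two outer factors are identity arrows at the target and source ends of $v$, so the groupoid unit laws at tangent level collapse the composite to $v = \pi_0(v,w)$.

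The substance is the lift identity. Replacing $\lambda$ by $l$ on the embedded elements via \ref{not:embed-into-TG}, I would distribute $l$ and $T$ respectively through $\circ$ (each raising it to $T^3(\mu)$ and commuting with inversion) to obtain
\[
l\alpha(v,w) = lcw \circ l0v \circ (lc0pw)^{-1}
\]
and
\[
T(\alpha)(0v, cT(l)w) = T(c)cT(l)w \circ T(0)0v \circ (T(c0p)cT(l)w)^{-1},
\]
and then match the three factors. The middle factor is immediate from $l0 = T(0)0$, which holds because $(l,0)$ is a differential bundle morphism. The right (inverse) factor reduces by the coherences $T(p)c = p$, $pT(l) = lp$, $c0 = T(0)$ together with the naturality $lT(0)=T^2(0)l$, chaining $T(c)T(0)T(p)cT(l)w = T(c)T(0)pT(l)w = T^2(0)lpw = lc0pw$.

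I expect the genuine obstacle to be the \emph{left} factor, the identity $lcw = T(c)cT(l)w$, which is not among the listed coherences. My plan is to precompose with the isomorphism $cT(c)$, whose inverse is $T(c)c$ since $T(c)T(c) = T(cc) = id$, and to invoke the tangent-category axiom $cT(c)l = T(l)c$. On one side $cT(c)lcw = T(l)ccw = T(l)w$, while on the other $cT(c)\bigl(T(c)cT(l)w\bigr) = cc\,T(l)w = T(l)w$; cancelling the iso then yields $lcw = T(c)cT(l)w$. Combining the three matched factors gives $l\alpha(v,w) = T(\alpha)(0v, cT(l)w)$, and with the base square this establishes that $(\alpha, id)$ is a linear differential bundle morphism.
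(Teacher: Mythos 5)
Your proof is correct and follows essentially the same route as the paper's: the identical three-factor matching of $l\alpha(v,w) = lcw \circ l0v \circ (lc0pw)^{-1}$ against $T(\alpha)(0v, cT(\lambda)w)$, including the same middle-factor identity $l0 = T(0)0$, the same right-factor chain ending in $T^2(0)lpw = lc0pw$, and the same resolution of the left factor by cancelling the isomorphism $cT(c)$ via the axiom $cT(c)l = T(l)c$. The only differences are cosmetic: you spell out the base equation $p\alpha = \pi_0$ (which the paper dismisses as easy) and you cite 2.16 of \cite{MR3792842} to note additivity comes for free, both of which are consistent with the paper's conventions.
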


\begin{lemma}[Involution is linear II]
	The $\alpha$ defined in \eqref{eq:alpha-from-gpd} is a linear bundle morphism from $(\prolong, p\pi_1)$ to $(T(A), T(\pi))$ over $\anc$.
	\begin{proof}
		It is easy to see that $\anc p \pi_1 = T(\pi)\alpha$.
		Now we check linearity.
		On the one hand
		\begin{equation*}
		cT(\lambda)\alpha(v, w) = cT(l)cw \circ cT(l)0v \circ (cT(l)c0pw)^{-1}
		\end{equation*}
		and on the other hand
		\begin{equation*}
		T(\alpha)(\lambda v, l w) = T(c)lw \circ T(0)l v \circ (T(c0p)lw)^{-1}
		\end{equation*}
		where we replace $\lambda$ with $l$ using \ref{not:embed-into-TG}.
		The left terms are equal by a tangent category axiom and the middle terms are equal because $0$ is a natural transformation.
		For the right term:
		\begin{align*}
			cT(l)c0pw &= T(c)l0pw \tag{tangent category axiom}\\
			&= T(c)T(0)0pw\\
			&= T(c)T(0)T(p)lw
		\end{align*}
		as required.
	\end{proof}
\end{lemma}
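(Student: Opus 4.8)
The plan is to verify the two requirements separately: that $(\alpha, \anc)$ is a morphism of the underlying additive bundles, and that it is linear, i.e.\ compatible with the lifts $\lambda\times l$ on $(\prolong, p\pi_1)$ (the bundle described in the remark on the bundle with $p\pi_1$) and $cT(\lambda)$ on $(T(A), T(\pi))$ (see \ref{lem:T-of-diff-bundle}).

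First I would dispose of the bundle-morphism condition $T(\pi)\alpha = \anc\,p\pi_1$. Under the embedding of \ref{not:embed-into-TG} the projection $\pi$ is the restriction of $p$ along $e$, so $T(\pi)$ is the restriction of $T(p)$; hence this identity is exactly the computation already carried out in \ref{lem:gpd-alpha-well-typed}, where we found $T(p)\alpha(v,w) = T(e)\anc\,pw$. Reading off the base component gives $T(\pi)\alpha(v,w) = \anc\,pw = \anc\,p\pi_1(v,w)$, and preservation of the remaining additive structure follows from 2.16 of \cite{MR3792842}, since a linear morphism automatically preserves zero and addition.

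The substance is the linearity equation $cT(\lambda)\alpha(v,w) = T(\alpha)(\lambda v, l w)$. As in the preceding lemma, the strategy is to expand each side into three composable factors and then match them termwise. The enabling principle---the same one used for the bundle $(\prolong, \pi_0)$---is that each tangent natural transformation and each of their composites (here $cT(\lambda)=cT(l)$ on the embedded elements, by \ref{not:embed-into-TG}) commutes with the groupoid composition $\circ = T^2(\mu)$, carrying it to the composition $T^3(\mu)$ of the images; this holds because $\mu$ is defined on a pullback that $T$ preserves and each of $l$, $c$, $T(l)$ is natural and an additive bundle morphism, so composable pairs are sent to composable pairs. After distributing, and writing $\lambda = l$ throughout, the left-hand side becomes $cT(l)cw \circ cT(l)0v \circ (cT(l)c0pw)^{-1}$ and the right-hand side $T(c)lw \circ T(0)lv \circ (T(c0p)lw)^{-1}$.

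It then remains to match the three factors. For the first factor I would use the tangent-category identity $cT(c)l = T(l)c$: multiplying on the left by $c$ and using $cc = id$ yields $cT(l)c = T(c)l$, so $cT(l)cw = T(c)lw$. The middle factors agree by naturality of the zero section $0$. For the third factor it suffices (inverses being preserved) to show $cT(l)c0p = T(c)T(0)T(p)l$, which follows by applying $cT(l)c = T(c)l$, then $l0 = T(0)0$ (since $(l,0)$ is an additive bundle morphism $p \Rightarrow T(p)$), and finally $0p = T(p)l$ (the base-commuting square of that same morphism). The main obstacle is not any single identity but the bookkeeping: ensuring the distribution step over $\circ$ is legitimate at the iterated level $T^3(\mu)$ and keeping track of which copy of $T$ each of $l$, $c$, $T(l)$ acts upon, after which every factor collapses by a single tangent axiom.
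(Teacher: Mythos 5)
Your proposal is correct and follows essentially the same route as the paper's proof: both expand $cT(\lambda)\alpha(v,w)$ and $T(\alpha)(\lambda v, lw)$ into the same three composable factors and match them termwise via $cT(l)c = T(c)l$ (from the axiom $cT(c)l = T(l)c$), naturality of $0$ for the middle terms, and the chain $cT(l)c0p = T(c)l0p = T(c)T(0)0p = T(c)T(0)T(p)l$ for the right terms. The additional justifications you supply --- deducing $T(\pi)\alpha = \anc\, p\pi_1$ from the computation in \ref{lem:gpd-alpha-well-typed}, invoking 2.16 of \cite{MR3792842} for additivity, and checking that $cT(l)$ distributes over $\circ$ --- are points the paper glosses as ``easy to see'' or leaves implicit, so they elaborate rather than diverge from its argument.
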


\begin{lemma}\label{lem:gpd-alpha-first-three}
	The $\alpha$ in \eqref{eq:alpha-from-gpd} satisfies:-
	\begin{align*}
		\alpha(\xi\pi, \lambda) &= \lambda \tag{inv. algd. unit}\\
		T(\anc)\alpha &= cT(\anc)\pi_1 \tag{inv. algd. target}\\
		\alpha(p\pi_1, \alpha) &= \pi_1 \tag{inv. algd. inv.}
	\end{align*}
	\begin{proof}
		Firstly
		\begin{align*}
			\alpha(\xi\pi, \lambda)v &= c\lambda v\circ 0\xi m\circ (c0p\lambda v)^{-1}\\
			&=clv \circ 0\xi m\circ (c0pplv)^{-1}\tag{$l=\lambda$ from \ref{not:embed-into-TG}}\\
			&= lv\circ T(e)0m\circ (c00pv)^{-1}\\
			&= lv\circ T^2(e)c00m\\
			&= \lambda v\tag{$l=\lambda$ from \ref{not:embed-into-TG}}
		\end{align*}
		secondly
		\begin{align*}
			T(\anc)\alpha(v, w) &= T^2(t)(cw\circ 0v\circ (c0pw)^{-1})\\
			&= T^2(t)cw\\
			&= cT^2(t) w\\
			&= cT(\anc) w
		\end{align*}
		and thirdly
		\begin{align*}
			\alpha(pw, \alpha(v, w)) &= c\alpha(v, w)\circ 0pw\circ (c0p\alpha(v, w))^{-1}\\
			&= c\alpha(v, w)\circ 0pw\circ (c0v)^{-1}\tag{inv. algd. 0}\\
			&= c(cw \circ 0v\circ (c0pw)^{-1})\circ 0pw \circ (c0v)^{-1} \\
			&= w \circ c0v\circ (0pw)^{-1}\circ 0pw \circ (c0v)^{-1}\\
			&= w
		\end{align*}
		as required.
	\end{proof}
\end{lemma}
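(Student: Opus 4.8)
The plan is to verify each of the three identities by direct substitution into the defining formula \eqref{eq:alpha-from-gpd}, $\alpha(v,w)=cw\circ 0v\circ(c0pw)^{-1}$, and then to collapse the resulting composite in $T(G)$ using the groupoid laws together with the standard tangent-category compatibilities among $T$, $c$, $l$, $0$ and $p$. Throughout I would use the identification of \ref{not:embed-into-TG}, that the lift $\lambda$ of $(A,\pi)$ acts as $v\mapsto lv$ once $A$ is embedded in $T(G)$, so that every $\lambda$ can be traded for $l$ and manipulated with the tangent axioms such as $cl=l$ and $pl=0p$.

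For (inv. algd. unit) I would substitute $\xi\pi$ into the first slot and $\lambda$ into the second, obtaining $\alpha(\xi\pi,\lambda)v=c\lambda v\circ 0\xi m\circ(c0p\lambda v)^{-1}$ with $m=\pi v$. Rewriting $\lambda v=lv$ and using $cl=l$ turns the left factor into $lv$, while the remaining two factors are built from $0$, $\xi$ and $T(e)$ and so are (iterated tangent lifts of) identity arrows; the groupoid unit law then collapses the triple composite to $lv=\lambda v$. For (inv. algd. target) I would apply $T(\anc)=T^2(t)$ to $\alpha(v,w)$. Since the target of a groupoid composite is the target of its left-hand factor, only $cw$ survives, and commuting $T^2(t)$ past $c$ (naturality of $c$ with respect to $t$) gives $cT^2(t)w=cT(\anc)\pi_1$, exactly the claim.

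The substantive case is (inv. algd. inv.), which asserts that $\sigma=(p\pi_1,\alpha)$ is an involution. Here I would first invoke (inv. algd. 0), namely $p\alpha=\pi_0$, so that $p\,\alpha(v,w)=v$ and the outer application reads $\alpha(pw,\alpha(v,w))=c\,\alpha(v,w)\circ 0pw\circ(c0v)^{-1}$; substituting the formula for $\alpha(v,w)$ yields a five-fold composite. Using that $c$ is natural with respect to the multiplication (so it distributes over $\circ$ without reordering) and that $cc=\mathrm{id}$, the outer $c$ rewrites $c(cw\circ 0v\circ(c0pw)^{-1})$ as $w\circ c0v\circ(0pw)^{-1}$, after which the adjacent pairs $(0pw)^{-1}\circ 0pw$ and $c0v\circ(c0v)^{-1}$ cancel by the groupoid inverse law, leaving $w=\pi_1$. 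I expect this last identity to be the main obstacle: the difficulty is not any single rewrite but the bookkeeping, since one must confirm composability at each stage from the source and target computations of \ref{def:inv-algd-of-grpd} and keep straight that the groupoid inverse reverses the order of a composite whereas $c$ does not.
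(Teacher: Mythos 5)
Your proposal is correct and follows essentially the same route as the paper's proof: substitute into $\alpha(v,w)=cw\circ 0v\circ(c0pw)^{-1}$, trade $\lambda$ for $l$ via \ref{not:embed-into-TG}, use $cl=l$ and the groupoid unit law for (inv.\ algd.\ unit), kill all but the left factor under $T^2(t)$ and commute $c$ past it for (inv.\ algd.\ target), and for (inv.\ algd.\ inv.) use $p\alpha=\pi_0$, distribute the outer $c$ over the composite with $cc=\mathrm{id}$, and cancel the adjacent inverse pairs. The bookkeeping concerns you flag (composability via \ref{def:inv-algd-of-grpd}, the behaviour of $(-)^{-1}$ versus $c$ on composites) are exactly the points the paper's computation handles implicitly, so there is no gap.
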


\begin{lemma}\label{lem:gpd-alpha-flip}
	The $\alpha$ above satisfies the flip axiom:
	\begin{equation*}
	T(\alpha)(\alpha(v, w), x) = cT(\alpha)(\alpha(v, px), cT(\alpha)(w, cx))
	\end{equation*}
	\begin{proof}
		The LHS is:
		\begin{align*}
			T&(\alpha)(\alpha(v, w), x) \\
			&= T(c)x \circ T(0)\alpha(v, w) \circ (T(c0p)x)^{-1}\\
			&=T(c)x \circ T(0)\left(cw\circ 0v \circ (c0pw)^{-1} \right)\circ (T(c0p)x)^{-1}\\
			&= T(c)x \circ T(0)cw \circ T(0)0v \circ (T(0)T(0)pw)^{-1} \circ (T(c0p)x)^{-1}
		\end{align*}
		and the RHS is:
		\begin{align*}
			c&T(\alpha)(\alpha(v, px), c T(\alpha)(w, cx))\\
			&= cT(c)cT(\alpha)(w, cx)\circ cT(0)\alpha(v, px)\circ \left(cT(c0p)cT(\alpha)(w, cx)\right)^{-1}
		\end{align*}
		We expand each of the three arrows in the composition on the RHS separately.
		The left term is:
		\begin{align*}
			c&T(c)cT(\alpha)(w, cx)\\
			&= cT(c)c(T(c)c x \circ T(0)w\circ(T(c0p)cx)^{-1})\\
			&=cT(c)cT(c)c x \circ cT(c)cT(0)w \circ (cT(c)cT(c)T(0)T(p)cx)^{-1}\\
			&= T(c)x \circ T(0)cw \circ (0cpx)^{-1}
		\end{align*}
		The middle term is:
		\begin{align*}
			c&T(0)\alpha(v, px)\\
			&= cT(0)(cpx\circ 0v\circ (c0ppx)^{-1})\\
			&=cT(0)cpx\circ cT(0)0v\circ (cT(0)c0ppx)^{-1}\\
			&=0cpx \circ T(0)0v \circ (0T(0)ppx)^{-1}
		\end{align*}
		The right term is:
		\begin{align*}
			(c&T(c0p)cT(\alpha)(w, cx))^{-1}\\
			&=(T^2(0)c\alpha(pw, pcx))^{-1}\\
			&=(T^2(0)c (cpcx\circ 0pw \circ (c0ppcx)^{-1}))^{-1}\\
			&=(T^2(0)pcx\circ T^2(0)c0pw \circ (T^2(0)0ppcx)^{-1})^{-1}\\
			&=T^2(0)0ppcx \circ (T^2(0)c0pw)^{-1} \circ (T^2(0)pcx)^{-1}\\
			&=0T(0)ppx \circ (T(0)T(0)pw)^{-1} \circ (T(c0p)x)^{-1}
		\end{align*}
		Now the result follows easily.
	\end{proof}
\end{lemma}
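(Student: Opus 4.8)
The plan is to establish the flip axiom by a direct expansion of both sides, reducing each to an ordered composite of tangent-natural transformations applied to $v$, $w$ and $x$, and then matching the two composites. The entire argument rests on a single structural observation, which I would isolate first; once it is in place, the composability of all the composites that arise is guaranteed by \ref{def:inv-algd-of-grpd} and \ref{lem:gpd-alpha-well-typed}, so I may manipulate freely.

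The key fact is that every component of the tangent structure is a groupoid homomorphism between the relevant tangent groupoids. Concretely, since $0\colon \mathrm{id}\Rightarrow T$ is natural with respect to $\mu$ and $T$ preserves the defining pullback, the map $0_{TG}$ is a homomorphism $(TG, T\mu)\to(T^2G, T^2\mu)$; applying $T$ keeps this true, and the same reasoning applies to $c$ and $p$. Because $\circ$ in this lemma is $T^3(\mu)$, each of $T(c)$, $T(0)$, $T(p)$ and their composites therefore distributes over $\circ$ and commutes with taking inverses, e.g.\ $T(0)(g\circ h)=T(0)g\circ T(0)h$ and $T(0)(g^{-1})=(T(0)g)^{-1}$. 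This is the one fact that turns each nested expression $T(\alpha)(\ldots)$ into an ordered composite of atomic arrows of the form (an iterate, flip or lift of $p$ and $0$) applied to one of $v,w,x$.

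First I would expand the left-hand side: applying the outer $T(\alpha)$ to $(\alpha(v,w),x)$ gives $T(c)x\circ T(0)\alpha(v,w)\circ (T(c0p)x)^{-1}$, and then substituting $\alpha(v,w)=cw\circ 0v\circ(c0pw)^{-1}$, distributing $T(0)$ over the composite and its inverse, and rewriting the inner $c0$ via the coherence $c0=T(0)$ (a standard consequence of the axioms, namely $c_G0_{TG}=T(0_G)$), puts it in the form
\[
T(c)x\circ T(0)cw\circ T(0)0v\circ (T(0)T(0)pw)^{-1}\circ (T(c0p)x)^{-1}.
\]
Next I would expand the right-hand side $cT(\alpha)(\alpha(v,px),cT(\alpha)(w,cx))$, which is the more laborious half because of its triple nesting. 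Applying $c$ and the outer $T(\alpha)$ presents it as the composite of the three arrows $cT(c)cT(\alpha)(w,cx)$, $cT(0)\alpha(v,px)$ and $(cT(c0p)cT(\alpha)(w,cx))^{-1}$, and I would expand each separately, repeatedly collapsing long strings of flips via the coherences $cc=\mathrm{id}$, $cl=l$, $c0=T(0)$ and the hexagon $T(c)cT(c)=cT(c)c$ (for instance $cT(c)cT(c)c=T(c)$).

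Finally I would compose the three expanded terms and observe the telescoping cancellation: the trailing factor of the first term is the inverse of the leading factor of the second ($(0cpx)^{-1}$ against $0cpx$), and likewise the trailing factor of the second cancels the leading factor of the third ($(0T(0)ppx)^{-1}$ against $0T(0)ppx$), so that the right-hand side collapses to precisely the five-fold composite found for the left-hand side. I expect the main obstacle to be purely bookkeeping: the triple nesting on the right generates long chains of flips and lifts whose simplification demands applying the coherences in exactly the right order, and one must track both the non-commutative order of the groupoid composites and the placement of inverses so that the cancellation is legitimate. The conceptual content is entirely carried by the homomorphism principle; once that is in hand the verification is deterministic.
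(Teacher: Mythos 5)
Your proposal is correct and follows essentially the same route as the paper's own proof: both expand the left-hand side to the five-fold composite $T(c)x\circ T(0)cw\circ T(0)0v\circ(T(0)T(0)pw)^{-1}\circ(T(c0p)x)^{-1}$, expand the right-hand side as the same three nested terms, and conclude by the same telescoping cancellations of $(0cpx)^{-1}$ against $0cpx$ and $(0T(0)ppx)^{-1}$ against $0T(0)ppx$. The only difference is that you state explicitly the homomorphism principle (that $T(0)$, $T(c)$, $T(p)$ distribute over $\circ$ and inverses because they are groupoid homomorphisms, via naturality and preservation of the relevant pullbacks by $T$), which the paper uses tacitly --- a worthwhile clarification, not a different argument.
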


% subsection the_involution_algebroid_of_a_groupoid (end)

% subsection definition_of_involution_algebroid (end)

\section{Lie algebroids and involution algebroids} % (fold)
\label{sec:lie_algebroids_from_involution_algebroids}

In this section we compare involution algebroids with Lie algebroids and also describe how this comparison specialises to the case of Lie algebras.
In \ref{subsub:the_involution_algebroid_of_a_lie_algebroid} we show that every Lie algebroid is an example of an involution algebroid and in the other direction every involution algebroid in a tangent category has a Lie bracket on its set of sections (see \ref{subsubsec:the_lie_bracket_of_an_involution_algebroid} and \ref{subsubsec:Leibniz}).
In \ref{ssubub:injection_on_objects} we combine the work of \ref{subsub:the_involution_algebroid_of_a_lie_algebroid} and \ref{subsubsec:the_lie_bracket_of_an_involution_algebroid} to produce an injection on objects from the category of Lie algebroids to the category of involution algebroids.
In the same way as a Lie algebra is a Lie algebroid for which the base space is trivial we can define involution algebras as involution algebroids that have trivial base space.
In \ref{subsub:inv-alg-of-lie-alg} we write out in elementary terms how every Lie algebra is an example of an involution algebra and in \ref{subsub:equivalence_of_categories} show that there is an equivalence of categories between the category of Lie algebras and involution algebras in the category of smooth manifolds.

\subsection{Comparing brackets to involutions} % (fold)
\label{subsec:comparing_lie_brackets_to_involutions}

In this section we describe the equation that we use to construct a Lie bracket from the flip map of an involution algebroid and vice-versa.
In the sequel we need various rearrangements and special cases of this equation which we work out in this section.
To begin with suppose that we start with a Lie algebroid $\pi:A \rightarrow M$ with bracket $[-,-]$.
Then we define
\begin{equation}\label{eq:master-eqn}
	\alpha(v, w) = \alpha_L +_{T(\pi)} \alpha_R
\end{equation}
where
\[\alpha_L = T(\overline{v})\anc p w -_p(0v+_{T(\pi)}\lambda[\overline{pw}, \overline{v}])\]
and 
\[\alpha_R = T(\xi)\anc p w -_pT(\xi)\anc v+_p(w-_{T(\pi)}T(\overline{pw})\anc v)\]
where $\overline{a}$ denotes any section of $\pi$ such that $\overline{a}\pi a = a$.
In order to simplify equation \ref{eq:master-eqn} (and show that it is independent of the various choices of sections) we use the theory of connections.
The following definitions are 3.2, 4.5 and 5.2 in \cite{MR3684725}.
\begin{definition}[Vertical connection]
	A \emph{vertical connection on $\pi$} is a section $K$ of $\lambda: A \rightarrow T(A)$ such that $(K, p): T(\pi) \Rightarrow \pi$ and $(K, \pi): p \Rightarrow \pi$ are linear bundle morphisms.
\end{definition}
\begin{definition}[Horizontal connection]
	A \emph{horizontal connection on $\pi$} is a section $H$ of $(p, T(\pi)):T(A) \rightarrow A\ts{\pi}{p}T(M)$ such that $(H, id): \pi^*(p) \Rightarrow p$ and $(H, id): p^*(\pi) \Rightarrow T(\pi)$ are linear bundle morphisms.
\end{definition}
\begin{definition}[Full connection]
	A \emph{(full) connection} on $\pi$ consists of a vertical connection $K$ and horizontal connection $H$ on $\pi$ such that $KH = \xi \pi \pi_0$ and $H(p, T(\pi))+_p (\lambda K +_{T(\pi)} 0 p) = id$.
\end{definition}
If $(K, H)$ is a full connection then we have the following decomposition which is proposition 5.8 of \cite{MR3684725}.
\begin{proposition}\label{prop:decomp-full-connection}
	If $\pi$ has a full connection then the arrow
	\[(p, T(\pi), K): T(A) \rightarrow A\ts{\pi}{p} T(M)\ts{p}{\pi} A\]
	has inverse $H(\pi_0, \pi_1)+_p (\lambda\pi_2 +_{T(\pi)} 0\pi_0)$.
\end{proposition}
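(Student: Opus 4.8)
The plan is to verify directly that the proposed map is a two-sided inverse, so write $\Phi := (p, T(\pi), K): T(A) \to A\ts{\pi}{p}T(M)\ts{p}{\pi}A$ and $\Psi := H(\pi_0,\pi_1) +_p (\lambda\pi_2 +_{T(\pi)} 0\pi_0)$, where $\pi_0,\pi_1,\pi_2$ are the projections of the triple pullback onto $A$, $T(M)$, $A$. First I would confirm that $\Phi$ lands in the triple pullback and that $\Psi$ is well-typed. For $\Psi$ the inner sum is legitimate because $T(\pi)\lambda = 0\pi = T(\pi)0$, and the outer $+_p$ is legitimate because $p\lambda = \xi\pi$ and $p0 = id$ force $p(\lambda\pi_2 +_{T(\pi)} 0\pi_0) = \xi\pi\pi_0 +_\pi \pi_0 = \pi_0$, which matches $pH(\pi_0,\pi_1) = \pi_0$ (using the section identity $(p,T(\pi))H = id$).

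One composite is essentially free. Substituting $\pi_0\Phi = p$, $\pi_1\Phi = T(\pi)$ and $\pi_2\Phi = K$ turns $\Psi\Phi$ into $H(p, T(\pi)) +_p (\lambda K +_{T(\pi)} 0p)$, which is exactly the second defining equation of a full connection. Hence $\Psi\Phi = id_{T(A)}$ with no calculation at all, and all the content is in the other composite.

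The real work is to show $\Phi\Psi = id$ on the triple pullback, which I would do componentwise by computing $p\Psi$, $T(\pi)\Psi$ and $K\Psi$. In each case the idea is the same: the relevant projection is an additive-bundle morphism for one or both of the two additions on $T(A)$, so it distributes over the sums, after which the leftover terms collapse because the projections send the zero and lift sections to zeros absorbed by $+_\pi$ or $+_p$. Concretely, $p$ sends a $+_p$-sum to its common base, giving $p\Psi = pH(\pi_0,\pi_1) = \pi_0$. For $T(\pi)$ one uses that $(T(\pi),\pi)$ is additive over $+_p$ together with $+_{T(\pi)} = T(+_\pi)$, so $T(\pi)(\lambda\pi_2 +_{T(\pi)} 0\pi_0) = T(\pi)\lambda\pi_2 = 0\pi\pi_0$ is the $+_p$-zero, leaving $T(\pi)\Psi = T(\pi)H(\pi_0,\pi_1) = \pi_1$. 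For $K$ one uses that $K$ is additive for both $+_p$ (from $(K,\pi):p \Rightarrow \pi$) and $+_{T(\pi)}$ (from $(K,p):T(\pi) \Rightarrow \pi$), together with the connection identities $K\lambda = id$, $K0 = \xi\pi$ and $KH = \xi\pi\pi_0$, to reduce $K\Psi = \xi\pi\pi_0 +_\pi \pi_2 +_\pi \xi\pi\pi_0$ to $\pi_2$ once the two zero terms are absorbed. Thus $\Phi\Psi = (\pi_0,\pi_1,\pi_2) = id$.

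I expect the main obstacle to be bookkeeping rather than anything conceptual: one must keep straight which of $+_p$ and $+_{T(\pi)}$ each projection respects and repeatedly invoke $+_{T(\pi)} = T(+_\pi)$ so that applying $T(\pi)$ to a $+_{T(\pi)}$-sum discards the second summand, while applying $K$ to it splits additively. The linearity clauses built into the definitions of vertical and horizontal connection are precisely what licenses these distributions, so once they are unwound the argument is a careful but routine verification.
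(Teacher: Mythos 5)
Your proof is correct, and it is worth noting that it does more than the paper does: the paper offers no proof of this proposition at all, simply quoting it as Proposition 5.8 of \cite{MR3684725}, so your direct verification is a self-contained argument where the paper defers to a citation. The structure of your verification is sound. The composite $\Psi\Phi$ is, as you say, literally the second axiom of a full connection once you substitute $\pi_0\Phi = p$, $\pi_1\Phi = T(\pi)$, $\pi_2\Phi = K$; and the componentwise computation of $\Phi\Psi$ works exactly as you describe: $p$ collapses a $+_p$-sum to its base point, $T(\pi)$ collapses a $+_{T(\pi)}$-sum to its base point and distributes over $+_p$ by naturality of $+$, and $K$ distributes over both additions, after which $KH = \xi\pi\pi_0$, $K\lambda = id$ (the section condition on a vertical connection) and $K0 = \xi\pi$ (preservation of zeros by $(K,\pi): p \Rightarrow \pi$) reduce everything to $\pi_2$, with the zero terms absorbed because $\pi\pi_0 = p\pi_1 = \pi\pi_2$ on the pullback. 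The one point you should make explicit rather than leave implicit is the justification for these distributions: the connection axioms assert that $(K,p)$, $(K,\pi)$, $(H,id)$ are \emph{linear} bundle morphisms (i.e. lift-preserving), and the fact that linearity implies preservation of addition and zero is 2.16 of \cite{MR3792842}, which the paper itself invokes in the proof sketch of its pullback lemma; citing it here closes the only gap between "linear" as defined and "additive" as used in your bookkeeping.
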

In terms of an arbitrary (full) connection \eqref{eq:master-eqn} simplifies to:
\begin{equation}\label{eq:master+connection}
	\alpha(v, w) = H(v, \anc pw) +_p (\lambda \alpha_2 +_{T(\pi)} 0 v)
\end{equation}
where $\alpha_2 = KT(\overline{v})\anc pw - KT(\overline{pw})\anc v + Kw - [\overline{pw}, \overline{v}]$.
Note that \eqref{eq:master-eqn} shows that \eqref{eq:master+connection} independent of the choice of connection.
If we additionally assume that $(K, H)$ is the Levi-Civita connection then $[X, Y] = KT(Y)\anc X -_{\pi} KT(X)\anc Y$ as described in section 3.1 of \cite{MR2914129}.
Therefore \eqref{eq:master-eqn} simplifies further to:
\begin{equation}\label{eq:master+levi-civita}
\alpha(v, w) = H(v, \anc pw) +_p (\lambda K w +_{T(\pi)} 0v)
\end{equation}
which also shows that \eqref{eq:master-eqn} is independent of the choice of sections extending $v$ and $pw$.
In fact under the isomorphism of \ref{prop:decomp-full-connection} $\alpha \cong (\pi_0, \anc \pi_1, \pi_2)$ and therefore 
\begin{equation}\label{eq:simple-sigma}
\sigma \cong (\pi_1, \pi_0, \pi_2)
\end{equation}
We use \eqref{eq:master+levi-civita} and \eqref{eq:simple-sigma} in \ref{subsub:the_involution_algebroid_of_a_lie_algebroid} to show that every Lie algebroid is an involution algebroid.

Next we simplify \eqref{eq:master-eqn} in the case that $A$ is a differential object as defined in \ref{def:differential-object}.
Under the isomorphism $T(A)\cong A\times A$ we have a connection where $K = \pi_1$, $H = 0\pi_0$ and $\lambda = (0, id)$ and so \eqref{eq:master+connection} becomes:
\begin{equation}\label{eq:master+diff-obj}
\alpha(v, w_H, w_V) = (v, [v, w_H] + w_V)
\end{equation}
which is the form we use in \ref{subsub:inv-alg-of-lie-alg} to show that every Lie algebra is an involution algebra.

Now we describe how to rearrange \eqref{eq:master-eqn} to obtain a Lie bracket $[-,-]$ from an flip map $\alpha$.
So let $X$ and $Y$ be sections of $\pi:A \rightarrow M$ where $A$ is an involution algebroid in a tangent category.
In \eqref{eq:master-eqn} we let $v = Xm$ and $w = T(Y)\anc v$ to obtain:
\begin{equation*}
	\alpha_R(Xm, T(Y)\anc Xm) = T(\xi)(\anc pw-_p \anc v)
\end{equation*}
and so
\begin{equation}\label{eq:master-bracket}
	(T(X)\anc Ym-_p\alpha(X, T(Y)\anc X)m)-_{T(\pi)}0Xm = \lambda [Y, X]m
\end{equation}
which is the form we use in \ref{sub:the_case_of_lie_algebroids} to show that every involution algebroid in a tangent category has a Lie bracket on its set of sections.
If in addition we use a connection $(K, H)$ then we obtain:
\begin{equation*}
[X, Y] = K\alpha(X, T(Y)\anc X)-KT(X)\anc Y
\end{equation*}
If we further restrict to the case of differential objects we see that the bracket is given in terms of $\alpha$ by:
\begin{equation}\label{eq:master-bracket-diff-obj}
	[X, Y] = \pi_1 \alpha(X, Y, 0)
\end{equation}
which could have alternatively been obtained from \eqref{eq:master+diff-obj}.

\subsection{Lie algebroids and involution algebroids} % (fold)
\label{sub:the_case_of_lie_algebroids}

Now we compare involution algebroids and Lie algebroids.
In \ref{subsub:the_involution_algebroid_of_a_lie_algebroid} we use \eqref{eq:master-eqn} to show that every Lie algebroid is an involution algebroid and in \ref{subsubsec:the_lie_bracket_of_an_involution_algebroid} we use the same equation \eqref{eq:master-eqn} to show that every involution algebroid has a Lie bracket on its set of sections.
Moreover in the presence of a unit object as described in \ref{sec:representable-units-function-algebras} we show in \ref{subsubsec:Leibniz} that the bracket on the sections of an involution algebroid satisfies the Leibniz law which is part of the classical definition of Lie algebroid.
Since \ref{subsub:the_involution_algebroid_of_a_lie_algebroid} and \ref{subsubsec:the_lie_bracket_of_an_involution_algebroid} both use the same equation we can show in \ref{ssubub:injection_on_objects} that there is an injection on objects from the category of Lie algebroids to the category of involution algebroids in smooth manifolds.

\subsubsection{The involution algebroid of a Lie algebroid} % (fold)
\label{subsub:the_involution_algebroid_of_a_lie_algebroid}

Recall that a Lie algebroid is a smooth vector bundle $\pi: A \rightarrow M$ equipped with a vector bundle morphism $\anc:A \Rightarrow T(M)$ and a Lie bracket $[-,-]:\Gamma(A)\times \Gamma(A) \rightarrow \Gamma(A)$ on the sections $\Gamma(A)$ of $\pi$ such that the following \emph{Leibniz law} holds:
\begin{equation*}
[X, f\cdot Y] = f\cdot [X, Y] + \mathcal L_{\anc X}(f)\cdot Y
\end{equation*}
where $X, Y\in \Gamma(A)$, $f\in C^{\infty}(M, \mathbb{R})$ and $(\anc X)(f)$ denotes the Lie derivative of $f$ in the direction specified by $\anc X$.
Now we show that all Lie algebroids are examples of involution algebroids with flip map given by \eqref{eq:master-eqn}.
To ease the calculations we make use of the various equivalent forms of equation \eqref{eq:master-eqn} worked out in \ref{subsec:comparing_lie_brackets_to_involutions}.

\begin{lemma}[Bundle morphism I]
	The $\alpha$ of \eqref{eq:master+levi-civita} is a linear bundle morphism $(\prolong, \pi_0) \Rightarrow (T(A), p)$ over $id$.
	\begin{proof}
		It is routine to check that $\alpha$ is a bundle morphism. 
		We now check that $\alpha$ is linear:
		\begin{align*}
			&T(\alpha)(0v, cT(\lambda)w)\\
			&= T(H)(0v, T(\anc p) cT(\lambda)w) +_{T(p)} (T(\lambda K)cT(\lambda)w +_{T^2(\pi)} T(0)0v)\\
			&= T(H)(0v, T(\anc)\lambda pw) +_{T(p)} (T(\lambda)\lambda Kw +_{T^2(\pi)} T(0)0v)\tag{3.3 (d) in \cite{MR3684725}}\\
			&= T(H)(0v, T(\anc)\lambda pw) +_{T(p)} (l\lambda Kw +_{T^2(\pi)} l0v)\tag{diff. bundle axioms}\\
			&= T(H)(0v, l \anc pw) +_{T(p)} (l\lambda Kw +_{T^2(\pi)} l0v)\tag{$\anc$ linear}\\
			&= lH(v, \anc pw) +_{T(p)} (l\lambda Kw +_{T^2(\pi)} l0v)\tag{$H$ linear}\\
			&= l\alpha(v, w) \tag{$l$ linear}
		\end{align*}
		as required.
	\end{proof}
\end{lemma}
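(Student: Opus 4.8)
The plan is to verify the two requirements in the definition of a linear bundle morphism: that $(\alpha, \mathrm{id})$ is a morphism of the underlying differential bundles, and that it preserves the lift. Since a linear morphism automatically preserves zero and addition (2.16 of \cite{MR3792842}), the morphism part reduces to the projection condition $p\alpha = \pi_0$. Reading this off \eqref{eq:master+levi-civita}, I would use that $H$ is a section of $(p, T(\pi))$ (so $pH(v, \anc pw) = v$) together with $T(\pi)\lambda = 0\pi$, $\pi K = pT(\pi)$ and $p\anc = \pi$ to check that $\lambda Kw +_{T(\pi)} 0v$ also projects under $p$ to $v$; hence $p\alpha(v,w) = v = \pi_0(v,w)$. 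This is the routine part, which I would dispatch first.

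For linearity I first record the two lifts involved. By the remark on the bundle $(\prolong, \pi_0)$ its lift is $0\times cT(\lambda)$, and by \ref{lem:T-of-diff-bundle} the bundle $(T(A), p)$ carries the canonical lift $l$. The condition to be checked is therefore exactly
\[
  T(\alpha)\bigl(0v,\, cT(\lambda)w\bigr) = l\,\alpha(v,w).
\]
Before the left-hand side even makes sense I would confirm that $(0v, cT(\lambda)w)$ is a genuine element of $T(\prolong) \cong T(A)\ts{T(\anc)}{T^2(\pi)}T^2(A)$, i.e.\ that $T(\anc)0v = T^2(\pi)cT(\lambda)w$; this follows from naturality of $0$ and $c$ together with $T(\pi)\lambda = 0\pi$ and $cT(0) = 0$.

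The substance of the proof is the calculation itself. My approach is to apply $T$ to \eqref{eq:master+levi-civita}, so that $H$, $\lambda$, $K$, $0$ and the fibre additions $+_p$, $+_{T(\pi)}$ become $T(H)$, $T(\lambda)$, $T(K)$, $T(0)$, $T(+_p)$ and $T(+_{T(\pi)})$, all evaluated at $(0v, cT(\lambda)w)$, and then to rewrite each summand so that a single $l$ factors out on the left. For the horizontal term I would use the identity 3.3(d) of \cite{MR3684725} to replace $T(\anc p)cT(\lambda)w$ by $T(\anc)\lambda pw$, then linearity of the anchor ($T(\anc)\lambda = l\anc$) to reach $l\anc pw$, and finally linearity of $H$. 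For the vertical term I would use the differential bundle lift axiom $T(\lambda)\lambda = l\lambda$ together with naturality of $0$ ($l0 = T(0)0$). Since $(l,0)$ is an additive bundle morphism and $l$ is natural, $l$ distributes over both $+_p$ and $+_{T(\pi)}$, so the collected expression is $l\bigl(H(v,\anc pw) +_p (\lambda Kw +_{T(\pi)} 0v)\bigr) = l\,\alpha(v,w)$, as desired.

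The main obstacle I anticipate is the bookkeeping of the two interleaved additive structures $+_p$ and $+_{T(\pi)}$ once $T$ is applied: I must be careful that $T$ sends each fibre sum into the correct tangent-bundle addition and that the linearity facts for $\anc$, $H$ and $l$ are invoked in the appropriate bundle, so that the single factor $l$ really does distribute over the whole expression rather than over just one summand. The individual simplifications are each short, but keeping the typing straight across the two nested bundle structures is where the care is needed.
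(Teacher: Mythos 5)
Your proposal is correct and follows essentially the same route as the paper: apply $T$ to \eqref{eq:master+levi-civita}, rewrite the horizontal term via 3.3(d) of \cite{MR3684725} and linearity of $\anc$ and $H$, rewrite the vertical term via $T(\lambda)\lambda = l\lambda$ and $l0 = T(0)0$, and factor a single $l$ out across both additions. The one detail to make explicit is that the vertical term also needs the connection identity (the paper again cites 3.3(d)) to convert $T(\lambda K)cT(\lambda)w$ into $T(\lambda)\lambda Kw$ before the lift axiom applies.
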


\begin{lemma}[Bundle morphism II]
	The $\alpha$ of \eqref{eq:master+levi-civita} is a linear bundle morphism $(\prolong, p\pi_1) \Rightarrow (T(A), T(\pi))$ over $\anc$.
	\begin{proof}
		It is routine to check that $\alpha$ is a bundle morphism.
		We now check that $\alpha$ is linear:
		\begin{align*}
			T&(\alpha)(\lambda v, lw)\\
			&= T(H)(\lambda v, T(\anc p)0w)+_{T(p)}(T(\lambda K)lw +_{T^2(\pi)} T(0)\lambda v)\\
			&= cT(\lambda)H(v, \anc p w)+_{T(p)}(T(\lambda)\lambda Kw +_{T^2(\pi)} cT(\lambda)0v)\tag{$H$ is linear}\\
			&= cT(\lambda)H(v, \anc p w)+_{T(p)}(cT(\lambda)\lambda K w+_{T^2(\pi)} cT(\lambda)0v)\\
			&= cT(\lambda)\alpha(v, w)
		\end{align*}
		as required.
	\end{proof}
\end{lemma}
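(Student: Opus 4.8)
The plan is to follow exactly the template used for the companion lemma (Bundle morphism I): reduce the claim to two verifications — that $(\alpha,\anc)$ is a morphism of the underlying additive bundles over $\anc$, and that $\alpha$ preserves the relevant lift — and then invoke 2.16 of \cite{MR3792842} so that preservation of fibrewise addition and zero follows automatically from linearity. Thus there are really only two things to establish: the commuting square $T(\pi)\alpha = \anc\, p\pi_1$, and the lift-preservation identity
\[
T(\alpha)(\lambda v, l w) = cT(\lambda)\,\alpha(v,w),
\]
where $\lambda\times l$ is the lift of the bundle $(\prolong, p\pi_1)$ described earlier and $cT(\lambda)$ is the lift of $(T(A), T(\pi))$ supplied by \ref{lem:T-of-diff-bundle}.

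For the commuting square I would use that $T(\pi)$ is additive over $+_p$: since $+\colon T\ts{p}{p}T \Rightarrow T$ is natural, $T(\pi)$ is a morphism of the $+_p$-bundles over $\pi$, so it distributes across the outer $+_p$ in \eqref{eq:master+levi-civita}. The horizontal summand gives $T(\pi)H(v,\anc pw) = \anc pw$ because $H$ is a section of $(p, T(\pi))$; the vertical summand $\lambda Kw +_{T(\pi)} 0v$ lies in a single $+_{T(\pi)}$-fibre, so applying $T(\pi)$ returns its basepoint $T(\pi)\lambda Kw$, and using $T(\pi)\lambda = 0\pi$ together with $\pi K = \pi p$ (from $(K,\pi)\colon p \Rightarrow \pi$) this equals $0\,\pi pw$. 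Hence $T(\pi)\alpha(v,w) = \anc pw +_p 0\,\pi pw = \anc pw = \anc\, p\pi_1(v,w)$, as required.

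The substance is the lift identity, and here I would push $T(\alpha)$ through the formula \eqref{eq:master+levi-civita} termwise. Evaluating the projections of the lifted argument gives $T(\pi_0)(\lambda v, l w) = \lambda v$ and $T(\pi_1)(\lambda v, l w) = l w$, so
\[
T(\alpha)(\lambda v, l w) = T(H)(\lambda v, T(\anc p) l w) +_{T(p)} \bigl(T(\lambda K) l w +_{T^2(\pi)} T(0)\lambda v\bigr).
\]
The key simplifications are: $T(\anc p) l w = 0\,\anc pw$, using the tangent-category axiom $T(p)l = 0p$ and naturality of $0$, so that the first argument of $T(H)$ is a $0$-section and linearity of the horizontal connection $H$ turns that term into $cT(\lambda)H(v,\anc pw)$; and $T(K) l w = \lambda Kw$, which is the $p$-linearity of $K$ coming from $(K,\pi)\colon p \Rightarrow \pi$ — note it is this linearity (with lift $l$) rather than the $T(\pi)$-linearity (with lift $cT(\lambda)$) that is needed. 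Combining $T(\lambda K)l w = T(\lambda)\lambda Kw$ with the differential-bundle axiom $T(\lambda)\lambda = l\lambda$ and $cl = l$ gives $cT(\lambda)\lambda Kw$, while the coherence $T(0)\lambda = cT(\lambda)0$ turns the last term into $cT(\lambda)0v$. Finally, since $cT(\lambda)$ is the lift of $(T(A), T(\pi))$ and hence an additive bundle morphism for both $+_p$ and $+_{T(\pi)}$, I can factor it out to obtain $cT(\lambda)\bigl(H(v,\anc pw) +_p (\lambda Kw +_{T(\pi)} 0v)\bigr) = cT(\lambda)\alpha(v,w)$.

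The main obstacle is purely organisational: keeping track of which additive structure ($+_p$ versus $+_{T(\pi)}$) and which lift ($l$, $\lambda$, or $cT(\lambda)$) each identity refers to, and in particular invoking the correct one of the two linearity properties of $K$. The genuinely delicate points are the collapse $T(\anc p)l w = 0\,\anc pw$ (so that the argument of $H$ appears in the $0$-sectioned form in which $H$'s linearity is most cleanly applied) and the coherence $T(0)\lambda = cT(\lambda)0$; everything else is bookkeeping of naturality squares and the differential-bundle and tangent-category axioms. The commuting-square verification I would present as routine once the additivity of $T(\pi)$ over $+_p$ has been noted.
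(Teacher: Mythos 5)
Your proposal is correct and follows essentially the same route as the paper: expand $T(\alpha)$ termwise through \eqref{eq:master+levi-civita}, collapse $T(\anc p)lw$ to a zero section so the $p^*(\pi)\Rightarrow T(\pi)$ linearity of $H$ applies, use the $(K,\pi)\colon p\Rightarrow\pi$ linearity of $K$ together with $T(\lambda)\lambda = l\lambda$ and $cl=l$, and factor out $cT(\lambda)$ by its additivity. The only differences are presentational — you spell out the bundle-morphism square and the additivity bookkeeping that the paper labels routine (and note, in passing, that ``first argument of $T(H)$'' should read ``second argument'') — so this matches the paper's own proof.
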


\begin{lemma}[Inv. algd. unit]
	The $\alpha$ of \eqref{eq:master+levi-civita} satisfies 
	$$\alpha(\xi\pi, \lambda) = \lambda$$
	\begin{proof}
		We check
		\begin{align*}
			\alpha(\xi\pi, \lambda)v &= H(\xi m, \anc p \lambda v)+_p(\lambda K\lambda v+_{T(\pi)} 0\xi m)\\
			&= H(\xi m, \anc \xi m)+_p \lambda v\\
			&= H(\xi m, 0 m)+_p \lambda v\\
			&= \lambda v \tag{$H$ linear}
		\end{align*}
		as required.
	\end{proof}
\end{lemma}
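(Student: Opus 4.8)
The plan is to substitute the inputs $(\xi\pi, \lambda)$ into the Levi-Civita form \eqref{eq:master+levi-civita} of $\alpha$ and to simplify each of the three resulting summands using the differential-bundle and connection axioms. Before doing so I would first confirm that the pair $(\xi\pi, \lambda)$ genuinely factors through the prolongation $\prolong$, that is, that $\anc\,\xi\pi = T(\pi)\lambda$. Both sides equal $0\pi$: on the left the anchor is a linear bundle morphism and hence preserves the zero section, so $\anc\xi = 0$; on the right the differential-bundle axiom $T(\pi)\lambda = 0\pi$ gives the same. This legitimises the expression $\alpha(\xi\pi, \lambda)$.

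Evaluating \eqref{eq:master+levi-civita} at $v' = \xi m$ and $w' = \lambda v$ (with $m = \pi v$) yields
\[\alpha(\xi m, \lambda v) = H(\xi m, \anc p \lambda v) +_p (\lambda K \lambda v +_{T(\pi)} 0\xi m),\]
and I would then dispatch the ingredients one at a time. First, $p\lambda = \xi\pi$ together with $\anc\xi = 0$ reduces the horizontal argument to $\anc p\lambda v = 0m$, so the first term becomes $H(\xi m, 0m)$. Second, since $K$ is a vertical connection it is a section of $\lambda$, whence $K\lambda = id$ and the vertical term collapses to $\lambda K\lambda v = \lambda v$. Third, naturality of the zero section $0\colon id \Rightarrow T$ along $\xi$ identifies $0\xi m = T(\xi)0m$, which is precisely the $+_{T(\pi)}$-neutral element in the fibre over $0m$; hence the inner sum simplifies to $\lambda v$.

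The one remaining step is to evaluate $H(\xi m, 0m) +_p \lambda v$, and here I would invoke the linearity of the horizontal connection: since $H$ is a linear bundle morphism, the horizontal lift of the zero tangent vector $0m$ is the $p$-zero at its base point, so $H(\xi m, 0m) = 0\xi m$. As this is the $+_p$-neutral element over $p(\lambda v) = \xi m$, adding it leaves $\lambda v$ unchanged, and we conclude $\alpha(\xi\pi, \lambda) = \lambda$. In a computation this short there is no serious obstacle; the only thing demanding care is the bookkeeping of the three distinct zeros --- the $p$-zero $0$, the $\pi$-zero $\xi$, and the $T(\pi)$-zero $T(\xi)$ --- and checking via naturality of $0$ that $0\xi m$ simultaneously serves as the neutral element for both $+_p$ and $+_{T(\pi)}$ in the relevant fibres.
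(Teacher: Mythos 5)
Your proof is correct and takes essentially the same route as the paper's: substitute $(\xi\pi,\lambda)$ into \eqref{eq:master+levi-civita}, use $p\lambda = \xi\pi$ and $\anc\xi = 0$ to reduce the horizontal argument, $K\lambda = id$ together with the neutrality of $0\xi m$ for $+_{T(\pi)}$ to collapse the vertical term to $\lambda v$, and linearity of $H$ to discard $H(\xi m, 0m)$. The additional bookkeeping you supply --- verifying that $(\xi\pi,\lambda)$ factors through $\prolong$ and identifying $0\xi m = T(\xi)0m$ as the relevant neutral elements --- is exactly what the paper leaves implicit.
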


\begin{lemma}[Inv. algd. inv.]
	The $\alpha$ of \eqref{eq:master+levi-civita} satisfies 
	$$\alpha(p\pi_1, \alpha) = \pi_1$$
	\begin{proof}
		Since
		\begin{equation*}
		K\alpha(v, w) = K(H(v, \anc pw)+_p (\lambda Kw +_{T(\pi)} 0v)) = Kw
		\end{equation*}
		we can use definition of $\alpha$ twice:
		\begin{align*}
			\alpha(pw, \alpha(v, w)) &= H(pw, \anc p \alpha(v, w))+_p (\lambda K \alpha(v, w) +_{T(\pi)} 0pw)\\
			&= H(pw, \anc v) +_p (\lambda K w +_{T(\pi)} 0pw)\\
			&= w \tag{full connection}
		\end{align*}
		as required.
	\end{proof}
\end{lemma}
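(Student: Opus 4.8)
The plan is to reduce the identity $\alpha(p\pi_1, \alpha) = \pi_1$ to the decomposition identity that defines the full connection $(K, H)$. Unwinding the notation, the claim asserts that for a generalised element $(v, w) \in \prolong$ (so that $\anc v = T(\pi) w$) we have $\alpha(pw, \alpha(v, w)) = w$. First I would note that this pair is a legitimate element of the prolongation: the (inv. algd. source) relation gives $T(\pi)\alpha(v, w) = \anc p w$, so $(pw, \alpha(v,w))$ satisfies the pullback compatibility condition and $\alpha$ may indeed be applied to it.

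The delicate step is to compute the vertical component $K\alpha(v, w)$ and show it equals $Kw$. Applying $K$ to the defining formula \eqref{eq:master+levi-civita} and using that $(K, \pi): p \Rightarrow \pi$ is additive for $+_p$, one splits $K\alpha(v,w)$ into the horizontal contribution $KH(v, \anc pw)$ and the vertical contribution $K(\lambda Kw +_{T(\pi)} 0v)$. The horizontal contribution collapses to the zero section $\xi\pi v$ because $KH = \xi\pi\pi_0$; for the vertical contribution one invokes that $(K, p): T(\pi) \Rightarrow \pi$ is additive for $+_{T(\pi)}$, together with $K\lambda = \mathrm{id}$ (that $K$ is a section of $\lambda$) and $K0 = \xi\pi$, to obtain $Kw$ plus a zero term. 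The main obstacle here is bookkeeping: keeping the two zero sections and the two fibrewise additions $+_p$ and $+_{T(\pi)}$ straight, and checking that the $\pi$-fibre basepoints match (which follows from $p\anc = \pi$ and $\pi K = p\,T(\pi)$). The clean outcome is $K\alpha(v, w) = Kw$.

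With this in hand I would expand $\alpha(pw, \alpha(v, w))$ via the definition. Its base argument requires $p\alpha(v, w)$, which is $v$ by the (inv. algd. 0) relation $p\alpha = \pi_0$, so $\anc p\alpha(v, w) = \anc v$; and its vertical argument is $K\alpha(v, w) = Kw$ from the previous step. This yields
\[
\alpha(pw, \alpha(v, w)) = H(pw, \anc v) +_p (\lambda K w +_{T(\pi)} 0 pw).
\]

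Finally I would recognise the right-hand side as the full connection decomposition applied to $w$. Since $(v, w)\in\prolong$ we have $\anc v = T(\pi)w$, so $H(pw, \anc v) = H(p, T(\pi))w$, and the whole expression becomes $\bigl(H(p, T(\pi)) +_p (\lambda K +_{T(\pi)} 0p)\bigr)w$, which equals $w$ by the identity $H(p, T(\pi)) +_p (\lambda K +_{T(\pi)} 0p) = id$ in the definition of a full connection. This closes the proof. The only genuine computation is the vertical-component identity of the second paragraph; the remaining steps are a substitution of the (inv. algd. 0) relation followed by a direct appeal to the connection decomposition.
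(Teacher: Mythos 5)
Your proof is correct and takes essentially the same route as the paper's: establish $K\alpha(v,w) = Kw$, substitute $p\alpha = \pi_0$ and the prolongation condition $\anc v = T(\pi)w$, and conclude from the full-connection identity $H(p, T(\pi)) +_p (\lambda K +_{T(\pi)} 0p) = id$. The only difference is that you spell out the additivity bookkeeping behind $K\alpha(v,w) = Kw$ (via $KH = \xi\pi\pi_0$, $K\lambda = id$, $K0 = \xi\pi$ and the two linearities of $K$), which the paper asserts in a single line.
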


\begin{lemma}[Inv. algd. target]
	The $\alpha$ of \eqref{eq:master-eqn} satisfies 
	$$T(\anc)\alpha = cT(\anc)\pi_1$$
	\begin{proof}
		We treat the $\alpha_L$ and $\alpha_R$ terms separately.
		On the one hand:
		\begin{align*}
			T(\anc)\alpha_L &= T(\anc)(T(\overline{v})\anc p w -_p(0v+_{T(\pi)}\lambda[\overline{pw}, \overline{v}]))\\
			&= T(\anc\overline{v})\anc p w -_p (T(\anc)0v+_{T(p)} T(\anc)\lambda [\overline{pw}, \overline{v}])\\
			&= T(\anc\overline{v})\anc p w -_p (0\anc v+_{T(p)} l [ \anc\overline{pw}, \anc\overline{v}])\tag{Leibniz law}\\
			&= T(\anc\overline{v})\anc p w -_p (cT(\anc\overline{pw})\anc v -_p T(\anc\overline{v})\anc p w)\tag{bracket of vector fields}\\
			&= cT(\anc\overline{pw})\anc v
		\end{align*}
		and on the other hand:
		\begin{align*}
			T(\anc)\alpha_R &= T(\anc)\left(T(\xi)\anc p w -_pT(\xi)\anc v+_p(w-_{T(\pi)}T(\overline{pw})\anc v)\right)\\
			&= T(\anc\xi)\anc p w -_p T(\anc\xi)\anc v+_p(T(\anc)w-_{T(p)}T(\anc\overline{pw})\anc v)\\
			&= T(0)\anc p w -_p T(0)\anc v+_p(T(\anc)w-_{T(p)}T(\anc\overline{pw})\anc v)
		\end{align*}
		Now we use a connection $\hat{p}$ on $T(M)$ to compare $T(\anc)\alpha(v, w)$ and $cT(\anc)w$.
		First:
		\begin{align*}
			pT(\anc)\alpha(v, w) &= pT(\anc)\alpha_L(v, w)+_T(p) p T(\anc)\alpha_R(v, w) \\
			&= \anc v +_p 0m = T(\pi)w = pcT(\anc) w
		\end{align*}
		Second:
		\begin{align*}
			T(p)T(\anc)\alpha(v, w) = T(\pi)\alpha_L(v, w) = \anc pw = T(p)cT(\anc)w
		\end{align*}
		Third:
		\begin{align*}
			\hat{p}T(\anc)\alpha(v, w) &= \hat{p}T(\anc)\alpha_L+_p \hat{p}T(\anc)\alpha_R\\
			&= \hat{p}cT(\anc\overline{pw})\anc v +_p \hat{p}T(\anc)w-_p \hat{p}T(\anc\overline{pw})\anc v\\
			&= \hat{p}T(\anc)w = \hat{p}cT(\anc)w
		\end{align*}
		and so $cT(\anc)w = T(\anc)\alpha(v, w)$ because their images under the isomorphism $(p, T(p), \hat{p})$ are the same.
	\end{proof}
\end{lemma}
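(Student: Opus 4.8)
The plan is to verify the equality of the two parallel arrows $T(\anc)\alpha$ and $cT(\anc)\pi_1$ into $T^2(M)$ by comparing their images under a complete system of projections out of the second tangent bundle. Since $M$ need not be a differential object, the appropriate such system is the decomposition furnished by a connection on $T(M)$: the triple $(p, T(p), \hat p)$ of base projection, tangent-of-projection, and vertical projection (cf. \ref{prop:decomp-full-connection} and the biproduct structure of \ref{def:differential-object}) is an isomorphism, so two arrows into $T^2(M)$ agree as soon as their composites with $p$, $T(p)$ and $\hat p$ agree. The goal therefore reduces to three component identities, and the reward of this route is that the target $cT(\anc)w$ is manifestly independent of the auxiliary section choices $\overline v,\overline{pw}$ that enter the defining formula \eqref{eq:master-eqn}, even though the intermediate expressions are not.

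First I would push $T(\anc)$ through the $+_{T\pi}$-sum defining $\alpha$. Because the anchor $\anc$ is linear, $T(\anc)$ carries $+_{T\pi}$ to $+_{Tp}$ on $T^2(M)$, so $T(\anc)\alpha = T(\anc)\alpha_L +_{Tp} T(\anc)\alpha_R$ and the summands can be handled separately. Expanding $T(\anc)\alpha_L$ rests on three ingredients: linearity of $\anc$, to commute $T(\anc)$ past the lift and zero terms; the bracket-preservation (Leibniz/morphism) property of the anchor, in the form $T(\anc)\lambda[\overline{pw},\overline v] = l[\anc\overline{pw},\anc\overline v]$, which converts an algebroid bracket into a genuine vector-field bracket on $M$; and the explicit formula $[\mathcal X,\mathcal Y]_M = cT(\mathcal Y)\mathcal X \sd T(\mathcal X)\mathcal Y$ recorded in the affine-structure remark. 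Together these collapse $T(\anc)\alpha_L$ to the single term $cT(\anc\overline{pw})\anc v$.

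The term $T(\anc)\alpha_R$ is more direct: the relation $\anc\xi = 0$ turns the two $\xi$-summands into zero-section terms, and the essential surviving content is $T(\anc)w$ together with a section-dependent correction $-_p T(\anc\overline{pw})\anc v$. With both pieces in hand I would recombine and check the three components against $cT(\anc)w$. The $p$- and $T(p)$-components should match immediately from the source/target bookkeeping (e.g.\ $p\,cT(\anc) = T(p)T(\anc)$), so the real content lives in the $\hat p$-component, where the section-dependent terms $cT(\anc\overline{pw})\anc v$ coming from $\alpha_L$ and $-T(\anc\overline{pw})\anc v$ coming from $\alpha_R$ must annihilate, leaving exactly $\hat p T(\anc)w = \hat p\, cT(\anc)w$. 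Agreement of all three components then yields the identity through the isomorphism $(p, T(p), \hat p)$.

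The hardest step will be the bracket conversion in the middle: correctly replacing the algebroid bracket by a vector-field bracket and then rewriting that bracket via the $c$/strong-difference formula, all while tracking which of the two additions ($+_p$ versus $+_{Tp}$) and which iterate of the canonical flip $c$ is active at each stage. Intimately tied to this is confirming that the choices of local sections $\overline v$ and $\overline{pw}$ genuinely cancel; their cancellation is guaranteed in principle because \eqref{eq:master-eqn} is section-independent, but it must be exhibited concretely in the $\hat p$-component, which is where the substance of the computation resides.
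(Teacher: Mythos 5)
Your proposal follows the paper's own proof essentially step for step: the same splitting $T(\anc)\alpha = T(\anc)\alpha_L +_{Tp} T(\anc)\alpha_R$, the same collapse of $T(\anc)\alpha_L$ to $cT(\anc\overline{pw})\anc v$ via anchor linearity, the Leibniz-law bracket conversion and the vector-field bracket formula, the same use of $\anc\xi = 0$ for $\alpha_R$, and the same final comparison of the three components under the isomorphism $(p, T(p), \hat p)$, where the section-dependent terms cancel in the $\hat p$-component. No gaps; this is the paper's argument.
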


\begin{lemma}[Inv. algd. flip]\label{lem:flip-for-lie-algd}
	The $\sigma$ of \eqref{eq:simple-sigma} satisfies
	\begin{equation*}
		(\sigma\times c)(id\times T(\sigma))(\sigma\times c) = (id\times T(\sigma))(\sigma\times c)(id\times T(\sigma))
	\end{equation*}
	\begin{proof}
		Using \ref{prop:decomp-full-connection} the object $\prolong\ts{T(\anc)}{T^2(\pi)}T^2(A)$ is isomorphic to $\mathcal L^2(A)\cong A\ts{\pi}{\pi}A\ts{\pi}{\pi}A\ts{\pi}{\pi}A\ts{\pi pi}A\ts{\pi}{\pi}A\ts{\pi}{\pi}A$.
		Next we convert the arrows $\sigma \times c$ and $id\times T(\sigma)$ into elements of the symmetric group $S_7$ corresponding to how they permute the factors of $\mathcal L^2(A)$.
		Since
		\begin{equation*}
		(\sigma\times c)(a_0, a_1, a_2, a_3, a_4, a_5, a_6) = (a_1, a_0, a_2, a_3, a_5, a_4, a_6)
		\end{equation*}
		the arrow $\sigma\times c$ corresponds to the permutation $(01)(45)$.
		Similarly since
		\begin{equation*}
		(id\times T(\sigma))(a_0, a_1, a_2, a_3, a_4, a_5, a_6) = (a_0, a_3, a_4, a_1, a_2, a_5, a_6)
		\end{equation*}
		the arrow $id\times T(\sigma)$ corresponds to the permutation $(13)(24)$.
		Therefore we check
		\begin{align*}
			(\sigma\times c)(id\times T(\sigma))(\sigma\times c) &\leftrightarrow (01)(45)(13)(24)(01)(45)\\
			&= (01)(13)(01)(45)(24)(45)\\
			&= (03)(25)\\
			&= (13)(01)(13)(24)(45)(24)\\
			&= (13)(24)(01)(45)(13)(24)\\
			&\leftrightarrow (id\times T(\sigma))(\sigma\times c)(id\times T(\sigma))
		\end{align*}
		and conclude that the flip axiom holds.
	\end{proof}
\end{lemma}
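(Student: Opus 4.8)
The plan is to linearise the whole configuration using a full connection so that the Yang--Baxter relation reduces to a computation in a finite symmetric group. By \ref{prop:decomp-full-connection} a choice of full connection $(K,H)$ on $\pi$ identifies $T(A)$ with the triple fibre product $A\ts{\pi}{p}T(M)\ts{p}{\pi}A$, the three factors recording the base point, the anchor direction, and the vertical part. I would then apply this decomposition a second time --- to $T^2(A)$ and to the prolongation object $\prolong\ts{T(\anc)}{T^2(\pi)}T^2(A)$ that is the domain of the flip axiom --- so as to identify that domain with a sevenfold fibre power of $A$ over $M$, which I denote $\mathcal L^2(A)$ and whose factors I index $a_0,\dots,a_6$. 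The payoff of these coordinates is that, by \eqref{eq:simple-sigma}, $\sigma$ is the coordinate swap $(\pi_1,\pi_0,\pi_2)$, and correspondingly both the canonical flip $c$ and the differentiated swap $T(\sigma)$ will act on $\mathcal L^2(A)$ simply by permuting the seven factors.

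First I would determine, in this sevenfold decomposition, the permutation of $\{0,\dots,6\}$ induced by each of the two composite arrows $\sigma\times c$ and $id\times T(\sigma)$. For $\sigma\times c$ this means combining the transposition that $\sigma$ performs on the first block of factors with the transposition that the canonical flip $c$ performs on the corresponding doubled coordinates of the second block; for $id\times T(\sigma)$ one differentiates the coordinate swap $\sigma$ and reads off which pairs of the seven factors get exchanged. I expect these to come out as the two (commuting-transposition) permutations $(01)(45)$ and $(13)(24)$ respectively.

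Once both generators are expressed as elements of $S_7$, each side of the flip axiom becomes an explicit word in these two permutations, and the remaining task is the purely mechanical check that $(01)(45)(13)(24)(01)(45)$ and $(13)(24)(01)(45)(13)(24)$ are the same element of $S_7$ --- I anticipate both simplify to $(03)(25)$. Because the identification with $\mathcal L^2(A)$ is an isomorphism and both sides of the axiom are built from the same two arrows, equality of the induced permutations yields equality of the arrows.

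The genuinely delicate part, and the step where I expect the real work to lie, is the second one: correctly reading off how $c$ and $T(\sigma)$ permute the seven factors of the second-order decomposition. This requires careful bookkeeping of the interaction of the canonical flip with the connection-induced coordinates (using the tangent-category axioms relating $c$ and $l$) and of differentiating the first-order swap $\sigma$. After the two permutations are pinned down, the group-theoretic verification is entirely routine.
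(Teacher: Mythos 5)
Your proposal is correct and follows essentially the same route as the paper's own proof: both use \ref{prop:decomp-full-connection} to identify the domain of the flip axiom with the sevenfold fibre power $\mathcal L^2(A)$, read off $\sigma\times c$ and $id\times T(\sigma)$ as the permutations $(01)(45)$ and $(13)(24)$, and verify that both sides of the Yang--Baxter relation reduce to $(03)(25)$ in $S_7$. The coordinate bookkeeping you flag as the delicate step is likewise left implicit in the paper, which simply states the two coordinate formulas and proceeds with the group computation.
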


\begin{corollary}
	Every Lie algebroid is an involution algebroid with flip map $\alpha$ given by \eqref{eq:master-eqn}.
\end{corollary}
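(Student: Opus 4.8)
The plan is to assemble the corollary directly from the lemmas of this subsection, each of which verifies one clause of Definition \ref{def:inv-algd} for the map $\alpha$. The first point to settle is that these lemmas are phrased using three different but provably equivalent expressions for $\alpha$: the defining equation \eqref{eq:master-eqn} (used in \emph{Inv. algd. target}), its Levi-Civita specialisation \eqref{eq:master+levi-civita} (used in \emph{Bundle morphism I}, \emph{Bundle morphism II}, \emph{Inv. algd. unit} and \emph{Inv. algd. inv.}), and the decomposed form \eqref{eq:simple-sigma} for $\sigma = (p\pi_1, \alpha)$ (used in \ref{lem:flip-for-lie-algd}). Since Section \ref{subsec:comparing_lie_brackets_to_involutions} shows that \eqref{eq:master-eqn} is independent both of the choice of connection and of the choice of sections extending $v$ and $pw$, all three expressions define one and the same arrow $\alpha:\prolong \rightarrow T(A)$, and I would open the proof by recording this.

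With that settled I would run through the clauses of Definition \ref{def:inv-algd} in turn. The two linearity requirements — that $(\alpha, id):(\prolong, \pi_0) \Rightarrow (T(A), p)$ and $(\alpha, \anc):(\prolong, p\pi_1) \Rightarrow (T(A), T(\pi))$ are linear bundle morphisms, with respect to the two differential-bundle structures constructed in Section \ref{sub:anchored_bundles_and_prolongations} — are exactly the content of \emph{Bundle morphism I} and \emph{Bundle morphism II}. The three equational axioms (inv. algd. unit), (inv. algd. target) and (inv. algd. inv.) are the three lemmas of the same names. Finally (inv. algd. flip) follows from \ref{lem:flip-for-lie-algd}, after translating back through the rephrasing in Remark \ref{rem:yang-baxter}, which shows that the Yang-Baxter identity for $\sigma$ is equivalent to the original flip axiom for $\alpha$.

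The genuinely substantial work is \ref{lem:flip-for-lie-algd}, but that is already done; for the corollary itself I expect no real obstacle beyond bookkeeping. The one point worth spelling out is that the domain of $\alpha$ really is the prolongation $\prolong$ carrying its two bundle structures, so that the typing constraints $\anc\pi_0 = T(\pi)\pi_1$ and $cT^2(\pi)\pi_2 = T(\anc)\pi_1$ demanded inside the lemmas hold on the nose; this is immediate from the anchored-bundle structure of $\pi$. Once the equivalence of the three forms of $\alpha$ is recorded, the corollary follows: $\alpha$ given by \eqref{eq:master-eqn} satisfies every clause of Definition \ref{def:inv-algd}, and hence every Lie algebroid is an involution algebroid.
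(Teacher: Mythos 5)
Your proposal is correct and takes essentially the same route as the paper, which states the corollary as an immediate assembly of the preceding lemmas (Bundle morphism I and II, unit, inv., target, and \ref{lem:flip-for-lie-algd}), glued together by the equivalences of \eqref{eq:master-eqn}, \eqref{eq:master+levi-civita} and \eqref{eq:simple-sigma} worked out in \ref{subsec:comparing_lie_brackets_to_involutions}. The bookkeeping you make explicit --- which lemma uses which form of $\alpha$, and the translation between the flip axiom and the Yang--Baxter identity via \ref{rem:yang-baxter} --- is precisely what the paper leaves implicit.
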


% subsection the_involution_algebroid_of_a_lie_algebroid (end)

\subsubsection{Lie bracket of an involution algebroid}
\label{subsubsec:the_lie_bracket_of_an_involution_algebroid}

In this section we work in a tangent category without any specified unit
object $R$.
We fix an involution algebroid $(\pi: A \to M, \anc, \alpha)$ and describe a Lie bracket on the sections $\Gamma(\pi)$ of $\pi$.
This Lie bracket is induced from the Lie bracket of vector fields on the total space $A$ of the involution algebroid.
We begin by using the involution to define a monic additive map $\Gamma(\pi) \to \chi(A)$ where $\chi(A)$ is the set of vector fields on $A$.

\begin{lemma}
	The function $\alpha_{(-)} : \Gamma(\pi) \to \chi(A)$ given by:
  \[
    \alpha_{X} = \alpha ( id, T(X)\anc )
  \]
  is well-defined.
  \begin{proof}
  Indeed $p \alpha ( id, T(X)\anc ) = id$ by (inv. algd. 0).
  \end{proof}
\end{lemma}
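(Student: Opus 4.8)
The plan is to verify two things: that the pairing $(id, T(X)\anc)$ really does factor through the prolongation $\prolong$ (so that $\alpha$ may legitimately be applied to it), and that the resulting arrow is a section of $p\colon T(A)\to A$, i.e.\ an honest vector field on $A$.

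First I would check the typing. Recall that $\prolong$ is the pullback of $\anc\colon A\to T(M)$ along $T(\pi)\colon T(A)\to T(M)$, so giving a map $A\to\prolong$ amounts to giving a pair of legs $f_0\colon A\to A$ and $f_1\colon A\to T(A)$ satisfying $\anc f_0 = T(\pi) f_1$. Here $f_0 = id$ and $f_1 = T(X)\anc$, so the required compatibility reads $\anc = T(\pi)\,T(X)\,\anc$. Since $X$ is a section of $\pi$ we have $\pi X = id_M$, and functoriality of $T$ then gives $T(\pi)\,T(X) = T(\pi X) = T(id_M) = id$, whence $T(\pi)\,T(X)\,\anc = \anc$ as needed. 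Thus $(id, T(X)\anc)\colon A\to\prolong$ is a legitimate arrow, and $\alpha_X = \alpha(id, T(X)\anc)\colon A\to T(A)$ is well-typed.

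Second I would confirm that $\alpha_X$ is a section of $p$, which is what places it in $\chi(A)$. By the remark on (inv. algd. source and projection), the fact that $(\alpha, id)\colon(\prolong,\pi_0)\Rightarrow(T(A), p)$ is a bundle morphism yields $p\alpha = \pi_0$, the identity labelled (inv. algd. 0). Precomposing with the pairing then gives $p\,\alpha_X = p\,\alpha(id, T(X)\anc) = \pi_0(id, T(X)\anc) = id_A$, so $\alpha_X\in\chi(A)$ and $\alpha_{(-)}$ indeed lands in the claimed codomain.

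I expect no real obstacle here: the only points worth flagging are that $p$ recovers the source (base-point) argument of $\alpha$ — which is exactly the content of (inv. algd. 0) — and that the pairing is well-typed precisely because $X$ is a section of $\pi$. Both are immediate from the data already established, so the entire argument reduces to these two one-line observations.
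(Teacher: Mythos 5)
Your proof is correct and takes essentially the same route as the paper, whose entire argument is the appeal to (inv.\ algd.\ 0) giving $p\,\alpha(id, T(X)\anc) = \pi_0(id, T(X)\anc) = id$. Your additional verification that $(id, T(X)\anc)$ genuinely factors through $\prolong$ — via $T(\pi)T(X) = T(\pi X) = id$ since $X$ is a section of $\pi$ — is a routine typing check that the paper leaves implicit, and it is carried out correctly.
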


\begin{lemma}\label{lem:transfer-monic}
  The morphism $\alpha_{(-)}$ is additive and monic.
\end{lemma}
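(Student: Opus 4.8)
The plan is to treat the two assertions separately: additivity will come essentially for free from the bundle structure, while injectivity is the real content.

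For additivity, I would exploit that $(\alpha, id)\colon (\prolong, \pi_0) \Rightarrow (T(A), p)$ is a \emph{linear} bundle morphism, so it preserves the fibrewise zero and the fibrewise addition. Under the $\pi_0$-bundle structure described in the remark ``Bundle with $\pi_0$'', the zero section is $(id, T(\xi)\anc)$ and the fibrewise addition adds the $T(A)$-component via $+_{T(\pi)} = T(+_\pi)$. First I would record that $\alpha_\xi = \alpha\circ(id, T(\xi)\anc) = 0$, since $\alpha$ carries the zero section of $\pi_0$ to the zero section of $p$. Then, using $T(X +_\pi Y) = +_{T(\pi)}(T(X), T(Y))$ (which follows from $X +_\pi Y = {+_\pi}\langle X, Y\rangle$ and functoriality of $T$), I would check that $(id, T(X +_\pi Y)\anc)$ equals $(id, T(X)\anc) +_{\pi_0} (id, T(Y)\anc)$ in the fibres of $\pi_0$; applying $\alpha$ and using preservation of $+_{\pi_0}$ yields $\alpha_{X +_\pi Y} = \alpha_X +_p \alpha_Y$. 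This exhibits $\alpha_{(-)}$ as a morphism of commutative groups.

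For monic, since $\alpha_{(-)}$ is additive and we work with negatives, it suffices to show the kernel is trivial, i.e. $\alpha_X = 0 \Rightarrow X = \xi$. The device I would use is the involution $\sigma = (p\pi_1, \alpha)$ of \ref{rem:yang-baxter}, which is invertible by (inv. algd. inv.). The key computation is $\sigma(id, T(X)\anc) = (X\pi, \alpha_X)$: the first component is $p\,T(X)\anc = X\,p\,\anc = X\pi$ by naturality of $p$ and $p\anc = \pi$, and the second is $\alpha_X$ by definition. Restricting to the zero section, where $\anc\xi = 0$ and $T(X)0 = 0X$, gives $\sigma(\xi, 0X) = (X, \alpha_X\xi)$, so $X$ appears as the first coordinate $\pi_0\,\sigma(\xi, 0X)$. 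The genuine difficulty is to extract $X$ from $\alpha_X$ \emph{alone} rather than from $\sigma$ applied to an input that already involves $X$, and here I would combine the involution with the equalizer property of the differential bundle $\pi$ (which identifies the fibre datum $X$ with a uniquely determined vertical tangent vector, i.e. with $\lambda X$), together with the unit axiom $\alpha(\xi\pi, \lambda) = \lambda$.

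The main obstacle is precisely this last reconstruction step. The source and target relations are not enough on their own: from (inv. algd. source) we only get $T(\pi)\alpha_X = \anc X\pi$, which recovers the section only up to the kernel of the anchor, so the ``purely vertical'' part of $X$ must be recovered from the involution itself. I expect the honest argument to hinge on using (inv. algd. inv.) and the unit axiom to pin down $\lambda X$, and then monicity of the lift $\lambda$ to conclude; this is the step I anticipate will demand the most care, and it is where the flip and involution axioms do the essential work beyond the bundle-morphism formalism.
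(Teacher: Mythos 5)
Your additivity argument is correct and is essentially identical to the paper's: both use that $(\alpha, id)\colon (\prolong, \pi_0) \Rightarrow (T(A), p)$ is linear (hence preserves fibrewise zero and addition) together with $T(X +_\pi Y) = T(X) +_{T\pi} T(Y)$. The problem is the monicity half, which in your write-up is a plan rather than a proof: the decisive step --- recovering from $\alpha_X$ the part of $X$ killed by the anchor, which you propose to extract via (inv.\ algd.\ inv.), the unit axiom and monicity of $\lambda$ --- is exactly the step you defer, and it cannot be carried out, because the statement is false without further hypotheses. Take any Lie algebra with nontrivial centre, viewed as an involution algebra with the flip of \eqref{eq:master+diff-obj}, $\alpha(v, w_H, w_V) = (v, w_V + [v, w_H])$. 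Since $\anc$ factors through $T(1)$, one has $T(X)\anc v = 0_X$, so $\alpha_X v = \alpha(v, X, 0) = (v, [v, X])$; hence $\alpha_X = \alpha_Y$ exactly when $X - Y$ is central, and for an abelian Lie algebra $\alpha_{(-)}$ sends \emph{every} section to the zero vector field. So your diagnosis --- that (inv.\ algd.\ source) and (inv.\ algd.\ target) only pin down $X$ modulo the kernel of the anchor, and that the ``vertical'' part must come from the involution --- is exactly right, except that the involution cannot supply it: in the counterexample $\alpha_X$ simply does not determine $X$, so no combination of the axioms can close your gap.

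It is worth seeing that the paper's own proof breaks at precisely the point you flagged. It invokes the involution axiom in the form $\alpha(p\pi_1, \alpha)(id, T(X)\anc) = \pi_1(id, T(X)\anc) = T(X)\anc$, and its first implication asserts that $\alpha_X = \alpha_Y$ forces $\alpha(p\pi_1, \alpha)(id, T(X)\anc) = \alpha(p\pi_1, \alpha)(id, T(Y)\anc)$. But the left-hand side, written out, is $\alpha\bigl(pT(X)\anc, \alpha_X\bigr) = \alpha(X\pi, \alpha_X)$: its first argument $X\pi$ is not determined by the vector field $\alpha_X$, so identifying the two sides presupposes $X\pi = Y\pi$, which is the conclusion being proved. (In the abelian example, $\alpha_X = \alpha_Y$ for all $X, Y$, yet $\alpha(X\pi, \alpha_X) = 0_X \neq 0_Y = \alpha(Y\pi, \alpha_Y)$.) A true version of the lemma needs an extra hypothesis: for instance if $\anc$ is monic, then (inv.\ algd.\ source) gives $T(\pi)\alpha_X = \anc X\pi$, so $\alpha_X = \alpha_Y$ yields $\anc X\pi = \anc Y\pi$, hence $X\pi = Y\pi$ and $X = Y$ since $\pi$ is (split) epi; alternatively one must assume that sections with zero anchor and trivial adjoint action vanish. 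Without some such assumption, both your proposed route and the paper's argument fail, and the later appeal to monicity (in the proof of the Jacobi identity) needs to be rerouted.
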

\begin{proof}
  The map preserves addition:
  \begin{align*}
    \alpha_{X+_{\pi}Y}v
    &= \alpha (v , T(X +_\pi Y)\anc v) \\
    &= \alpha (v, (T(X)\anc v) +_{T\pi} (T(Y)\anc v)) \\
    &= \alpha (v, T(X)\anc v) +_p \alpha (v, T(Y)\anc v)
    & \tag{$\alpha$ linear} \\
    &= \alpha_X +_p \alpha_Y
  \end{align*}
  and zero:
  \begin{align*}
    \alpha_\xi v
    &= \alpha(v, T(\xi)\anc v) \\
    &= \alpha (id, T(\xi)) (v, \anc v) \\
    &= 0v & \tag{$\alpha$ linear}
  \end{align*}
  To show the map is monic let $X,Y\in \Gamma(\pi)$ satisfying $\alpha_X = \alpha_Y$. Then:
  \begin{align*}
  	\alpha(p\pi_1, \alpha_X ) &= \alpha(p\pi_1, \alpha_Y )\\
  	\implies \alpha ( p \pi_1 , \alpha ) ( id, T(X) \anc) &= \alpha ( p \pi_1 , \alpha ) ( id, T(Y) \anc)\\
  	\implies T(X)\anc v &= T(Y)\anc v \tag{inv. algd. inv.}\\
  	\implies pT(X)\anc &= pT(Y)\anc\\
  	\implies X\pi = Y\pi
  \end{align*}
  and so $X=Y$ because $\pi$ is an epimorphism.
\end{proof}
We record some results on the form of $\alpha_X$ and how it interacts with the tangent functor and
canonical flip on $T^2A$.
\begin{lemma}\label{lemma:alpha-comp}
  If $X,Y \in \Gamma(\pi)$ then
  $$cT(\alpha_Y)\alpha_Xv = T(\alpha)(\alpha_Yv, cT(\alpha_Y)T(X)\anc v)$$
\begin{proof}
	We compute
  \begin{align*}
    & cT(\alpha_Y)\alpha_X v\\
    =& c T(\alpha)(\alpha_X v, T^2(Y)T(\anc)\alpha_X v) \\
    =& c T(\alpha)(\alpha(v, T(X)\anc v), T^2(Y)T(\anc)\alpha_X v) \\
    =& T(\alpha)(\alpha(v, pT^2(Y)T(\anc)\alpha_X v),
      c T(\alpha) (T(X)\anc v, cT^2(Y)T(\anc)\alpha_X v))
     & \tag{inv. algd. flip} \\ % branch
    =& T(\alpha)(\alpha(v, T(Y)\anc p \alpha_X v),   %begin branch 1
       c T(\alpha) (T(X)\anc v, cT^2(Y)T(\anc)\alpha_X v)) 
       & \tag{naturality $p$}
    \\=& T(\alpha)(\alpha(v, T(Y)\anc v),
         c T(\alpha) (T(X)\anc v, cT^2(Y)T(\anc)\alpha_X v))
         & \tag{bundle morphism}
    \\=& T(\alpha)(\alpha_Yv,
         c T(\alpha) (T(X)\anc v, cT^2(Y)T(\anc)\alpha_X v))
     & \tag{definition} % end branch 1
    \\=& T(\alpha)(\alpha_Yv,                                 %begin branch 2
         cT(\alpha)(T(X)\anc v, T^2(Y)cT(\anc)\alpha(v, T(X)\anc v)))
     & \tag{naturality $c$}
    \\=& T(\alpha)(\alpha_Yv,
         cT(\alpha)(T(X)\anc v, T^2(Y)ccT(\anc)T(X)\anc v))
     &\tag{inv. algd. target}
    \\=& T(\alpha)(\alpha_Yv,
       cT(\alpha)( 1, T^2(Y)T(\anc) ) T(X) \anc v)
    \\=& T(\alpha)(\alpha_Yv,  cT(\alpha_Y)T(X)\anc v)
  \end{align*}
  as required.
\end{proof}
\end{lemma}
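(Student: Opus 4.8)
The plan is to collapse the whole identity to a single application of the (inv. algd. flip) axiom, after rewriting everything in terms of the raw involution $\alpha$ rather than the induced vector fields $\alpha_X$ and $\alpha_Y$. First I would unfold $\alpha_Y = \alpha(id, T(Y)\anc)$, which gives $T(\alpha_Y) = T(\alpha)(id, T^2(Y)T(\anc))$, and $\alpha_X v = \alpha(v, T(X)\anc v)$. Substituting these turns the left-hand side into
\[
cT(\alpha)\bigl(\alpha(v,\, T(X)\anc v),\; T^2(Y)T(\anc)\alpha_X v\bigr),
\]
which is exactly the shape $c\,T(\alpha)(\alpha(\pi_0,\pi_1),\pi_2)$ controlled by (inv. algd. flip), with $\pi_0 = v$, $\pi_1 = T(X)\anc v$ and $\pi_2 = T^2(Y)T(\anc)\alpha_X v$. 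Since the axiom reads $T(\alpha)(\alpha(\pi_0,\pi_1),\pi_2) = c\,T(\alpha)(\alpha(\pi_0, p\pi_2), cT(\alpha)(\pi_1, c\pi_2))$, applying $c$ to both sides and using $cc = id$ rewrites the left-hand side as $T(\alpha)(\alpha(v, p\pi_2),\, cT(\alpha)(T(X)\anc v,\, c\pi_2))$.

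The remaining work is then to simplify the two arguments separately so that they become $\alpha_Y v$ and $cT(\alpha_Y)T(X)\anc v$. For the base argument the goal is $p\pi_2 = T(Y)\anc v$: naturality of $p$ slides it past $T^2(Y)$ and $T(\anc)$ to yield $T(Y)\anc\, p\,\alpha_X v$, and $p\alpha_X v = v$ by (inv. algd. 0), so $\alpha(v, p\pi_2)$ collapses to $\alpha(v, T(Y)\anc v) = \alpha_Y v$. For the fibre argument I would push $c$ inward using naturality of the canonical flip, $cT^2(Y) = T^2(Y)c$, and then invoke (inv. algd. target) in the form $T(\anc)\alpha = cT(\anc)\pi_1$ to cancel a pair of flips; this rewrites $c\pi_2$ as $T^2(Y)T(\anc)T(X)\anc v$, after which $cT(\alpha)(T(X)\anc v,\, T^2(Y)T(\anc)T(X)\anc v)$ regroups as $cT(\alpha_Y)T(X)\anc v$, matching the second slot on the right-hand side.

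Before any of this I would verify well-typedness, since the flip axiom is only stated for pairs lying in $\prolong$ and $T(\prolong)$: one must check the compatibility relations $\anc\pi_0 = T(\pi)\pi_1$ and $cT^2(\pi)\pi_2 = T(\anc)\pi_1$ for the chosen $\pi_0,\pi_1,\pi_2$, which follow from the bundle-morphism conditions on $\alpha$ together with (inv. algd. target). I expect the main obstacle to be bookkeeping rather than any single hard idea: matching the composite precisely to the variable placement in (inv. algd. flip) and then tracking the several occurrences of $c$ through the repeated appeals to naturality of $p$ and $c$ and to the target axiom. Each move is elementary, but the identity only closes if every flip and every projection is accounted for exactly, and I would deliberately avoid introducing a connection or coordinates since the statement is purely about the structure maps.
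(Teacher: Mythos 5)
Your proposal is correct and follows essentially the same route as the paper's proof: unfold $\alpha_X$ and $T(\alpha_Y)$, apply (inv.\ algd.\ flip) with exactly the same assignment $\pi_0 = v$, $\pi_1 = T(X)\anc v$, $\pi_2 = T^2(Y)T(\anc)\alpha_X v$, then simplify the base slot via naturality of $p$ and $p\alpha = \pi_0$, and the fibre slot via naturality of $c$ together with (inv.\ algd.\ target) and $cc = id$. The explicit well-typedness check for the flip axiom is a sound addition the paper leaves implicit, but it does not change the argument.
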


Now we show that \eqref{eq:master-bracket} defines a Lie bracket on the sections of an involution algebroid $(\pi, \anc, \alpha)$.
Recall that \eqref{eq:master-bracket} is:
\begin{equation}\label{eq:lie-bracket-of-algd}
    [X,Y]_\alpha :=
    \alpha_YX\sd T(X)\anc Y 
\end{equation}
where we have used the notation of \ref{sub:affine_structures}.
\begin{lemma}
  The Lie bracket of \eqref{eq:lie-bracket-of-algd} is well defined.
\begin{proof}
  We check that
  \begin{align*}
    T(\pi)\alpha_YX &= T(\pi)\alpha(X, T(Y)\anc X)\\
    &= \anc pT(Y)\anc X \tag{inv. algd. source}\\
    &= \anc Y \tag{$p\anc = \pi$}
  \end{align*}
  as required.
\end{proof}
\end{lemma}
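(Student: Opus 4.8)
The plan is to unwind the strong difference $\sd$ appearing in \eqref{eq:lie-bracket-of-algd} and check that its two arguments actually lie in its domain. Recall the affine structure on $T(A)$ described in \ref{sub:affine_structures}: the tangent bundle of the differential bundle $\pi$ is an affine bundle over $q = (p, T(\pi)): T(A) \to A\ts{\pi}{p}T(M)$, and the strong difference $a \sd b$ is defined exactly when $q(a) = q(b)$, i.e.\ when $p(a) = p(b)$ and $T(\pi)(a) = T(\pi)(b)$. So it suffices to verify these two agreements for $a = \alpha_Y X$ and $b = T(X)\anc Y$.

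For the $p$-component I would show that both arrows project to $X$. On one side $p\,\alpha_Y X = X$, since $\alpha_Y = \alpha(\mathrm{id}, T(Y)\anc)$ is a vector field on $A$ by the first lemma of this subsection (equivalently by (inv. algd. 0), which gives $p\alpha = \pi_0$). On the other side $p\,T(X)\anc Y = X\,p\anc Y = X\pi Y = X$, using naturality of $p$ so that $pT(X) = Xp$, the fact that $\anc$ is a bundle morphism so that $p\anc = \pi$, and that $Y$ is a section so that $\pi Y = \mathrm{id}$. For the $T(\pi)$-component I would show that both arrows equal $\anc Y$. The substantive side is $T(\pi)\,\alpha_Y X$, which by (inv. algd. source) equals $\anc\, p\, T(Y)\anc X$; naturality of $p$ together with $p\anc = \pi$ and $\pi X = \mathrm{id}$ then collapses this to $\anc Y$, exactly as in the displayed calculation. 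The remaining side is immediate from functoriality, since $T(\pi)\,T(X)\anc Y = T(\pi X)\anc Y = \anc Y$ because $X$ is a section. With both components matching, the pair $(\alpha_Y X,\, T(X)\anc Y)$ lies in the domain of $\sd$, and hence the bracket is defined.

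I do not expect a genuine obstacle here: the entire argument is type-checking against the domain condition for the strong difference. The only points that require care are remembering that $\sd$ demands agreement under the \emph{full} projection $(p, T(\pi))$ rather than merely under $T(\pi)$, and invoking the two structural identities (inv. algd. 0) and (inv. algd. source) instead of trying to compute $\alpha$ in closed form.
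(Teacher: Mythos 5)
Your proposal is correct and takes essentially the same approach as the paper: the substantive step you identify, namely $T(\pi)\alpha_Y X = \anc\, p\, T(Y)\anc X = \anc Y$ via (inv.\ algd.\ source) and $p\anc = \pi$, is exactly the paper's displayed calculation. The only difference is that you additionally spell out the routine agreements under $p$ and the functoriality computation $T(\pi)T(X)\anc Y = \anc Y$, which the paper leaves implicit; this makes your version slightly more complete but not a different argument.
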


We now use \ref{lemma:alpha-comp} to relate the Lie bracket $[-,-]_A$ on the total space $A$ to the bracket $[-,-]_{\alpha}$ we've defined using $\alpha$.

\begin{proposition}\label{prop:alpha-sub-preserves-bracket}
  The map $\alpha_{(-)}$ preserves the bracket:
  \[
    \alpha_{[X,Y]_\alpha} = [\alpha_X, \alpha_Y]_A
  \]
\begin{proof}
  We postcompose with $l$ and expand:
  \begin{align*}
    & l [\alpha_X, \alpha_Y]_A \\
    =& l (cT(\alpha_Y)\alpha_X \sd T(\alpha_X)\alpha_Y ) \\
    =& (T(\alpha)(\alpha_Y, cT(\alpha_Y)T(X)\anc -_p T(\alpha)(\alpha_Y, T^2(X)T(\anc)\alpha_Y))-_{Tp} 0\alpha_X\tag{lemma \ref{lemma:alpha-comp}}\\
    =&T(\alpha)(\alpha_Y - \alpha_Y,cT(\alpha_Y)T(X)\anc  -_p T^2(X)T(\anc)\alpha_Y)-_{Tp} 0\alpha_X \tag{$\alpha$ linear}\\
    =& T(\alpha)(\xi\pi,cT(\alpha_Y)T(X)\anc-_p T^2(X)cT(\anc)T(Y)\anc) )-_{Tp} 0\alpha_X\tag{inv. alg. targ.}\\
    =& T(\alpha)(\xi\pi, c(T(\alpha_Y)T(X)\anc -_{Tp} T^2(X)T(\anc)T(Y)\anc    ) ))-_{Tp} T(\alpha)(0, 0T(X)\anc ) \tag{$c$ natural, additive}\\
    =& T(\alpha)(\xi\pi,
       c(T(\alpha_Y)T(X)\anc -_{Tp} T^2(X)T(\anc)T(Y)\anc) -_{T^2\pi} 0T(X)\anc )
    \tag{$\alpha$ linear}\\
    =& T(\alpha)(\xi\pi,
       c((T(\alpha_Y)T(X) -_{Tp} T^2(X)T(\anc)T(Y)) -_{T^2\pi} T(0)T(X)) \anc)
    \tag{$c$ additive}\\
    =& T(\alpha)(\xi\pi, cT(\lambda)T[X,Y]_\alpha\anc )& \tag{by \eqref{eq:master-bracket}} \\
    =& l \alpha(id, T[X,Y]_\alpha\anc) = l \alpha_{[X,Y]_\alpha} & \tag{linearity}
  \end{align*}
  and so $\alpha_{[X,Y]_\alpha} = [\alpha_X, \alpha_Y]_A$ because $l$ is a monomorphism.
\end{proof}
\end{proposition}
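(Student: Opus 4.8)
The plan is to prove the equality of the two vector fields $\alpha_{[X,Y]_\alpha}$ and $[\alpha_X,\alpha_Y]_A$ on $A$ by postcomposing with the vertical lift $l:T(A)\to T^2(A)$ and checking the identity there, since $l$ is a monomorphism. The left-hand side is cheap to compute: by definition $\alpha_{[X,Y]_\alpha}=\alpha(id,T([X,Y]_\alpha)\anc)$, and since $(\alpha,id):(\prolong,\pi_0)\Rightarrow(T(A),p)$ is a linear bundle morphism (part of \ref{def:inv-algd}), whose domain lift is $0\times cT(\lambda)$ and whose codomain lift is $l$, we get $l\alpha_{[X,Y]_\alpha}=T(\alpha)(\xi\pi,\,cT(\lambda)T([X,Y]_\alpha)\anc)$. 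So it suffices to massage $l[\alpha_X,\alpha_Y]_A$ into exactly this shape.

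First I would expand the target Lie bracket on $A$ using the formula recorded in \ref{sub:affine_structures}, namely $[\alpha_X,\alpha_Y]_A=cT(\alpha_Y)\alpha_X\sd T(\alpha_X)\alpha_Y$, and then apply \ref{lemma:alpha-comp} to rewrite the first term as $cT(\alpha_Y)\alpha_X=T(\alpha)(\alpha_Y,\,cT(\alpha_Y)T(X)\anc)$. The point of this rewrite is that the second term $T(\alpha_X)\alpha_Y=T(\alpha)(\alpha_Y,\,T^2(X)T(\anc)\alpha_Y)$ also presents as $T(\alpha)$ applied to a pair whose first slot is $\alpha_Y$. With both summands of the strong difference sharing the first slot $\alpha_Y$, the linearity of $\alpha$ lets me push $\sd$ through $T(\alpha)$: the first components cancel, collapsing to the zero section, while the two second components combine into a single strong difference inside $T(\alpha)$.

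From there I would simplify the remaining second component using (inv.\ algd.\ target), the naturality and additivity of the canonical flip $c$, and once more the linearity of $\alpha$, until it is visibly $cT(\lambda)$ applied to the expression defining $[X,Y]_\alpha$ via \eqref{eq:master-bracket}. At that point the whole expression reads $T(\alpha)(\xi\pi,\,cT(\lambda)T([X,Y]_\alpha)\anc)$, which by the linearity computation in the first paragraph is precisely $l\alpha_{[X,Y]_\alpha}$; cancelling the monic $l$ then yields the claim.

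The main obstacle is the bookkeeping in the combining step: $T(A)$ and $T^2(A)$ each carry two compatible additive structures ($+_p$ and $+_{T\pi}$, together with their tangent images), and the strong difference $\sd$ is assembled from these through the affine structure of \ref{sub:affine_structures}. The delicate point is to verify that the two linearity properties of $\alpha$ (over $\pi_0$ into $(T(A),p)$ and over $p\pi_1$ into $(T(A),T(\pi))$, the latter resting on \ref{lem:T-of-diff-bundle}) are exactly what is needed to commute $T(\alpha)$ past the strong difference with the first slot collapsing to zero, all while keeping careful track of which subtraction ($-_p$, $-_{T\pi}$, or $-_{T^2\pi}$) occurs at each stage. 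Everything else reduces to applications of the involution algebroid axioms and naturality, so this interchange bookkeeping is where essentially all the content lies.
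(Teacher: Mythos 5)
Your proposal is correct and follows essentially the same route as the paper's proof: postcompose with the monic $l$, expand $[\alpha_X,\alpha_Y]_A$ as the strong difference $cT(\alpha_Y)\alpha_X \sd T(\alpha_X)\alpha_Y$, rewrite $cT(\alpha_Y)\alpha_X$ via \ref{lemma:alpha-comp} so both terms share the first slot $\alpha_Y$, collapse that slot by linearity of $\alpha$, and simplify with (inv.\ algd.\ target) and the naturality/additivity of $c$ until \eqref{eq:master-bracket} identifies the result as $l\alpha_{[X,Y]_\alpha}$. The only difference is presentational: you compute the target form $l\alpha_{[X,Y]_\alpha}$ up front and massage the other side toward it, whereas the paper runs the same chain of equalities in a single direction.
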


\begin{theorem}
	The bracket defined in \eqref{eq:lie-bracket-of-algd} is a Lie bracket.
\end{theorem}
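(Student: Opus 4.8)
The plan is to prove the three defining properties of a Lie bracket --- biadditivity, antisymmetry and the Jacobi identity --- not by grinding on $\Gamma(\pi)$ directly, but by transporting them from the commutator bracket $[-,-]_A$ of vector fields on the total space $A$. The two ingredients that make this transport work are already established: Lemma \ref{lem:transfer-monic}, which says that $\alpha_{(-)}:\Gamma(\pi)\to\chi(A)$ is additive and monic (sending $X+_\pi Y$ to $\alpha_X+_p\alpha_Y$ and the zero section $\xi$ to the zero vector field), and Proposition \ref{prop:alpha-sub-preserves-bracket}, which says $\alpha_{(-)}$ intertwines the two brackets, $\alpha_{[X,Y]_\alpha}=[\alpha_X,\alpha_Y]_A$.

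The one genuinely substantive input is the fact that the commutator bracket $[-,-]_A=cT(Y)X\sd T(X)Y$ of \ref{sub:affine_structures} is itself a Lie bracket on $\chi(A)$: biadditive (over $+_p$), antisymmetric, and satisfying the Jacobi identity. This is the canonical Lie bracket of vector fields in a tangent category and is established in \cite{MR3192082}; it is the only nontrivial fact used, and everything else below is a formal consequence of it together with Lemmas and Propositions already proved. Given this, each axiom for $[-,-]_\alpha$ follows by the same three-move pattern: apply $\alpha_{(-)}$, rewrite each bracket via Proposition \ref{prop:alpha-sub-preserves-bracket}, invoke the corresponding property of $[-,-]_A$, and then cancel $\alpha_{(-)}$ using that it is monic. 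For biadditivity, $\alpha_{[X+_\pi X',\,Y]_\alpha}=[\alpha_X+_p\alpha_{X'},\alpha_Y]_A=[\alpha_X,\alpha_Y]_A+_p[\alpha_{X'},\alpha_Y]_A=\alpha_{[X,Y]_\alpha+_\pi[X',Y]_\alpha}$, where the outer equalities use additivity of $\alpha_{(-)}$ and the middle uses biadditivity of $[-,-]_A$; injectivity of $\alpha_{(-)}$ gives the identity, and the second argument is symmetric. For antisymmetry, the same manipulation gives $\alpha_{[X,Y]_\alpha+_\pi[Y,X]_\alpha}=[\alpha_X,\alpha_Y]_A+_p[\alpha_Y,\alpha_X]_A=0=\alpha_\xi$, so $[X,Y]_\alpha+_\pi[Y,X]_\alpha=\xi$ by injectivity. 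For the Jacobi identity, apply $\alpha_{(-)}$ to the cyclic sum and use Proposition \ref{prop:alpha-sub-preserves-bracket} twice on each summand to reach $[[\alpha_X,\alpha_Y]_A,\alpha_Z]_A$ together with its two cyclic permutations, whose $+_p$-sum vanishes by the Jacobi identity for $[-,-]_A$; additivity and injectivity of $\alpha_{(-)}$ then force the cyclic sum in $\Gamma(\pi)$ to equal $\xi$.

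I expect the main obstacle to be essentially all upstream of this statement: everything here is a clean diagram-chase through the monic additive homomorphism $\alpha_{(-)}$, so the real work lives in Proposition \ref{prop:alpha-sub-preserves-bracket} (which in turn rests on the flip axiom via \ref{lemma:alpha-comp}) and in the cited fact that $[-,-]_A$ is a Lie bracket. The only point requiring a little care is bookkeeping the additive structure: the bracket is a Lie bracket over the commutative group $(\Gamma(\pi),+_\pi,\xi)$, whose negatives exist precisely because we work in a tangent category \emph{with negatives}, and one should note that $\alpha_{(-)}$ automatically respects these negatives once it is additive and negatives are present on both sides, so that the antisymmetry and Jacobi conclusions may be read as the expected signed identities rather than merely as ``sum equals $\xi$''.
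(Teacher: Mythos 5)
Your proposal is correct and follows essentially the same route as the paper's own proof: both transport biadditivity, antisymmetry and the Jacobi identity from the vector-field bracket $[-,-]_A$ through the additive monomorphism $\alpha_{(-)}$ of Lemma \ref{lem:transfer-monic}, rewriting via Proposition \ref{prop:alpha-sub-preserves-bracket}, invoking the standard Lie-bracket properties of vector fields from 3.4 of \cite{MR3192082}, and cancelling $\alpha_{(-)}$ by monicity. Your closing remark about negatives is a fair gloss on the paper's use of $-[Y,X]$ in the alternating step, and does not change the argument.
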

\begin{proof}
	By using \ref{lem:transfer-monic} we can leverage the standard results concerning the bracket of vector fields found in 3.4 of \cite{MR3192082}.
	First we show that the bracket is bilinear:
  \begin{align*}
    \alpha_{[X + Y, Z]}
    &= [\alpha_{X + Y}, \alpha_Z]_A \\
    &= [\alpha_X + \alpha_Y, \alpha_Z]_A \\
    &= [\alpha_X, \alpha_Z]_A + [\alpha_Y, \alpha_Z]_A \\
    &= \alpha_{[X,Z]} + \alpha_{[Y,Z]} \\
    &= \alpha_{[X,Z] + [Y,Z]}
  \end{align*}
  second that it is alternating:
  \begin{align*}
    \alpha_{[X,Y]} &= [\alpha_X, \alpha_Y] \\
    &= -[\alpha_Y, \alpha_X] \\
    &= -\alpha_{[Y,X]} = \alpha_{-[Y,X]}
  \end{align*}
  and third that the Jacobi identity holds:
  \begin{align*}
    &\alpha_{ [X,[Y,Z] + [Z,[X,Y]] + [Y, [Z,X]] }\\
    =& \alpha_{[X,[Y,Z]]} + \alpha_{[Z,[X,Y]} + \alpha_{[Y, [Z,X]} \\
    =& [\alpha_X, \alpha_{[Y,Z]}]_A + [\alpha_Z, \alpha_{[X,Y]}]_A + [\alpha_Y, \alpha_{[Z,X]}]_A \\
    =& [\alpha_X, [\alpha_Y, \alpha_Z]_A]_A + [\alpha_Z, [\alpha_X, \alpha_Y]_A]_A
       + [\alpha_Y, [\alpha_Z, \alpha_X]_A]_A \\
    =& 0_A = \alpha_\xi
  \end{align*}
  Therefore $[-,-]$ is a Lie bracket.
\end{proof}

Recall that for a Lie algebroid the identity $\anc[X, Y] = [\anc X, \anc Y]$ follows from the Leibniz law.
We end this section by demonstrating that this equality holds even when we don't assume the presence of a unit object (which is required to formulate the Leibniz law).

\begin{proposition}
  Let $X,Y \in \Gamma(\pi)$, then $\anc[X,Y]_\alpha = [\anc X, \anc Y]_A$.
\end{proposition}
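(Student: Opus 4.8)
The plan is to realise both sides as brackets of $\pi$-related vector fields and then cancel the epimorphism $\pi$. The key observation is that the bundle projection $\pi$ relates the vector field $\alpha_X$ on $A$ to the vector field $\anc X$ on $M$; that is,
\[
T(\pi)\alpha_X = \anc X\,\pi.
\]
To see this, recall $\alpha_X = \alpha(id, T(X)\anc)$ and apply (inv. algd. source) to obtain $T(\pi)\alpha_X = \anc\, p\, T(X)\anc$. Naturality of $p$ gives $p\,T(X) = X\,p$, and since $\anc$ is a bundle morphism we have $p\,\anc = \pi$; combining these yields $\anc\, p\, T(X)\anc = \anc X\, p\, \anc = \anc X\,\pi$, as claimed. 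The same computation applied to $Y$, and to the section $[X,Y]_\alpha$, shows that $\pi$ likewise relates $\alpha_Y$ to $\anc Y$ and $\alpha_{[X,Y]_\alpha}$ to $\anc[X,Y]_\alpha$.

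Next I would invoke the functoriality of the Lie bracket of vector fields with respect to related vector fields (a standard consequence of the constructions in 3.4 of \cite{MR3192082}, already used above to verify that $[-,-]_\alpha$ is a Lie bracket): since $\pi$ relates $\alpha_X$ to $\anc X$ and $\alpha_Y$ to $\anc Y$, it relates their brackets, so
\[
T(\pi)[\alpha_X, \alpha_Y]_A = [\anc X, \anc Y]\,\pi.
\]
By \ref{prop:alpha-sub-preserves-bracket} we have $[\alpha_X, \alpha_Y]_A = \alpha_{[X,Y]_\alpha}$, and by the relation established above applied to the section $[X,Y]_\alpha$ we have $T(\pi)\alpha_{[X,Y]_\alpha} = \anc[X,Y]_\alpha\,\pi$. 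Equating the two resulting expressions for $T(\pi)\alpha_{[X,Y]_\alpha}$ gives $\anc[X,Y]_\alpha\,\pi = [\anc X, \anc Y]\,\pi$, and since $\pi$ is an epimorphism (as noted in the proof of \ref{lem:transfer-monic}) we may cancel it to conclude $\anc[X,Y]_\alpha = [\anc X, \anc Y]$.

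The genuinely load-bearing steps are the relation $T(\pi)\alpha_X = \anc X\,\pi$ together with bracket functoriality; everything else is formal. The point I would be most careful about is the precise form of functoriality available in the tangent-categorical setting, namely that $f$-related vector fields have $f$-related brackets even when $f$ is not a retraction, and checking that the version in \cite{MR3192082} applies to the non-invertible map $\pi$. If that statement were not directly citable I would instead route the argument through the complete lift $(\anc X)^c = cT(\anc X)$, using (inv. algd. target) to establish that $\anc$ relates $\alpha_X$ to $(\anc X)^c$ and then the homomorphism property of the complete lift, cancelling at the end by precomposing with $\xi$ and using $\anc\xi = 0$; but the $\pi$-relatedness route above is cleaner and avoids the complete-lift machinery entirely.
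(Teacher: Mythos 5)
Your proof is correct, but it follows a genuinely different route from the paper's. The paper argues by one self-contained computation: postcompose with the monic lift $l$, use linearity of the anchor to write $l\anc[X,Y]_\alpha = T(\anc)\lambda[X,Y]_\alpha$, expand $\lambda[X,Y]_\alpha$ by the defining formula \eqref{eq:master-bracket}, distribute $T(\anc)$ over the two differences, rewrite $T(\anc)\alpha_YX$ by (inv. algd. target) and $T(\anc)0X$ by naturality of $0$, and recognise the result as $l[\anc X, \anc Y]$; notably it never invokes \ref{prop:alpha-sub-preserves-bracket}. Your argument is instead structural: the $\pi$-relatedness $T(\pi)\alpha_X = \anc X\,\pi$ (correctly derived from (inv. algd. source), naturality of $p$, and $p\anc = \pi$), the bracket-preservation result \ref{prop:alpha-sub-preserves-bracket}, functoriality of the vector-field bracket under $\pi$-related fields, and cancellation of the epimorphism $\pi$ (which is even split, by $\xi$). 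What your route buys is conceptual transparency: the proposition becomes a formal consequence of $\alpha_{(-)}$ being a bracket-preserving map lying over the anchor. What it costs is precisely the dependency you flagged: the relatedness lemma for a non-invertible map is not established anywhere in the paper, and you should not assume it is quotable from 3.4 of \cite{MR3192082}, where the stated results are the Lie algebra axioms for the bracket rather than its naturality. The lemma is nevertheless true in any tangent category with negatives and has a short proof in exactly the style you anticipate: if $T(f)x = x'f$ and $T(f)y = y'f$, write $l[x,y]$ as the standard combination of $T(y)x$, $cT(x)y$ and a zero term; naturality of $l$, $c$, $0$ and of the additive structures shows that $l\,T(f)[x,y]$ and $l\,[x',y']\,f$ both equal $T^2(f)$ applied to that combination, and monicity of $l$ gives $T(f)[x,y]=[x',y']f$. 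With that half-paragraph supplied (or with your fallback through complete lifts, which, resting as it does on (inv. algd. target), is essentially the paper's own computation repackaged), your argument is complete.
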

\begin{proof}
  We check
  \begin{align*}
    l \anc [X,Y]_\alpha =& T(\anc)\lambda[X,Y]_{\alpha} \\
    =& T(\anc)((T(X)\anc Y -_{p} \alpha_YX) -_{T\pi} 0X) \\
    =& (T(\anc)T(X)\anc Y -_{p} T(\anc)\alpha_YX) -_{Tp} T(\anc)0X \\
    =& (T(\anc)T(X)\anc Y -_{p} cT(\anc)T(Y)\anc X) -_{Tp} 0\anc X
     & \tag{inv. alg. target} \\
    =& l [\anc X, \anc Y]
  \end{align*}
  as required.
\end{proof}

\subsubsection{The Leibniz law}
\label{subsubsec:Leibniz}

In this section we prove that the Lie bracket on an involution
algebroid constructed in \ref{subsubsec:the_lie_bracket_of_an_involution_algebroid} satisfies the Leibniz law when we additionally assume presence of a unit object.
Recall that the Leibniz law is:
\begin{equation*}
[X, f\bullet_{\pi} Y] = f\bullet_{\pi} [X, Y] + \mathcal L_{\anc X}(f)\bullet_{\pi} Y
\end{equation*}
where $\mathcal L_{\anc X}(f) = \hat{p}T(f)\anc X$ and $\hat{p}$ is from \ref{def:differential-object}.
We break the proof of the Leibniz law into two stages.
First we prove a lemma that describes how $\alpha_{(-)}$ interacts with the multiplication $\bullet$.

\begin{lemma}\label{lem:leibniz-lemma}
	\begin{equation*}
	\alpha(X, T(\psi\bullet_{\pi} Y)\anc X) -_{T(\pi)} \psi\bullet_p\alpha(X,T(Y)\anc X) = \lambda\mathcal L _{\anc X}(\psi)\bullet_{\pi} Y
	\end{equation*}
	\begin{proof}
		We aim to use the equation
		\begin{equation*}
			r\bullet^T_{\pi}w -_p pr\bullet_{T(\pi)} w = \hat{p}r\bullet_{T(\pi)}\lambda p w -_p T(\xi) 0m
		\end{equation*}
		which is \eqref{eq:derivative-of-scalar-multiplication} in \ref{sec:representable-units-function-algebras} and compute:
		\begin{align*}
			\alpha&(X, T(\psi\bullet_{\pi} Y)\anc X) -_{T(\pi)} \psi\bullet_p\alpha(X,T(Y)\anc X)\\
			&= \alpha(X, T(\psi\bullet_{\pi} Y)\anc X) -_{T(\pi)} \alpha(X,\psi\bullet_{T(\pi)}T(Y)\anc X)\tag{$\alpha$ linear}\\
			&= \alpha(\xi, (T(\psi\bullet_{\pi} Y)-_p \psi p\bullet_{T(\pi)}T(Y))\anc X)\tag{$\alpha$ linear}\\
			&= \alpha(\xi, (\hat{p}T(\psi)\bullet_{T(\pi)}\lambda pT(Y)-_p T(\xi))\anc X)\tag{by \eqref{eq:derivative-of-scalar-multiplication}}\\
			&= \alpha(\xi, \hat{p}T(\psi)\bullet_{T(\pi)}\lambda p T(Y)\anc X)-_{T(\pi)}\alpha(\xi, T(\xi)\anc X) \tag{$\alpha$ linear}\\
			&= \alpha(\xi, \hat{p}T(\psi)\bullet_{T(\pi)}\lambda p T(Y)\anc X)\tag{$\alpha$ preserves zero}\\
			&= \hat{p}T(\psi)\bullet_{T(\pi)}\lambda pT(Y)\anc X\tag{inv. algd. unit}\\
			&= \lambda\mathcal L _{\anc X}(\psi)\bullet_{\pi} Y
		\end{align*}
		as required.
	\end{proof}
\end{lemma}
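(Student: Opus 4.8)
The plan is to verify the stated identity by transporting the scalar-multiplication ``defect'' through the involution $\alpha$, using its two linearity properties together with the torsion-free identity \eqref{eq:derivative-of-scalar-multiplication} recorded in \ref{sec:representable-units-function-algebras}. The target $\lambda\mathcal L_{\anc X}(\psi)\bullet_{\pi} Y$ is a lift, so the strategy is to rewrite the left-hand side until a single $\alpha$ with a \emph{zero} first argument appears, at which point (inv. algd. unit) collapses $\alpha$ on a lift to the identity and exposes the Lie derivative.

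First I would move the external scalar multiplication inside $\alpha$. Since $(\alpha, id): (\prolong, \pi_0) \Rightarrow (T(A), p)$ is a linear morphism and scalar multiplication is definable from the differential-bundle structure (via the universal property of the unit object), it intertwines the fibrewise scalings; on the $\pi_0$-bundle the second factor carries the $T(\pi)$-structure, so $\psi\bullet_p\alpha(X, T(Y)\anc X) = \alpha(X, \psi\bullet_{T(\pi)}T(Y)\anc X)$. The left-hand side then becomes a $T(\pi)$-fibre difference of two values of $\alpha$ sharing the first argument $X$.

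Next I would collapse this difference into a single $\alpha$ using the second linearity $(\alpha, \anc): (\prolong, p\pi_1) \Rightarrow (T(A), T(\pi))$. Because a linear morphism preserves subtraction, the $T(\pi)$-difference pulls back to the $p\pi_1$-difference on $\prolong$: the two first arguments are both $X$ and cancel to $\xi\pi X$, while the second arguments combine in the $p$-fibre, giving $\alpha(\xi, (T(\psi\bullet_{\pi} Y) -_p \psi p\bullet_{T(\pi)}T(Y))\anc X)$. The inner expression is exactly the defect measured by \eqref{eq:derivative-of-scalar-multiplication}, so substituting that identity rewrites the second argument as $\hat{p}T(\psi)\bullet_{T(\pi)}\lambda pT(Y)\anc X -_p T(\xi)0m$. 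I would then split this $p$-difference back across $\alpha$ (again by linearity), note that $\alpha(\xi, T(\xi)\anc X)$ vanishes because $\alpha$ preserves the zero section, and apply (inv. algd. unit), namely $\alpha(\xi\pi, \lambda) = \lambda$, to the surviving lift. Since $\hat{p}T(\psi)\anc X = \mathcal L_{\anc X}(\psi)$ by definition and $pT(Y)\anc X = Y$, the result is $\lambda\mathcal L_{\anc X}(\psi)\bullet_{\pi} Y$.

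The main obstacle is the bookkeeping of the two distinct differential-bundle structures on $\prolong$ (the $\pi_0$-bundle and the $p\pi_1$-bundle) and checking that every fibrewise operation is well typed: in particular that both second arguments lie over the same point of $A$ so the $p$-difference is defined, that the cancelled first arguments still yield an element of $\prolong$ after the $\pi$-fibre subtraction, and that the base point $m = \pi X$ occurring in \eqref{eq:derivative-of-scalar-multiplication} is matched correctly. Once these typing constraints are confirmed, each rewriting step is a single application of one linearity property, the torsion-free identity, or the unit axiom, and no further computation is required.
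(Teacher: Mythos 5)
Your proposal is correct and follows essentially the same route as the paper's proof: moving the scalar $\psi$ inside $\alpha$ via the $(\alpha,id)$ linearity, collapsing the $T(\pi)$-difference into a single $\alpha(\xi,-)$ via the $(\alpha,\anc)$ linearity, substituting \eqref{eq:derivative-of-scalar-multiplication}, discarding $\alpha(\xi, T(\xi)\anc X)$ by zero preservation, and finishing with (inv. algd. unit). Your explicit identification of which of the two bundle structures on $\prolong$ justifies each step is a point the paper leaves implicit, but the argument is the same.
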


Using \ref{lem:leibniz-lemma} we can now give a direct proof of the Leibniz law.
\begin{proposition}
  The bracket of \ref{eq:master-bracket} satisfies the Leibniz law.
\begin{proof}
	We aim to use \ref{lem:leibniz-lemma} and compute:
	\begin{align*}
		\lambda&([X, \psi\bullet_{\pi} Y]-\psi\bullet_{\pi}[X, Y])\\
		&=\lambda[X, \psi\bullet_{\pi} Y] -_{T(\pi)} \psi\bullet_{\pi}[X, Y]\\
		&= \left(\psi\bullet_p T(X)\anc Y -_p \psi\bullet_p \alpha(X, T(Y)\anc X\right) \\
		&-_{T(\pi)} \left(\psi\bullet_{p}T(X)\anc Y -_p \alpha(X, T(\psi\bullet_{\pi}Y)\anc X)\right)\tag{by \eqref{eq:master-bracket}}\\
		&= \alpha(X, T(\psi\bullet_{\pi} Y)\anc X)-_{T(\pi)} \psi\bullet_p\alpha(X, T(Y)\anc X) \tag{interchange law}\\
		&= \lambda\mathcal L _{\anc X}(\psi)\bullet_{\pi} Y
	\end{align*}
	and so $[X, \psi\bullet_{\pi} Y] = \psi\bullet_{\pi}[X, Y] + \lambda\mathcal L _{\anc X}(\psi)\bullet_{\pi} Y$ because $\lambda$ is monic.
\end{proof}
\end{proposition}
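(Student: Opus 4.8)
The plan is to exploit that the lift $\lambda$ is a monomorphism, so that it suffices to verify the Leibniz identity after applying $\lambda$ to both sides. Writing $\psi$ for the function $f$, I would begin by expanding
\[
\lambda\bigl([X,\psi\bullet_\pi Y]-_{T(\pi)}\psi\bullet_\pi[X,Y]\bigr)
\]
by substituting the defining formula \eqref{eq:master-bracket} for each of the two brackets. Each bracket contributes a term of the shape $T(X)\anc Y$, scaled by $\psi$, together with an $\alpha$-term; the essential point is that the two copies of $\psi\bullet_p T(X)\anc Y$ are identical and ought to cancel, leaving only the $\alpha$-terms behind.

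The genuine structural step is this cancellation. Since $T(A)$ carries the two compatible additive structures $+_p$ and $+_{T(\pi)}$, the difference of the two bracket expressions cannot be simplified naively: one must invoke the interchange law (part (1) of the affine-bundle proposition of \ref{sub:affine_structures}) to reorganise the nested $+_p$- and $+_{T(\pi)}$-sums so that the two $\psi\bullet_p T(X)\anc Y$ terms land in matching positions and annihilate. After this reorganisation the surviving expression is precisely
\[
\alpha(X,T(\psi\bullet_\pi Y)\anc X)-_{T(\pi)}\psi\bullet_p\alpha(X,T(Y)\anc X),
\]
which is exactly the left-hand side of Lemma \ref{lem:leibniz-lemma}. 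I would then apply that lemma directly to identify this with $\lambda\,\mathcal L_{\anc X}(\psi)\bullet_\pi Y$, and conclude by monicity of $\lambda$.

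The hard part has been deliberately quarantined into Lemma \ref{lem:leibniz-lemma} rather than living in this proposition: that lemma is where the unit-object hypotheses and the torsion-freeness of scalar multiplication actually enter, through the derivative-of-scalar-multiplication formula \eqref{eq:derivative-of-scalar-multiplication}. Within the proposition itself the only real subtlety is the bookkeeping of the two additive structures and checking that the interchange law applies in exactly the configuration needed to clear the common $\psi\bullet_p T(X)\anc Y$ terms; once those are eliminated the result follows immediately from the lemma. I therefore expect the interchange-law rearrangement to be the step most prone to sign and bundle-structure errors, and would track the $+_p$ versus $+_{T(\pi)}$ typing with particular care there.
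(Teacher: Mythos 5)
Your proposal is correct and follows essentially the same route as the paper's own proof: apply the monic $\lambda$ to the difference of brackets, expand both via \eqref{eq:master-bracket}, cancel the common $\psi\bullet_p T(X)\anc X$-type terms using the interchange law between $+_p$ and $+_{T(\pi)}$, and then invoke Lemma \ref{lem:leibniz-lemma} to identify the remainder with $\lambda\,\mathcal{L}_{\anc X}(\psi)\bullet_\pi Y$. You also correctly locate where the unit-object and torsion-freeness hypotheses actually do their work, namely inside the lemma via \eqref{eq:derivative-of-scalar-multiplication}, exactly as in the paper.
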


\subsubsection{Injection on objects} % (fold)
\label{ssubub:injection_on_objects}

In this section we confirm that the work in sections \ref{subsub:the_involution_algebroid_of_a_lie_algebroid} and \ref{subsubsec:the_lie_bracket_of_an_involution_algebroid} imply an injection on objects from the category $LieAlgd$ of Lie algebroids to the category $SInvAlgd$ of involution algebroids in the category of smooth manifolds.
So let $\pi:A \rightarrow M$ be a Lie algebroid and $X$ and $Y$ sections of $\pi$.
Applying the function defined in \ref{subsub:the_involution_algebroid_of_a_lie_algebroid} to $A$ we obtain an involution algebroid $\iota(A)$.
Applying the function defined in \ref{subsubsec:the_lie_bracket_of_an_involution_algebroid} we obtain another Lie algebroid $r(\iota(A))$.
By construction the underlying anchored bundle of $r(\iota(A))$ is the same as the underlying anchored bundle of $A$.

Now we compare the bracket $[-,-]_2$ on $r(\iota(A))$ with the original bracket $[-,-]$ on $A$.
First we see that 
\begin{equation}\label{eq:bracket-on-inv-appendix}
	\lambda [X, Y]_2 = \left(T(X)\anc Y -_p \alpha(X, T(Y)\anc X\right))+_{T(\pi)}0X
\end{equation}
where $\alpha$ is in the first instance defined in terms of $[-,-]$ by \eqref{eq:master-eqn}.
However since we are applying $\alpha$ to vector fields (rather than elements of the prolongation) we see that we are in the situation described in \eqref{eq:master-bracket} where $\alpha_R$ vanishes.
But now it is easy to see that when we substitute \eqref{eq:master-bracket} into \ref{eq:bracket-on-inv-appendix} we obtain $[X, Y]_2 = [X, Y]$.

% subsection injection_on_objects (end)

\subsection{Lie algebras and involution algebras} % (fold)
\label{sub:the_case_of_lie_algebras}

In this section we describe how our previous results specialise to the case when our base space $M$ is the terminal object in the category $Man$ of smooth manifolds.
In classical Lie theory this corresponds to restricting attention from the category of Lie algebroids to the category $LieAlg$ of Lie algebras.
Correspondingly we define a smooth involution algebra as an involution algebroid in $Man$ with trivial base space and denote by $SInvAlg$ the category of smooth involution algebras.
The main result is \ref{cor:equiv-with-lie-algebras} which exhibits an equivalence of categories between $SInvAlg$ and $LieAlg$.
In \ref{subsub:inv-alg-of-lie-alg} we describe how every Lie algebra has the structure of a involution algebra.
This provides a function from the objects of $LieAlg$ to the objects of $SInvAlg$.
In \ref{subsub:equivalence_of_categories} we show how this function can be extended to a functor $LieAlg \rightarrow SInvAlg$ and then prove that this functor is full, faithful and essentially surjective.

\subsubsection{The involution algebra of a Lie algebra}
\label{subsub:inv-alg-of-lie-alg}

Recall that a Lie algebra is a vector space $A$ equipped with a bilinear and anti-symmetric bracket $[-,-]:A\times A \rightarrow A$ such that
\begin{align*}
	[[x, y], z] &= [[x, z], y] + [x, [y, z]]\tag{Jacobi}
\end{align*}
where $x, y, z\in A$.
Therefore a Lie algebra is a Lie algebroid with the trivial base space $\mathbb{R}^0$.
In this section we show that Lie algebras are examples of involution algebroids with trivial base space.
Strictly speaking these results follow from their counterparts in \ref{subsub:the_involution_algebroid_of_a_lie_algebroid} but we retain them here because in the case of Lie algebras it is possible to give an explicit algebraic proof without relying on the Levi-Civita connection.
Recall that a \emph{differential object} (\ref{def:differential-object}) is a differential bundle over the terminal object.

\begin{definition}[Involution algebra]
	An \emph{involution algebra} is an involution algebroid over the terminal object. (I.e. for which $M\cong 1$.)
	%  differential object $A$ equipped with a map $\alpha:A\times T(A) \rightarrow T(A)$ such that $(\alpha, id): (\prolong, \pi_0) \Rightarrow (T(A), p)$ and $(\alpha, \anc): (\prolong, p\pi_1) \Rightarrow (T(A), T(\pi))$ are linear bundle morphisms and:-
	% \begin{align*}
	% 	\alpha(\xi\pi, \lambda) &= \lambda\tag{inv. algd. unit} \\
	% 	\alpha(p\pi_1, \alpha) &= \pi_1 \tag{inv. alg. inv.}\\
	% 	T(\alpha)(\alpha(v, w), x) &= cT(\alpha)(\alpha(v, px), cT(\alpha)(w, cx)) \tag{inv. alg. flip}
	% \end{align*}
	% where $v\in A$, $w\in T(A)$ and $x\in T^2(A)$ such that $\anc v = T(\pi)w$ and $T(\anc)w = cT^2(\pi)x$.
\end{definition}

\begin{remark}
	Under the isomorphism $T(A)\cong A\times A$ of \ref{def:differential-object} an involution algebra is equivalently a differential object $A$ equipped with a map $\alpha:A\times A\times A \rightarrow A\times A$ such that $(\alpha, id):(A\times A\times A, \pi_0) \Rightarrow (A\times A, \pi_0)$ and $(\alpha, !): (A\times A\times A, \pi_1) \Rightarrow (A\times A, !)$ are linear bundle morphisms and:-
	\begin{align*}
		\alpha(0, 0, v) = (0, v) \tag{inv. alg. unit}\\
		\alpha(w_H, \alpha(v, w_H, w_V)) = (w_H, w_V) \tag{inv. alg. inv.}\\
		T(\alpha)(\alpha(v, w_H, w_V), x_H, x_V, x'_H, x'_V)\\
		=cT(\alpha)(\alpha(v, x_H, x_V), cT(\alpha)(w_H, w_V, x_H, x'_H, x_V, x'_V))\tag{inv. alg. flip}
	\end{align*}
	where $0, v, w_H, w_V, x_H, x_V, x'_H, x'_V\in A$.
\end{remark}

Now we show that every classical Lie algebra is an involution algebra in the category of smooth manifolds.
We use \eqref{eq:master+diff-obj}:
\begin{equation*}
\alpha(v, w_H, w_V) = (v, w_V + [v, w_H])
\end{equation*}
to check the involution algebra axioms.
Note that by differentiating the definition of $\alpha$ we obtain
\begin{equation*}
T(\alpha)(a, b, c, d, e, f) = (a, d + [a, c], b, f + [a, e] + [b, c])
\end{equation*}
where $a, b, c, d, e, f\in A$.

\begin{lemma}[Bundle morphism I]
	The $\alpha$ of \eqref{eq:master+diff-obj} is a linear bundle morphism $(\prolong, \pi_0) \Rightarrow (T(A), p)$ over $id$.
	\begin{proof}
		The equation $p\alpha = \pi_0$ follows immediately from \eqref{eq:master+diff-obj}.
		We now check linearity:
		\begin{align*}
			T(\alpha)(0\times cT(\lambda))(v, w_H, w_V) &= T(\alpha)(v, 0, 0, 0, w_H, w_V) \\
			&= (v, 0, 0, w_V+[v, w_H])\\
			&= l\alpha(v, w_H, w_V)
		\end{align*}
		as required.
	\end{proof}
\end{lemma}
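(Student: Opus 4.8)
The plan is to reduce the claim to two checks and then settle each by a short coordinate computation in the differential-object coordinates $T(A)\cong A\times A$ of \ref{def:differential-object}, under which $\prolong\cong A\times A\times A$. Recall that by 2.16 of \cite{MR3792842} a morphism of differential bundles that preserves the lift automatically preserves the fibrewise zero and addition. Hence it is enough to show (i) that $\alpha$ is a bundle morphism over $\mathrm{id}$, i.e.\ that the underlying square commutes, and (ii) that $\alpha$ preserves the lifts, which is the definition of linearity.

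For (i) I would observe that the projection of $(\prolong,\pi_0)$ is $\pi_0\colon(v,w_H,w_V)\mapsto v$, while the projection $p$ of $(T(A),p)$ is the first coordinate projection of $T(A)\cong A\times A$. Since the defining formula $\alpha(v,w_H,w_V)=(v,w_V+[v,w_H])$ of \eqref{eq:master+diff-obj} has first component $v$, the equality $p\alpha=\pi_0$ is immediate, and this is exactly the assertion that $\alpha$ lies over $\mathrm{id}$.

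For (ii) I would compare the two composites $T(\alpha)\circ(0\times cT(\lambda))$ and $l\circ\alpha$, where $0\times cT(\lambda)$ is the lift recorded for the $\pi_0$-bundle and $l$ is the vertical lift on the tangent bundle $(T(A),p)$. The strategy is to evaluate both sides on a general element $(v,w_H,w_V)$. Using the differentiated form $T(\alpha)(a,b,c,d,e,f)=(a,d+[a,c],b,f+[a,e]+[b,c])$ recorded just before the lemma, together with the coordinate expression $(0\times cT(\lambda))(v,w_H,w_V)=(v,0,0,0,w_H,w_V)$, the left side evaluates to $(v,0,0,w_V+[v,w_H])$; and since $l$ sends the tangent vector $\alpha(v,w_H,w_V)=(v,w_V+[v,w_H])$ to $(v,0,0,w_V+[v,w_H])$, the two agree. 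Bilinearity of $[-,-]$ is all that is needed to make the bracket terms combine as shown.

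The step I expect to require the most care is pinning down the coordinate form $(0\times cT(\lambda))(v,w_H,w_V)=(v,0,0,0,w_H,w_V)$. This means unwinding the composite lift of the pullback bundle: the zero section $0$ contributes the base datum $v$ with vanishing derivative, while $cT(\lambda)$ — the lift of $(T(A),T(\pi))$ from \ref{lem:T-of-diff-bundle} — must be computed by differentiating $\lambda=(0,\mathrm{id})$ and then applying the canonical flip $c$, which is what moves the pair $(w_H,w_V)$ into the final two slots. Once this single coordinate identity is verified, the remainder is the displayed one-line substitution into $T(\alpha)$ and the matching computation of $l\alpha$.
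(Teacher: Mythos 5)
Your proposal is correct and takes essentially the same route as the paper: both verify $p\alpha=\pi_0$ immediately from \eqref{eq:master+diff-obj} and then check linearity by evaluating $T(\alpha)(0\times cT(\lambda))$ on $(v,w_H,w_V)$, obtaining $(v,0,0,w_V+[v,w_H])=l\alpha(v,w_H,w_V)$ via the differentiated formula for $T(\alpha)$. Your extra unwinding of the coordinate identity $(0\times cT(\lambda))(v,w_H,w_V)=(v,0,0,0,w_H,w_V)$ and the appeal to 2.16 of \cite{MR3792842} simply make explicit steps the paper leaves implicit.
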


\begin{lemma}[Bundle morphism II]
	The $\alpha$ of \eqref{eq:master+diff-obj} is a linear bundle morphism $(\prolong, p\pi_1) \Rightarrow (T(A), !)$ over $!:A \rightarrow 1$.
	\begin{proof}
		The map $(\alpha, !)$ is clearly a bundle morphism.
		Now we check linearity:
		\begin{align*}
			T(\alpha)(\lambda\times l)(v, w_H, w_V)&= T(\alpha)(0, v, w_H, 0, 0, w_V)\\
			&= (0, 0, v, w_V+[v, w_H])\\
			&= cT(\lambda)\alpha(v, w_V+[v, w_H])
		\end{align*}
		as required.
	\end{proof}
\end{lemma}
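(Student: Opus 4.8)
The plan is to check the two defining requirements of a linear differential bundle morphism separately. Since the codomain bundle $(T(A), !)$ lives over the terminal object, the commuting square in the definition of a bundle morphism is automatic, so the first task collapses to checking that $\alpha$ respects the fibrewise additive structure: that it is additive and sends the zero of the $p\pi_1$-bundle to the zero of the $!$-bundle. This is immediate from \eqref{eq:master+diff-obj}: writing $\alpha(v, w_H, w_V) = (v, w_V + [v, w_H])$ and using that the bracket is linear in each slot, one sees at once that $\alpha$ is additive in the pair $(w_H, w_V)$ over which the $p\pi_1$-structure sums, and that $\alpha$ carries the corresponding zero to the zero of $T(A)$.

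The substantive part is linearity, i.e.\ the identity $T(\alpha)\circ(\lambda\times l) = cT(\lambda)\circ\alpha$, where $\lambda\times l$ is the lift of the $p\pi_1$-bundle on the prolongation $\prolong$ and $cT(\lambda)$ is the lift on $T(A)$ supplied by \ref{lem:T-of-diff-bundle}. My approach is to first differentiate \eqref{eq:master+diff-obj} to obtain the coordinate expression for $T(\alpha)$ on $T(\prolong)\cong A^6$, then to write the lift $\lambda\times l$ as a map $A^3\to A^6$ in the same coordinates, substitute, and compare against the coordinate form of $cT(\lambda)\circ\alpha$. Under the differential-object isomorphism $T(A)\cong A\times A$ the lift $\lambda$ is $(0,\mathrm{id})$ and the canonical flip $c$ acts on $T^2(A)\cong A^4$ by swapping the two middle slots, so I expect $(\lambda\times l)(v, w_H, w_V)$ to be the six-tuple $(0, v, w_H, 0, 0, w_V)$, the differentiated $\alpha$ to send this to $(0, 0, v, w_V + [v, w_H])$, and $cT(\lambda)$ applied to $\alpha(v,w_H,w_V)=(v, w_V+[v,w_H])$ to return the same tuple.

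The main obstacle is purely the coordinate bookkeeping: pinning down precisely how $\lambda$, $l$ and $c$ act once everything is expressed through $T(A)\cong A\times A$, and ensuring the slot that $c$ permutes is tracked consistently on both sides so that the bracket terms land in matching positions. There is no bracket identity to invoke here — the only bracket expressions that surface are of the form $[0,-]$, which vanish — so beyond the tangent-structure accounting the verification is a direct substitution into the differentiated formula. The one point I would double-check carefully is that the coordinate form of $T(\alpha)$ really is the derivative of \eqref{eq:master+diff-obj} with the slots ordered exactly as the tangent functor produces them, since a mis-ordering would displace the bracket terms even though they ultimately cancel, and getting this ordering right is what makes the comparison with $cT(\lambda)\circ\alpha$ go through cleanly.
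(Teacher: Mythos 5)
Your proposal is correct and follows essentially the same route as the paper: differentiate \eqref{eq:master+diff-obj} to get $T(\alpha)(a,b,c,d,e,f) = (a,\, d+[a,c],\, b,\, f+[a,e]+[b,c])$, compute $(\lambda\times l)(v,w_H,w_V) = (0,v,w_H,0,0,w_V)$, and substitute to obtain $(0,0,v,w_V+[v,w_H]) = cT(\lambda)\alpha(v,w_H,w_V)$, which is exactly the paper's calculation. One small slip in your prose: not every bracket produced by the differentiation is of the form $[0,-]$ --- the $[b,c]$-term contributes $[v,w_H]$, which does not vanish; it simply coincides with the bracket already present in $\alpha(v,w_H,w_V)$, so your conclusion that no bracket identity (Jacobi, antisymmetry) is needed still stands.
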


\begin{lemma}[Inv. alg. unit]
	The $\alpha$ of \eqref{eq:master+diff-obj} satisfies:
	\begin{equation*}
	\alpha(\xi\pi, \lambda) = \lambda
	\end{equation*}
	\begin{proof}
		We check $\alpha(\xi m, \xi m, v) = (\xi m, [\xi m, \xi m]+v) = (\xi m, v)$.
	\end{proof}
\end{lemma}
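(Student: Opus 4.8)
The plan is to reduce the whole statement to the explicit coordinate formula \eqref{eq:master+diff-obj} together with bilinearity of the bracket, since we are in the differential-object case where $M \cong 1$. First I would translate the abstract pairing $(\xi\pi, \lambda)\colon A \to \prolong$ into the coordinates $(v', w_H, w_V)$ on $\prolong \cong A\times A\times A$. Because the base is terminal, $\xi\pi$ sends every $v$ to the zero element $\xi m$ of $A$, so the $v'$-slot is $\xi m$. Under the identification $T(A)\cong A\times A$ coming from the differential-object connection (for which $\lambda = (0,\mathrm{id})$, as recorded just before \eqref{eq:master+diff-obj}), the lift reads $\lambda(v) = (\xi m, v)$, giving $w_H = \xi m$ and $w_V = v$. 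Hence $(\xi\pi, \lambda)$ evaluated at $v$ is the prolongation element $(\xi m, \xi m, v)$.

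Next I would simply substitute this into \eqref{eq:master+diff-obj}, namely $\alpha(v', w_H, w_V) = (v', w_V + [v', w_H])$, to obtain
\[
\alpha(\xi m, \xi m, v) = (\xi m,\; v + [\xi m, \xi m]).
\]
Since $\xi m$ is the zero of $A$ and the bracket is bilinear we have $[\xi m, \xi m] = 0$, so the right-hand side collapses to $(\xi m, v)$. This is precisely $\lambda(v)$ in the chosen coordinates, which yields $\alpha(\xi\pi, \lambda) = \lambda$ as required.

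The bracket manipulation presents essentially no difficulty; the only step that requires care is the bookkeeping that unwinds the abstract maps $\xi\pi$ and $\lambda$ into the three coordinate slots of $\prolong$ under $T(A)\cong A\times A$. Once that identification is pinned down using $\lambda = (0,\mathrm{id})$ and the constancy of $\xi\pi$, the verification is immediate.
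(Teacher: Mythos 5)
Your proposal is correct and is essentially the paper's own proof: the paper likewise just evaluates \eqref{eq:master+diff-obj} at the prolongation element $(\xi m, \xi m, v)$ and collapses $[\xi m, \xi m]$ to zero, obtaining $(\xi m, v) = \lambda(v)$. The only difference is that you spell out the coordinate bookkeeping identifying $(\xi\pi,\lambda)$ with $(\xi m, \xi m, v)$ under $T(A)\cong A\times A$ and $\lambda = (0,\mathrm{id})$, which the paper leaves implicit.
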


\begin{lemma}[Inv. alg. inv.]
The $\alpha$ of \eqref{eq:master+diff-obj} satisfies:	$\alpha(p\pi_1, \alpha) = \pi_1$
	\begin{proof}
		Using the definition of $\alpha$ twice:
		\begin{align*}
			\alpha(w_H, \alpha(v, w_H, w_V)) &= \alpha(w_H, v, w_V + [v, w_H])\\
			&= (w_H, w_V + [v, w_H] +[w_H, v])\\
			&= (w_H, w_V)
		\end{align*}
		therefore (inv. alg. inv.) holds.
	\end{proof}
\end{lemma}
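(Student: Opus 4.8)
The plan is to work entirely in the elementary coordinates supplied by the differential-object isomorphism $T(A) \cong A \times A$ of \ref{def:differential-object}, under which the bundle projection $p : T(A) \to A$ is the first projection and the flip map is the explicit formula $\alpha(v, w_H, w_V) = (v, w_V + [v, w_H])$ of \eqref{eq:master+diff-obj}. In these coordinates the prolongation $\prolong$ is simply $A \times A \times A$ (the pullback condition $\anc v = T(\pi) w$ being vacuous over the terminal base), the projections $\pi_0$ and $\pi_1$ pick out the first factor and the pair $(w_H, w_V)$ respectively, and the composite endomap $(p\pi_1, \alpha)$ that we must show squares to the identity sends $(v, w_H, w_V)$ to the prolongation element $(w_H, v, w_V + [v, w_H])$.

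First I would apply $\alpha$ once to obtain $\alpha(v, w_H, w_V) = (v, w_V + [v, w_H])$, and pair this with $p\pi_1(v, w_H, w_V) = w_H$ to form the prolongation element $(w_H, v, w_V + [v, w_H])$. Applying $\alpha$ a second time, now with $w_H$ occupying the base slot and $(v, w_V + [v, w_H])$ occupying the $T(A)$-argument, yields $(w_H, (w_V + [v, w_H]) + [w_H, v])$. The antisymmetry of the Lie bracket then collapses the two inner bracket terms, since $[w_H, v] = -[v, w_H]$, leaving $(w_H, w_V) = \pi_1(v, w_H, w_V)$, which is precisely the asserted identity $\alpha(p\pi_1, \alpha) = \pi_1$.

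The argument is a direct double application of the defining formula, so I expect no serious obstacle; the only point demanding care is the bookkeeping in the second application, namely correctly identifying which component plays the role of the base vector and which pair plays the role of the tangent argument $w$ when $\alpha$ is fed the element $(w_H, v, w_V + [v, w_H])$. Once the roles are assigned correctly, antisymmetry does all the work. I would remark that this mirrors the synthetic content of the groupoid case in \ref{lem:gpd-alpha-first-three}, where the same inverse axiom follows from a telescoping of conjugating inverses against $\alpha$; here that telescoping is replaced by the cancellation of $[v, w_H]$ against $[w_H, v]$.
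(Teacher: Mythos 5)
Your proposal is correct and is essentially identical to the paper's own proof: both apply the coordinate formula $\alpha(v, w_H, w_V) = (v, w_V + [v, w_H])$ twice, with $w_H$ moved into the base slot on the second application, and then cancel $[v, w_H]$ against $[w_H, v]$ by antisymmetry to obtain $(w_H, w_V) = \pi_1(v, w_H, w_V)$. Your extra remarks on the coordinate identification of $\prolong \cong A \times A \times A$ and the bookkeeping of the base versus tangent slots simply make explicit what the paper leaves implicit.
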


\begin{lemma}[Inv. alg. flip]
The $\alpha$ of \eqref{eq:master+diff-obj} satisfies:
	\[T(\alpha)(\alpha(v, w), x) = cT(\alpha)(\alpha(v, px), cT(\alpha)(w, cx))\]
	\begin{proof}
		On the one hand:
		\begin{align*}
			&T(\alpha)(\alpha(v, w_H, w_V), x_H, x_V, x'_H, x'_V)\\
			&= T(\alpha)(v, w_V+[v, w_H], x_H, x_V, x'_H, x'_V)\\
			&= (v, x_V+[v, x_H], w_V+[v, w_H], x'_V+[v, x'_H]+[w_V, x_H]+[[v, w_H], x_H])
		\end{align*}
		and on the other hand:
		\begin{align*}
			&cT(\alpha)(\alpha(v, x_H, x_V), cT(\alpha)(w_H, w_V, x_H, x'_H, x_V, x'_V))\\
			&= cT(\alpha)(v, x_V+[v, x_H], c(w_H, x'_H+[w_H, x_H], w_V, x'_V+[w_H, x_V]+[w_V, x_H]))\\
			&= cT(\alpha)(v, x_V+[v, x_H], w_H, w_V, x'_H+[w_H, x_H], x'_V+[w_H, x_V]+[w_V, x_H])\\
			&= c(v, w_V+[v, w_H], x_V+[v, x_H], z)\\
			&= (v, x_V+[v, x_H], w_V+[v, w_H], z)
		\end{align*}
		where
		\begin{align*}
			&z= x'_V+[w_H, x_V]+[w_V, x_H]+[v, x'_H + [w_H, x_H]]+[x_V+[v, x_H], w_H]\\
			&= x'_V+[v, x'_H]+[w_V, x_H]+[[v, w_H], x_H]
		\end{align*}
		where the last equality uses the anti-symmetry property and Jacobi identity of the Lie bracket.
		Now the required equality follows easily.
	\end{proof}
\end{lemma}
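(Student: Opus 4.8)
The plan is to verify the flip axiom by a direct coordinate computation in the identification $T(A) \cong A \times A$ furnished by Proposition \ref{def:differential-object}, which is precisely the setting in which $\alpha$ takes the closed form $\alpha(v, w_H, w_V) = (v, w_V + [v, w_H])$ of \eqref{eq:master+diff-obj}. The only auxiliary ingredient I would assemble first is the differentiated form of $\alpha$, namely $T(\alpha)(a,b,c,d,e,f) = (a, d + [a,c], b, f + [a,e] + [b,c])$, obtained by applying the Leibniz rule for the bilinear bracket to the formula for $\alpha$. Together with the fact that the canonical flip $c$ acts on $T^2(A) \cong A^4$ by transposing the two middle coordinates, this is all the machinery the computation requires. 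The strategy is then to evaluate both sides of the asserted identity as explicit elements of $T^2(A) \cong A^4$ and compare them coordinate by coordinate.

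Concretely, I would first expand the left-hand side $T(\alpha)(\alpha(v, w_H, w_V), x_H, x_V, x'_H, x'_V)$ by substituting $\alpha(v, w_H, w_V) = (v, w_V + [v, w_H])$ into the first two slots and then applying the formula for $T(\alpha)$. For the right-hand side I would work strictly from the inside out: evaluate the inner $T(\alpha)(w_H, w_V, x_H, x'_H, x_V, x'_V)$, apply $c$ to transpose its middle coordinates, separately compute $\alpha(v, x_H, x_V)$, feed both results into the outer $T(\alpha)$, and finally apply the outer $c$. The only real bookkeeping hazard is keeping straight which of the seven scalar arguments $v, w_H, w_V, x_H, x_V, x'_H, x'_V$ lands in which slot after each application of $c$, so I would carry the intermediate $4$-tuples explicitly rather than simplify prematurely.

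I expect the first three output coordinates to coincide by pure rearrangement, with the Lie structure never entering beyond bilinearity. The genuine obstacle is the fourth coordinate: on each side it is a sum of several nested brackets, and the two expressions become equal only after invoking anti-symmetry (to cancel the pair $[w_H, x_V]$ and $[x_V, w_H]$ produced on the right) and a single application of the Jacobi identity, which rewrites the double bracket $[[v, w_H], x_H]$ appearing on the left as $[[v, x_H], w_H] + [v, [w_H, x_H]]$ so as to match the right. Once this one identity is discharged, the two fourth coordinates agree and the flip axiom follows. I would therefore isolate the fourth-coordinate comparison as a standalone algebraic claim, prove it using Jacobi and anti-symmetry, and treat the remaining three coordinates as routine.
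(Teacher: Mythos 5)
Your proposal is correct and follows essentially the same route as the paper's own proof: a coordinate computation under $T(A)\cong A\times A$ using the differentiated formula $T(\alpha)(a,b,c,d,e,f) = (a, d+[a,c], b, f+[a,e]+[b,c])$, expanding both sides inside out, and resolving the fourth coordinate by exactly the cancellation of $[w_H,x_V]+[x_V,w_H]$ via anti-symmetry together with one application of the Jacobi identity to $[[v,w_H],x_H]$. No gaps; the paper's argument is precisely this.
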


\subsubsection{Equivalence of categories} % (fold)
\label{subsub:equivalence_of_categories}

Let $SInvAlg$ denote the category of involution algebras in the category $Man$ of smooth manifolds.
In this section we show that $SInvAlg$ is equivalent to the category of Lie algebras.

\begin{lemma}[Inclusion functor]\label{def:inclusion-functor}
	There is a functor $\iota:LieAlg \rightarrow SInvAlg$ that takes a Lie algebra $A$ to the involution algebra on $A$ defined by \eqref{eq:master+diff-obj} and that is the identity on arrows.
	\begin{proof}
		In \ref{subsub:inv-alg-of-lie-alg} we showed that $\iota(A)$ is an involution algebra.
		Next we check that if $f:A\rightarrow B$ is a morphism of Lie algebras then
		\begin{align*}
			\alpha_B(f\times f\times f)(a, b, c) &= \alpha_B(fa, fb, fc)\\
			&= (fb, [fa, fb] +fc)\\
			&= (f\times f)\alpha_A(a, b, c)
		\end{align*}
		and so $f$ is also a morphism of involution algebras.
		Finally it is clear that the assignment respects composition and identities.
	\end{proof}
\end{lemma}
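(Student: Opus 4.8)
The plan is to verify the three pieces of data that make $\iota$ a functor: that the object assignment lands in $SInvAlg$, that the arrow assignment is well defined, and that composition and identities are respected. The object assignment is essentially already handled. For any Lie algebra $A$, formula \eqref{eq:master+diff-obj} gives $\alpha(v, w_H, w_V) = (v, w_V + [v, w_H])$, and the sequence of lemmas in \ref{subsub:inv-alg-of-lie-alg} (bundle morphism I and II, inv. alg. unit, inv. alg. inv., inv. alg. flip) already checks that this $\alpha$ satisfies every involution algebra axiom over the terminal object. So I would begin by simply invoking those results to conclude that $\iota(A)$ is a genuine involution algebra.

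The substantive step is the arrow assignment. Given a morphism $f:A \rightarrow B$ of Lie algebras, I would show that $f$, regarded as the underlying map of differential objects, is a morphism of involution algebras in the sense of \ref{def:morphism-inv-algd}. Since the base is terminal, the anchor map targets $T(1)\cong 1$, so the anchor-preservation condition is vacuous and the only thing to check is the square relating $\alpha_A$ and $\alpha_B$. Under the identification $T(A)\cong A\times A$ of \ref{def:differential-object} the prolongation becomes $A\times A\times A$, the map $f_1\times T(f_1)$ becomes $f\times f\times f$, and the square collapses to the single identity $\alpha_B\circ(f\times f\times f) = (f\times f)\circ\alpha_A$. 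Expanding both sides via \eqref{eq:master+diff-obj} this is a one-line computation whose only nontrivial input is $f[a, w_H] = [fa, fw_H]$, i.e. that $f$ preserves the bracket. I would also note in passing that a linear map between vector spaces automatically respects the differential-object structure (so $T(f) = f\times f$, and zero and fibrewise addition are preserved), which is why $f$ is a morphism of the underlying differential bundles with no extra work.

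Finally, because $\iota$ is by definition the identity on arrows, functoriality is immediate: $\iota(g\circ f) = g\circ f = \iota(g)\circ\iota(f)$ and $\iota(\mathrm{id}_A) = \mathrm{id}_A$. I expect no genuine obstacle. The only point requiring a moment's care is confirming that the morphism conditions of \ref{def:morphism-inv-algd} really do collapse, in the terminal-base case, to that single $\alpha$-compatibility square, so that linearity together with bracket-preservation of $f$ is exactly what is needed and is sufficient.
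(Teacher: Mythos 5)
Your proposal is correct and follows essentially the same route as the paper: the object assignment is justified by citing the lemmas of \ref{subsub:inv-alg-of-lie-alg}, the arrow assignment reduces via \eqref{eq:master+diff-obj} to the single identity $\alpha_B\circ(f\times f\times f) = (f\times f)\circ\alpha_A$, whose only input is that $f$ preserves the bracket, and functoriality is immediate since $\iota$ is the identity on arrows. Your extra remarks --- that the anchor condition is vacuous over the terminal base and that linearity of $f$ gives $T(f)\cong f\times f$ --- are points the paper leaves implicit, but they do not change the argument.
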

Before we prove that $\iota$ induces an equivalence of categories we recall that we can express the bracket in terms of $\alpha$ using \eqref{eq:master-bracket-diff-obj}. 
Explicitly: if $A$ is an involution algebra with flip $\alpha$ we denote by $r(A)$ the Lie algebra with bracket given by $[x, y] = \pi_1\alpha(x, y, 0)$.

\begin{lemma}[Injective on objects]
	The functor defined in \ref{def:inclusion-functor} is injective on objects and determines a full subcategory of $SInvAlg$.
	\begin{proof}
		It has retraction $r$.
	\end{proof}
\end{lemma}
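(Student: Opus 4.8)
The plan is to verify that the map $r$ introduced just before the statement — sending an involution algebra with flip $\alpha$ to the Lie algebra with bracket $[x,y] = \pi_1\alpha(x,y,0)$ of \eqref{eq:master-bracket-diff-obj} — is a genuine retraction of $\iota$ on objects, and then to reuse that same formula to extract fullness. First I would check that $r\circ\iota$ is the identity on objects. Starting from a Lie algebra $A$ with bracket $[-,-]$, the involution algebra $\iota(A)$ has flip $\alpha(v, w_H, w_V) = (v, w_V + [v, w_H])$ by \eqref{eq:master+diff-obj}. Feeding this into $r$ and substituting $(x,y,0)$ gives $\pi_1\alpha(x, y, 0) = \pi_1(x, [x,y]) = [x,y]$, so the original bracket is recovered and $r(\iota(A)) = A$. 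Injectivity on objects is then immediate: if $\iota(A) = \iota(B)$, applying $r$ yields $A = B$.

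Second, for the full-subcategory claim, I would first observe that $\iota$ is faithful because it is the identity on arrows (by \ref{def:inclusion-functor}), so only fullness remains: every morphism of involution algebras $f:\iota(A)\to\iota(B)$ must be shown to coincide with $\iota$ of a Lie algebra morphism $A \to B$. Over the terminal base the anchor condition of \ref{def:morphism-inv-algd} is vacuous and the prolongation becomes $A\times A\times A$, so such an $f$ is precisely a linear map of the underlying differential objects satisfying $\alpha_B(f\times f\times f) = (f\times f)\alpha_A$. Using \eqref{eq:master-bracket-diff-obj} I would then compute
\[
f[x,y]_A = f\pi_1\alpha_A(x,y,0) = \pi_1(f\times f)\alpha_A(x,y,0) = \pi_1\alpha_B(fx, fy, 0) = [fx, fy]_B,
\]
so $f$ preserves the bracket and is therefore a morphism of Lie algebras whose image under $\iota$ is $f$ itself. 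Combined with faithfulness this shows $\iota$ restricts to a bijection on each hom-set onto its image, which is exactly the assertion that $\iota$ determines a full subcategory.

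I expect no serious obstacle here. The injective-on-objects half is the one-line retraction argument, and fullness collapses to the single naturality computation above once one unwinds that a morphism in $SInvAlg$ over the terminal object is nothing more than a linear map commuting with $\alpha$. The only point genuinely requiring care is verifying that the anchored-bundle morphism conditions of \ref{def:morphism-inv-algd} degenerate to plain linearity in the trivial-base case, since once that identification is in place the bracket computation carries the entire content of the fullness claim.
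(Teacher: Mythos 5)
Your first half is correct and is exactly the paper's argument: the paper's entire proof is ``it has retraction $r$,'' and your computation $\pi_1\alpha(x,y,0)=\pi_1(x,[x,y])=[x,y]$ is precisely the verification that $r\circ\iota=\mathrm{id}$ on objects, from which injectivity follows. The genuine gap is in the fullness half, and it sits exactly at the point you set aside as ``only requiring care'': the assertion that the conditions of \ref{def:morphism-inv-algd} degenerate to plain linearity over the terminal base. They do not. In this paper a morphism of differential bundles is merely a commuting square; linearity (preservation of the lift $\lambda$) is an \emph{extra} property, and over the base $1$ both the bundle square and the anchor condition are vacuous (since $T(1)=1$ in $Man$). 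So a morphism $\iota(A)\to\iota(B)$ in $SInvAlg$ is just a smooth map $f:A\to B$ satisfying $\alpha_B\circ(f\times T(f))=T(f)\circ\alpha_A$, where under $T(A)\cong A\times A$ one has $T(f)(w_H,w_V)=(f(w_H),\,Df_{w_H}(w_V))$, which is \emph{not} $f\times f$ unless $f$ is already linear. Your displayed computation silently substitutes $f\times f$ for $T(f)$, i.e.\ assumes the linearity it was supposed to establish.

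Moreover this step cannot be recovered by more careful unwinding, because it is false under the paper's stated definitions. Take $A=B=\mathbb{R}$ with the zero bracket, so $\alpha(v,w_H,w_V)=(v,w_V)$, and let $f(x)=x+1$. Then $T(f)(w_H,w_V)=(w_H+1,w_V)$, and both sides of the morphism condition evaluate to $(v+1,w_V)$, so $f$ is a morphism of involution algebras in the sense of \ref{def:morphism-inv-algd}; it is not a Lie algebra homomorphism. (More generally, translation by any central element of a Lie algebra commutes with the flip of \eqref{eq:master+diff-obj}.) What the flip-equivariance of a general smooth $f$ actually yields, evaluating at $(x,y,0)$, is $[fx,fy]_B=\pi_1 T(f)(x,[x,y]_A)$, i.e.\ the derivative of $f$ at $x$ applied to $[x,y]_A$ --- strictly weaker than bracket preservation. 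So fullness requires restricting the morphisms of $SInvAlg$ to \emph{linear} (lift-preserving) bundle morphisms, which is evidently the paper's implicit convention (its own one-line proof is silent on fullness, and its functor $\iota$ is only checked on linear maps in \ref{def:inclusion-functor}); under that convention $T(f)=f\times f$ is justified, your bracket computation is correct, and it completes the proof. Without that convention, no argument can close the gap, because the statement itself fails.
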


\begin{lemma}[Essentially surjective]
	The functor defined in \ref{def:inclusion-functor} is essentially surjective.
	\begin{proof}
		Suppose that $A$ is an involution algebra in $Man$ with flip $\alpha$.
		Using the results of \ref{subsubsec:the_lie_bracket_of_an_involution_algebroid} we know that $r(A)$ is a Lie algebra.
		But $\iota(r(A))$ has flip that sends $(a, b, c)$ to:
		\begin{align*}
			(b, \pi_1\alpha(a, b, 0)+c) &= (b, \pi_1\alpha(a, b, 0)+\pi_1\alpha(a, 0, c) )\tag{inv. alg. unit}\\
			&= (b, \pi_1\alpha(a, b, 0)+\pi_1\alpha(a, 0, 0)+\pi_1\alpha(0, 0, c))\tag{$\alpha$ linear}\\
			&= (b, \pi_1\alpha(a, b, 0)+\pi_1\alpha(a, 0, c))\\
			&= \alpha(a, b, c)
		\end{align*}
		therefore $A = \iota(r(A))$ and so $\iota$ is essentially surjective.
	\end{proof}
\end{lemma}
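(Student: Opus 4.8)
The plan is to prove the stronger statement that $A = \iota(r(A))$ for every involution algebra $A$, which immediately yields essential surjectivity (indeed surjectivity on objects). First I would invoke the results of \ref{subsubsec:the_lie_bracket_of_an_involution_algebroid} to see that $r(A)$, whose bracket is $[a,b] = \pi_1\alpha(a,b,0)$ by \eqref{eq:master-bracket-diff-obj}, is genuinely a Lie algebra. By \eqref{eq:master+diff-obj} the flip of $\iota(r(A))$ is then
\begin{equation*}
\beta(a,b,c) = (a,\ c + \pi_1\alpha(a,b,0)),
\end{equation*}
so the whole problem reduces to checking $\alpha = \beta$ as maps $A\times A\times A \to A\times A$.

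Next I would compare the two coordinates of $\alpha(a,b,c)$ separately. For the first coordinate, the fact that $(\alpha, id)$ is a bundle morphism $(\prolong, \pi_0) \Rightarrow (T(A), p)$ gives $p\alpha = \pi_0$, i.e. $\pi_0\alpha(a,b,c) = a$, matching the first coordinate of $\beta$. For the second coordinate I must establish $\pi_1\alpha(a,b,c) = c + \pi_1\alpha(a,b,0)$, and the idea is to peel the arguments apart using additivity of $\alpha$ in the \emph{two} bundle structures. Additivity of $(\alpha, id)$ in the fibres of $\pi_0$ (splitting $(b,c) = (b,0) + (0,c)$) gives
\begin{equation*}
\pi_1\alpha(a,b,c) = \pi_1\alpha(a,b,0) + \pi_1\alpha(a,0,c),
\end{equation*}
so it remains only to identify $\pi_1\alpha(a,0,c) = c$, equivalently $\alpha(a,0,c) = (a,c)$.

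This last identity is where the two bundle structures genuinely interact, and I expect it to be the main obstacle. The element $(a,0,c)$ lies in the fibre over $w_H = 0$ of the second bundle $(\prolong, p\pi_1)$, where it splits as a sum of $(a,0,0)$ and $(0,0,c)$; applying additivity of the morphism $(\alpha, !):(\prolong, p\pi_1) \Rightarrow (T(A), !)$ yields $\alpha(a,0,c) = \alpha(a,0,0) +_{T\pi} \alpha(0,0,c)$. Now $\alpha(a,0,0) = (a,0)$ because $(a,0,0)$ is the zero of the \emph{first} bundle over $a$ and $(\alpha, id)$ preserves zeros, while $\alpha(0,0,c) = (0,c)$ is exactly (inv. alg. unit); since $+_{T\pi}$ is componentwise in the coordinates $T(A)\cong A\times A$, their sum is $(a,c)$ as required. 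The delicate point is purely bookkeeping: one must correctly read off from the two remarks in \ref{sub:anchored_bundles_and_prolongations} that the zero sections are $a\mapsto(a,0,0)$ for $\pi_0$ and $a\mapsto(0,a,0)$ for $p\pi_1$, and then take each splitting and each zero in the appropriate structure — the unit axiom only controls the base point $0$, so the second linearity is essential to propagate it to an arbitrary base point $a$.

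Assembling these observations gives $\alpha(a,b,c) = (a,\ c + \pi_1\alpha(a,b,0)) = \beta(a,b,c)$, hence $A = \iota(r(A))$ with $r(A)$ a Lie algebra. Thus every object of $SInvAlg$ lies (on the nose, hence a fortiori up to isomorphism) in the image of $\iota$, and $\iota$ is essentially surjective.
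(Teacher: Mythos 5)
Your proposal is correct and follows essentially the same route as the paper: both establish the stronger identity $A = \iota(r(A))$ by reducing $\alpha(a,b,c)$ to $(a,\ \pi_1\alpha(a,b,0)+c)$ via additivity of $\alpha$ in the two bundle structures on $\prolong$, zero preservation, and the unit axiom. If anything, your bookkeeping is more precise than the paper's compressed chain of equalities (whose first coordinate $b$ appears to be a typo for $a$ and whose tags are slightly misplaced), since you state explicitly which of the two bundle structures justifies each splitting.
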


\begin{corollary}\label{cor:equiv-with-lie-algebras}
	The category of Lie algebras is equivalent to the category of involution algebras in $Man$.
\end{corollary}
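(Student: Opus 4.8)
The plan is to invoke the standard characterisation of an equivalence: a functor is an equivalence of categories precisely when it is full, faithful and essentially surjective. I would verify each of these three properties for the functor $\iota:LieAlg \rightarrow SInvAlg$ of \ref{def:inclusion-functor}, simply assembling the three preceding lemmas. Faithfulness is immediate: by \ref{def:inclusion-functor} the functor $\iota$ is the identity on arrows, so two Lie algebra morphisms with the same image under $\iota$ are literally equal (equivalently, the retraction $r$ with $r\iota = \mathrm{id}$ witnesses injectivity on hom-sets). Essential surjectivity is exactly the content of the preceding lemma, which in fact establishes the stronger statement that $A = \iota(r(A))$ on the nose; so every object of $SInvAlg$ is not merely isomorphic to, but equal to, an object in the image of $\iota$.

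The remaining property, fullness, is where the real work sits and is the content of the ``full subcategory'' lemma. Here I would take an arbitrary morphism $g:\iota(A) \rightarrow \iota(B)$ of involution algebras and exhibit a Lie algebra morphism $h$ with $\iota(h) = g$. Since $g$ is in particular a morphism of the underlying differential objects its underlying map should be a linear map $A \rightarrow B$, and applying the retraction $r$ (again the identity on underlying arrows) produces $h := r(g):A \to B$. The naturality square for $\alpha$, namely $T(h)\alpha_A = \alpha_B(h\times T(h))$, then guarantees that $h$ preserves the bracket $[x,y] = \pi_1\alpha(x,y,0)$ and so is a genuine Lie algebra morphism; because morphisms of involution algebras over the terminal object are determined by their underlying maps and $\iota$ is the identity on arrows, we get $\iota(h) = g$.

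Concretely, the bracket-preservation underlying fullness is the same computation as in \ref{def:inclusion-functor}: evaluating $T(h)\alpha_A = \alpha_B(h\times T(h))$ on $(v, w_H, w_V)$ using $\alpha(v, w_H, w_V) = (v, w_V + [v, w_H])$, together with the fact that the derivative of the linear map $h$ is $h$ itself, reduces the condition exactly to $h[v, w_H]_A = [hv, hw_H]_B$. The one point requiring care — and the main obstacle — is confirming that every involution-algebra morphism is additive so that this reduction (in particular the identification of its tangent map $T(h)$ with $h$ on fibres) is valid; once that is in hand the three lemmas assemble at once. I would also remark that, since \ref{def:inclusion-functor} and the essential-surjectivity lemma together show $\iota$ and $r$ are mutually inverse on objects and both the identity on arrows, one in fact obtains an isomorphism of categories, of which the stated equivalence is an immediate consequence.
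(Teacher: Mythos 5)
Your overall route is the same as the paper's: the corollary is obtained by assembling \ref{def:inclusion-functor}, the injectivity/full-subcategory lemma and the essential-surjectivity lemma into the standard criterion that a full, faithful, essentially surjective functor is an equivalence. Your faithfulness and essential-surjectivity arguments agree with the paper's, and your closing remark that one in fact gets an isomorphism of categories is consistent with the paper's on-the-nose equality $A = \iota(r(A))$.

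The genuine gap is in fullness, and it is exactly the point you flag but do not resolve: nothing in the paper's definitions makes the underlying map of a morphism $g:\iota(A)\to\iota(B)$ in $SInvAlg$ linear, or even additive. By \ref{def:morphism-inv-algd} such a $g$ is a morphism of the underlying anchored bundles commuting with the involutions, and a morphism of anchored bundles is only required to be a morphism of differential bundles (a commuting square, with no linearity condition) preserving the anchor; over the terminal object both of those requirements are vacuous, so $g$ is a priori an arbitrary smooth map satisfying $T(g)\alpha_A = \alpha_B(g\times T(g))$. That equation does not force linearity: in the coordinates of \eqref{eq:master+diff-obj} it reads
\[
  Dg|_{v}\left(w_V + [v,w_H]_A\right) = Dg|_{w_H}(w_V) + [g(v), g(w_H)]_B ,
\]
and when $A = B$ is abelian this says only that $Dg$ is constant, i.e.\ that $g$ is affine; a translation $x \mapsto x + c$ with $c \neq 0$ is then a morphism of involution algebras that is not a Lie algebra morphism. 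So your assertion that $g$ ``should be a linear map'' because it is a morphism of the underlying differential objects is unjustified, and fullness actually fails unless the definition of morphism is strengthened to require the underlying differential bundle morphism to be linear (with that amendment, your reduction of the naturality square to $g[v,w_H]_A = [gv, gw_H]_B$ is correct). You are in good company here: the paper's own proof of fullness, ``It has retraction $r$,'' has the same defect, since $r$ is constructed only on objects and the claim that it acts as the identity on arrows presupposes precisely the linearity in question.
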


% subsubsection equivalence_with_lie_algebras (end)

\section{Homotopy theory of involution algebroids} % (fold)
\label{sec:the_homotopy_theory_of_involution_algebroids}

In section 5.1 of \cite{MR1973056} the \emph{Weinstein local groupoid} of a Lie algebroid is constructed in an analogous way to how the fundamental groupoid is constructed from a manifold.
The appropriate definition of paths and homotopies in a Lie algebroid required for this construction are the \emph{admissible paths} and \emph{admissible homotopies} of section 1 in \cite{MR1973056}.
In fact this construction gives an equivalence of categories
\begin{equation*}
\begin{tikzcd}
	LieAlgd \rar[yshift=0.3cm]{w} \rar[phantom]{\perp} & LocGpd \lar[yshift=-0.3cm]{alg}
\end{tikzcd}
\end{equation*}
where $alg$ is the extension of section 3.5 of \cite{MR2157566} to local Lie groupoids and $w$ is the Weinstein local groupoid construction.
Therefore one approach to understanding the morphisms in $LieAlgd$ is to understand the homotopy theory of Lie algebroids.

In this section we describe the homotopy theory associated to involution algebroids.
Moreover we analyse some special cases of paths and homotopies in involution algebroids that arise in the composite $alg \circ w$ in the diagram above.
In a future paper we complete this picture by examining the extra assumptions required to integrate involution algebroids to groupoids.
In this section we will assume the existence of various function spaces as we need them.
Again this assumption is not an unreasonable one due to the embedding theorem of \cite{MR3725887}.
In this section we also require the use of a complete curve object $R$ as described in \ref{sub:curve_objects} that is also a unit object.
(So for instance $R$ could be $\mathbb{R}$ in the category of smooth manifolds.)

In \ref{sub:transport_along_a_paths} we describe how to transport elements of an algebroid along an infinitesimal variation of A-paths.
Our definition of A-path in \ref{def:A-path} is the direct translation of the classical idea found in 1.1 of \cite{MR1973056}.
By contrast to define an A-homotopy in an involution algebroid we use the flip map $\alpha:\prolong \rightarrow T(A)$ directly.
In this way we avoid using an integral (or directly appealing to the existence of a connection) as in 1.3 of \cite{MR1973056}.
In \ref{sub:transport_along_a_homotopies} we describe how to transport elements of an algebroid along an infinitesimal variation of A-homotopies.
Here we also demonstrate that this transport is `path independent' in an analogous way to how parallel transport with respect to a flat connection is independent of the path taken.
In \ref{sub:infinitesimal_a_paths} we specialise the construction of \ref{sub:transport_along_a_paths} to `infinitesimal A-paths' which arise in the composite $alg\circ w$ above.
We show that an infinitesimal A-path in $A$ is the same as a path in a fibre of $A$ starting at zero.
In \ref{sub:infinitesimal_a_homotopies} we specialise the construction of \ref{sub:transport_along_a_homotopies} to the `infinitesimal A-homotopies' which arise in the composite $alg\circ w$.
Again we show that an infinitesimal A-homotopy in $A$ is the same as a homotopy in a fibre of $A$ which starts at zero.
We leave for future work the natural next step of taking the quotient of the A-paths by the A-homotopies and identifying the extra assumptions necessary to carry out the Weinstein groupoid construction in this context.

% \subsection{Transport along A-paths and A-homotopies} % (fold)
% \label{sec:transport_along_a_paths_and_a_homotopies}

% section transport_along_a_paths_and_a_homotopies (end)

\subsection{Transport along A-paths} % (fold)
\label{sub:transport_along_a_paths}

An A-path (as described in for instance 1.1 of \cite{MR1973056}) is a path in the total space of an algebroid for which the anchor coincides with the projection of the derivative of the source.
Intuitively we think of this condition as saying that it is possible to post-compose an element $a(x_0)$ with the element $a(x_0 +h)$ where $a$ is an A-path and $h$ infinitesimally small.

Now we describe how to transport an element $a\in A$ along a variation of A-paths (a tangent vector to the space of A-paths).
Roughly speaking the $\alpha:\prolong \rightarrow T(A)$ of an involution algebroid gives a way of `infinitesimally transporting' an element $a\in A$ along an element $w\in T(A)$ to produce an element of $T(A)$ based at $a$.
Then we use a complete curve object as described in \ref{sub:curve_objects} to extend this infinitesimal transport to a full transport along an infinitesimal variation of A-paths.
The following definition is in 1.1 of \cite{MR1973056}.

\begin{definition}[Admissible path]\label{def:A-path}
	An \emph{admissible path in $A$ (or A-path)} is an arrow $a:R \rightarrow A$ such that $\anc a = T(\pi)T(a)\partial$ where $\partial$ is as in \ref{def:complete-curve-object}.
	We write $APath$ for the object of admissible paths of $A$.
\end{definition}

\begin{definition}[Tangent to admissible paths]
	The object $APath^D$ is the subobject of $T(A)^R$ of $\phi\in T(A)^R$ such that $T(\anc)\phi = c T^2(\pi)T(\phi)\partial$.
\end{definition}

At this stage we cannot immediately define the transport of A-paths as a solution to a vector field of type $X:A \rightarrow T(A)$. 
This would require being able to extend a variation of A-paths $\phi$ to a vector field on $A$.
Classically we can extend $\phi$ to a time-dependent vector field on $A$ but in a tangent category we cannot guarantee that this extension exists.
However if we assume the existence of a line object $R$ with unit $u$ and zero $0_R$ as described in \ref{sec:representable-units-function-algebras} we can produce the vector field we want on the following pullback.

\begin{definition}[Bundle of composables]
	If $\phi\in APath^D$ then \emph{the bundle $A_{\phi}$ of arrows post-composable with $\phi$} is the pullback
	\begin{equation*}
	\begin{tikzcd}[column sep = 2.5cm]
		(A_{\phi},p \pi_0) \dar{\pi_0} \rar{\pi_1} & (R\ts{T(\pi)\phi}{\pi}A, T(\pi)) \dar{(id\times \anc, id)}\\
		(R\times R, \pi_0) \rar{((\pi_0, \pi_1\bullet_p T(\pi)\phi\pi_0),id)} & (R\ts{T(\pi)\phi}{p}T(M), \pi_0)
	\end{tikzcd}
	\end{equation*}
	in the category of differential bundles.
\end{definition}

\begin{remark}
	We regard $A_{\phi}$ as a subobject $(\pi_0, \pi_1, \pi_2):A_{\phi} \rightarrowtail R\times R\times A$.
	using this decomposition the lift is $0\times \lambda_R\times \lambda$.
\end{remark}

\begin{lemma}\label{lem:infinitesimal-compn-vector-field}
	The arrow $X: A_{\phi} \rightarrow T(A_{\phi})$ defined by 
	\[X = (\partial\pi_0, 0\pi_1, \alpha(\pi_2, \pi_1\bullet_p\phi\pi_0))\]
	is a well-defined vector field.
	\begin{proof}
	It is routine to check that $X$ is a vector field and that the expression $\alpha(\pi_2, \pi_1\bullet_p \phi\pi_0)$ is well-typed.
	Now we check $X$ has codomain $T(A_{\phi})$:
	\begin{align*}
		T(\anc) \alpha(\pi_2, \pi_1\bullet_p \phi\pi_0) &= cT(\anc)(\pi_1\bullet_p \phi\pi_0)\\
		&= 0\pi_1\bullet^T_p cT(\anc)\phi\pi_0\tag{$c$ and $T(\anc)$ linear}\\
		&= T(\pi_1\bullet_p T(\pi)\phi \pi_0)(\partial\pi_0, 0\pi_1)\tag{A-path}
	\end{align*}
	and so $X$ is well-defined.
	\end{proof}
\end{lemma}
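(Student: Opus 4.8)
The plan is to read off from the defining pullback that $A_\phi$ is the subobject $(\pi_0,\pi_1,\pi_2)\colon A_\phi \rightarrowtail R\times R\times A$ cut out by the single equation
\[
\anc\pi_2 = \pi_1\bullet_p T(\pi)\phi\pi_0,
\]
viewed as a pair of maps $A_\phi \to T(M)$. (The accompanying basepoint condition $\pi\pi_2 = pT(\pi)\phi\pi_0$ is then automatic, since $p\anc = \pi$ and scalar multiplication preserves basepoints.) With this description in hand the two `routine' claims are quick. The pair $(\pi_2,\pi_1\bullet_p\phi\pi_0)$ lands in $\prolong$ precisely because $T(\pi)$ is linear, so $T(\pi)(\pi_1\bullet_p\phi\pi_0)=\pi_1\bullet_p T(\pi)\phi\pi_0=\anc\pi_2$, which is exactly the constraint defining $A_\phi$; and $X$ is a vector field because its three components project under $p$ to $\pi_0,\pi_1,\pi_2$ respectively (using that $\partial$ and $0$ are sections of $p$, and that (inv. algd. 0) gives $p\,\alpha(\pi_2,\cdots)=\pi_2$), so that $pX=\mathrm{id}_{A_\phi}$.

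The substance of the lemma is the codomain check. Since $T$ preserves the pullback defining $A_\phi$, the object $T(A_\phi)$ is the subobject of $TR\times TR\times TA$ cut out by the $T$-image of the defining equation. Hence $X$ factors through $T(A_\phi)$ if and only if $T(\anc)$ applied to the third component of $X$ agrees with $T(\pi_1\bullet_p T(\pi)\phi\pi_0)$ applied to its first two components $(\partial\pi_0,0\pi_1)$; that is, if and only if
\[
T(\anc)\alpha(\pi_2,\pi_1\bullet_p\phi\pi_0)=T(\pi_1\bullet_p T(\pi)\phi\pi_0)(\partial\pi_0,0\pi_1).
\]
I would establish this in three moves. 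First I would rewrite the left-hand side as $cT(\anc)(\pi_1\bullet_p\phi\pi_0)$ using (inv. algd. target). Next I would use that $c$ and $T(\anc)$ are linear bundle morphisms to pull the scalar past them, arriving at $0\pi_1\bullet^T_p cT(\anc)\phi\pi_0$. Finally I would invoke the defining identity $T(\anc)\phi=cT^2(\pi)T(\phi)\partial$ of $APath^D$ to recognise the result as the differentiated scalar multiplication $T(\pi_1\bullet_p T(\pi)\phi\pi_0)(\partial\pi_0,0\pi_1)$.

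The main obstacle I expect is the scalar-multiplication bookkeeping in the middle move. One has to track carefully how $\bullet_p$ on $T(A)$ is intertwined by the linear maps $c$ and $T(\anc)$, and, crucially, that after differentiating the constraint the scalar is applied through the derivative multiplication $\bullet^T_p$ with the $TR$-component $0\pi_1$ (which is $T(\pi_1)X$) rather than through the undifferentiated $\bullet_p$. Correctly identifying the pullback presentation of $T(A_\phi)$, and matching which components of $X$ feed each side of the $T$-ed constraint, is where the care lies; once that is pinned down, the three displayed equalities follow immediately from (inv. algd. target), linearity, and the A-path condition respectively.
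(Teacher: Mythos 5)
Your proposal is correct and follows essentially the same route as the paper: the codomain check via (inv.\ algd.\ target), linearity of $c$ and $T(\anc)$, and the defining identity of $APath^D$ reproduces the paper's three-line computation exactly, including the target equality $T(\anc)\alpha(\pi_2,\pi_1\bullet_p\phi\pi_0)=T(\pi_1\bullet_p T(\pi)\phi\pi_0)(\partial\pi_0,0\pi_1)$. The only difference is that you spell out the ``routine'' parts (the subobject presentation of $A_\phi$, well-typedness of the argument of $\alpha$, and $pX=\mathrm{id}$) that the paper leaves to the reader, which is a welcome addition rather than a divergence.
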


\begin{lemma}[Linearity of infinitesimal composition]
	The vector field $X$ of \ref{lem:infinitesimal-compn-vector-field} is linear over $\partial: R \rightarrow T(R)$ in the sense of \ref{def:linear-vector-field}.
	\begin{proof}
		It is routine to check that $X$ and $\partial$ commute with the projections.
		For linearity we check
		\begin{align*}
			&T(X)(0\times \lambda_R \times \lambda )\\
			&= (T(\partial)0\pi_0, T(0)\lambda_R\pi_1, T(\alpha)(\lambda\pi_2, \lambda_R\pi_1\bullet^T_p T(\phi)0\pi_0))\\
			&= (0\partial, T(0)\lambda_R\pi_1, T(\alpha)(\lambda\pi_2, l(\pi_1\bullet_p \phi\pi_0)))\tag{$\bullet$ linear}\\
			&=(0\partial, cT(\lambda_R)0\pi_1, cT(\lambda)\alpha(\pi_2, \pi_1\bullet_p \phi\pi_0))\tag{$\alpha$ linear}\\
			&= (0\times cT(\lambda_R) \times cT(\lambda))X
		\end{align*}
		and so $X$ is linear over $\partial$.
	\end{proof}
\end{lemma}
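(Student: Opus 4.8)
The plan is to unwind Definition \ref{def:linear-vector-field}: to say $(X,\partial)$ is a linear vector field I must exhibit it as a \emph{linear} morphism of differential bundles from $\pi_0 : A_\phi \to R$ to its tangent bundle, the vector-field halves of the data being already supplied by \ref{lem:infinitesimal-compn-vector-field}. I would work throughout in the subobject presentation $(\pi_0,\pi_1,\pi_2) : A_\phi \rightarrowtail R\times R\times A$, in which the lift is $\lambda_{A_\phi} = 0\times\lambda_R\times\lambda$. The underlying-square (bundle-morphism) condition $T(\pi_0)X = \partial\pi_0$ is then immediate, since the first coordinate of $X = (\partial\pi_0,\,0\pi_1,\,\alpha(\pi_2,\pi_1\bullet_p\phi\pi_0))$ is exactly $\partial\pi_0$; this is the ``commutes with the projections'' observation. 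The whole remaining content is therefore the linearity (lift-preservation) identity.

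By \ref{lem:T-of-diff-bundle} the lift of the tangent bundle $T(A_\phi)$ is $cT(\lambda_{A_\phi})$, so linearity amounts to the equation
\[
 T(X)\,\lambda_{A_\phi} \;=\; cT(\lambda_{A_\phi})\,X .
\]
I would evaluate the left-hand side one coordinate at a time, pushing $T$ through the definition of $X$ and then contracting each coordinate of $T(X)$ against $\lambda_{A_\phi} = 0\times\lambda_R\times\lambda$. This yields the triple $\bigl(T(\partial)0\pi_0,\; T(0)\lambda_R\pi_1,\; T(\alpha)(\lambda\pi_2,\; \lambda_R\pi_1\bullet^T_p T(\phi)0\pi_0)\bigr)$. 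On the right-hand side, using the standard identity $cT(0)=0$ on the base factor, the lift $cT(\lambda_{A_\phi})$ simplifies to $0\times cT(\lambda_R)\times cT(\lambda)$. The first two coordinates are then matched by routine coherences: naturality of $0$ gives $T(\partial)0\pi_0 = 0\partial\pi_0$, and the lift coherence $T(0)\lambda_R = cT(\lambda_R)0$ handles the second slot.

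The third coordinate is the crux and where I expect the genuine work. First I would use that scalar multiplication $\bullet_p$ is linear to collapse the derivative $\lambda_R\pi_1\bullet^T_p T(\phi)0\pi_0$ into a single vertical lift $l(\pi_1\bullet_p\phi\pi_0)$; the care here is in checking that the $\lambda_R$-lift on the scalar argument and the $0$-section on $T(\phi)\pi_0$ combine to the lift $l$ on the output. Once this is done, the pair $(\lambda\pi_2,\, l(\pi_1\bullet_p\phi\pi_0))$ is visibly the image of $(\pi_2,\pi_1\bullet_p\phi\pi_0)$ under the lift $\lambda\times l$ of the prolongation bundle $(\prolong, p\pi_1)$, so I can invoke the defining linearity of $\alpha$ over $\anc$ from Definition \ref{def:inv-algd} — that $(\alpha,\anc):(\prolong,p\pi_1)\Rightarrow(T(A),T(\pi))$ is a linear bundle morphism — to rewrite $T(\alpha)(\lambda\pi_2,\,l(\pi_1\bullet_p\phi\pi_0))$ as $cT(\lambda)\alpha(\pi_2,\pi_1\bullet_p\phi\pi_0)$, the third slot of $cT(\lambda_{A_\phi})X$. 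The main obstacle is exactly this chaining: keeping track of the canonical flip $c$ as it is produced first by the linearity of $\bullet_p$ and then by the linearity of $\alpha$, and confirming that the two compose to give precisely the tangent lift $cT(\lambda)$ demanded on the output. Assembling the three coordinates then gives $T(X)\lambda_{A_\phi} = cT(\lambda_{A_\phi})X$, which is the required linearity.
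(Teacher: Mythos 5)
Your proposal is correct and follows essentially the same route as the paper: the same coordinate-wise computation of $T(X)(0\times\lambda_R\times\lambda)$, the same use of linearity of $\bullet_p$ to collapse $\lambda_R\pi_1\bullet^T_p T(\phi)0\pi_0$ into $l(\pi_1\bullet_p\phi\pi_0)$, and the same appeal to the linearity of $(\alpha,\anc):(\prolong,p\pi_1)\Rightarrow(T(A),T(\pi))$ with lift $\lambda\times l$ to produce the third slot $cT(\lambda)\alpha(\pi_2,\pi_1\bullet_p\phi\pi_0)$. Your explicit identification of the second-coordinate step as the lift coherence $T(0)\lambda_R = cT(\lambda_R)0$ is in fact slightly cleaner bookkeeping than the paper's, which folds it under the ``$\alpha$ linear'' tag.
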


Since $X$ is linear over $\partial$ and $\partial$ is a complete vector field with solution $id: R \rightarrow R$ we can apply \ref{def:complete-curve-object} to obtain a complete solution for $X$.
So let $a\in A$ and $\phi\in APath^D$ such that $\anc a = T(\pi)\phi0_R$.
Further let $\Psi_a$ be the solution of $X$ starting at $(0_R, u, a)$.
Now we work out the properties of $\pi_0\Psi_a$ and $\pi_1\Psi_a$.
First 
\[T(\pi_0\Psi_a)\partial = \pi_0 T(\Psi_a)\partial = \pi_0 X\Psi_a = \partial \pi_0\Psi_a\]
and $\pi_0\Psi_a 0_R = 0_R$ therefore $\pi_0\Psi_a = id$.
Second
\[T(\pi_1\Psi_a)\partial = \pi_1 T(\Psi_a)\partial = \pi_1 X\Psi_a = 0\pi_1\Psi_a\]
and $\pi_1\Psi_a 0_R = u$ so $\pi_1\Psi_a = u$.
The projection $\pi_2\Psi_a$ is the transport that we want.

\begin{definition}[Infinite composition]\label{def:infinite-composition}
	If $a\in A$ and $\phi\in APath^D$ then \emph{the infinite composite $\psi_a:R \rightarrow A$ of $\phi$ starting at $a$} is $\pi_2\Psi_a$.
	In other words $\psi_a$ is the solution to $X$ starting at $(0, u, a)$ followed by $\pi_2$.
\end{definition}

The following lemma presents the properties of $\psi_a$ that we use in the sequel.

\begin{lemma}
	If $a\in A$ and $\phi \in APath^D$ then
	\begin{align*}
		\psi_a(0_R) &= a \tag{initial condition}\\
		T(\psi_a)\partial &= \alpha(\psi_a, \phi) \tag{solution to vector field}
	\end{align*}
	where $\psi_{a}$ is as defined in \ref{def:infinite-composition}.
	\begin{proof}
		Since $\Psi_a$ is the solution to $X$ starting at $(0, u, a)$ then
		\begin{align*}
			T(\psi_a)\partial &= \pi_2T(\Psi_a)\partial\\
			&= \pi_2X\Psi_a \\
			&= \alpha(\pi_2\Psi_a, \pi_1\Psi_a\bullet_p \phi \pi_0\Psi_a)\\
			&= \alpha(\psi_a, u\bullet_p \phi)
		\end{align*}
		where the last equality uses the characterisation of $\pi_0\Psi_a$ and $\pi_1\Psi_a$ established previously.
		For the initial condition: $\psi_a 0_R = \pi_2\Psi_a 0_R = a$.
	\end{proof}
\end{lemma}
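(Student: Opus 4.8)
The plan is to read both identities straight off the construction of $\psi_a = \pi_2\Psi_a$, where $\Psi_a : R \to A_\phi$ is the complete solution to the vector field $X$ of \ref{lem:infinitesimal-compn-vector-field} with initial condition $(0_R, u, a)$. The two ingredients are the defining equation of a solution, $T(\Psi_a)\partial = X\Psi_a$, together with the characterisations $\pi_0\Psi_a = id$ and $\pi_1\Psi_a = u$ that were computed just before the statement from the first two components of $X$ (the solution to $\partial$ starting at $0_R$ is the identity, and the solution to the zero field starting at $u$ is constant).

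First I would dispose of the initial condition. Since $A_\phi$ sits inside $R\times R\times A$ via $(\pi_0,\pi_1,\pi_2)$ and $\Psi_a$ begins at $(0_R, u, a)$, we get $\psi_a 0_R = \pi_2\Psi_a 0_R = \pi_2(0_R,u,a) = a$ with no further work.

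For the vector-field identity I would start from $T(\Psi_a)\partial = X\Psi_a$ and project onto the third, $A$-valued, factor. Since the third component of $X$ is $\alpha(\pi_2, \pi_1\bullet_p\phi\pi_0)$, this gives
\[
T(\psi_a)\partial \;=\; \alpha\big(\pi_2\Psi_a,\; (\pi_1\Psi_a)\bullet_p\,\phi(\pi_0\Psi_a)\big).
\]
Feeding in $\pi_0\Psi_a = id$ and $\pi_1\Psi_a = u$ collapses the first argument to $\psi_a$ and the second to $u\bullet_p\phi$, so that $T(\psi_a)\partial = \alpha(\psi_a, u\bullet_p\phi)$.

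The only point I expect to require care is the final identification $u\bullet_p\phi = \phi$ needed to match the statement. This is precisely the fact that scalar multiplication by the unit point $u:1\to R$ is the identity, which comes from the universal property of the unit object in \ref{sec:representable-units-function-algebras} (the canonical lift $\hat{id}$ of $id$ restricts to $id$ along the unit section). All the substantive work — the existence of the complete solution $\Psi_a$ via the curve-object axiom and the computation of $\pi_0\Psi_a$ and $\pi_1\Psi_a$ — has already been carried out, so beyond this identification the argument is purely a matter of substitution and no genuine obstacle remains.
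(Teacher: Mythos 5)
Your proof is correct and follows essentially the same route as the paper's: project the defining equation $T(\Psi_a)\partial = X\Psi_a$ onto the third factor and substitute the previously established characterisations $\pi_0\Psi_a = id$ and $\pi_1\Psi_a = u$. You are in fact slightly more careful than the paper, whose proof stops at $\alpha(\psi_a, u\bullet_p\phi)$ and leaves the final identification $u\bullet_p\phi = \phi$ (scalar multiplication by the unit point is the identity, by the universal property of the unit object) implicit.
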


The next lemma confirms that $\psi_a$ has the anchor we would expect of a transport of $a$ along $\phi$.

\begin{lemma}
	If $a\in A$ and $\phi \in APath^D$ then $\anc\psi_a = T(\pi)\phi$.
	\begin{proof}
		We check
		\begin{align*}
			\anc \psi_a &= pT(\anc)T(\psi_a)\partial\tag{$\partial$ is section}\\
			&= pT(\anc)\alpha(\psi_a, \phi) \tag{soln to vector field}\\
			& = pcT(\anc)\phi \tag{inv. algd. target}\\
			&= T(\pi)\phi
		\end{align*}
		as required.
	\end{proof}
\end{lemma}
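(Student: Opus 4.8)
The plan is to prove the identity by turning the defining property of $\psi_a$ into an algebraic statement, rather than by invoking uniqueness of solutions. Both $\anc\psi_a$ and $T(\pi)\phi$ are arrows $R \to T(M)$, so it suffices to produce a chain of equalities between them assembled from the tangent-category structure and the involution algebroid axioms already established.

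First I would recover the undifferentiated $\anc\psi_a$ from its derivative. Since $\partial$ is a section of $p_R$ and $p$ is natural, for any $g: R \to T(M)$ we have $p\,T(g)\,\partial = g\,p_R\,\partial = g$; applying this to $g = \anc\psi_a$ and using $T(\anc\psi_a) = T(\anc)T(\psi_a)$ gives $\anc\psi_a = p\,T(\anc)\,T(\psi_a)\,\partial$. Next I would feed in the solution property of $\psi_a$ from the preceding lemma, namely $T(\psi_a)\partial = \alpha(\psi_a, \phi)$, to rewrite this as $\anc\psi_a = p\,T(\anc)\,\alpha(\psi_a, \phi)$. The substantive input is now the axiom (inv. algd. target), $T(\anc)\alpha = cT(\anc)\pi_1$, which collapses $T(\anc)\alpha(\psi_a, \phi)$ to $cT(\anc)\phi$ since $\pi_1$ selects the second argument $\phi$. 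Finally the tangent-category identity $pc = T(p)$ together with the fact that the anchor is a bundle morphism (so $p\anc = \pi$) yields $p\,c\,T(\anc)\phi = T(p\anc)\phi = T(\pi)\phi$, completing the argument.

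I do not expect a serious obstacle here: the only nontrivial ingredient is (inv. algd. target), and the rest is bookkeeping with $c$, $p$ and naturality. The step most likely to trip one up is the first one, namely recognizing that one can strip the derivative off $T(\psi_a)\partial$ by post-composing with $p$ and using that $\partial$ is a section of the projection. It is worth noting that the initial condition $\psi_a 0_R = a$ plays no role in this computation: the identity $\anc\psi_a = T(\pi)\phi$ follows from the differential equation satisfied by $\psi_a$ alone, which is why one should expect it to hold along the whole transport and not merely at the basepoint.
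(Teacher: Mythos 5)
Your proposal is correct and follows exactly the same route as the paper's own proof: strip the derivative using that $\partial$ is a section of $p$, substitute the solution property $T(\psi_a)\partial = \alpha(\psi_a,\phi)$, apply (inv.\ algd.\ target), and finish with $pc = T(p)$ and $p\anc = \pi$. The only difference is that you spell out the bookkeeping justifications (naturality of $p$, the flip identity $pc = T(p)$) that the paper leaves implicit in its first and last steps.
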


% subsection transport_along_a_paths (end)

\subsection{Transport along A-homotopies} % (fold)
\label{sub:transport_along_a_homotopies}

Next we develop the theory analogous to that developed in \ref{sub:transport_along_a_paths} with A-homotopies in place of A-paths.
In \ref{def:admissible-homotopy} we define an A-homotopy as a map from $R\times R$ into pairs of vectors in an involution algebroid that is both horizontally and vertically an $A$-path and moreover satisfies a commutativity condition that is analogous to the classical condition determining a flat connection on a manifold.
% In classical Lie theory (for instance in \cite{MR1973056}) an A-homotopy in an integrable Lie algebroid $A$ corresponds to a map $\nabla(I\times I) \rightarrow \mathbb{G}$ into the groupoid $\mathbb{G}$ integrating $A$.
% We have used $\nabla (I\times I)$ to denote the pair (or chaotic, or indiscrete) groupoid on $I\times I$.
% In a future paper we show that the commutativity condition in \ref{def:admissible-homotopy} precisely corresponds to the fact that $\nabla(I\times I)$ is commutative.
As before transport along A-homotopies requires a complete curve object $R$ as described in \ref{sub:curve_objects}.

Therefore we show that if we start with an infinitesimal variation $h$ of A-homotopies and an element $a\in A$ such that $\anc a$ coincides with the beginning of the A-homotopy then we can transport $a$ along $h$ to obtain a map $\chi:R\times R \rightarrow A$.
In the case of A-homotopies it seems like we have a choice of how to obtain this $\chi$.
We could either integrate the horizontal A-path first and the vertical A-path second or the other way around.
In \ref{prop:path-independence} we show that our construction is independent of the choice of path we use.

\begin{definition}[Admissible homotopy]\label{def:admissible-homotopy}
	An \emph{admissible homotopy in $A$ (or A-homotopy)} is an arrow $h:R\times R \rightarrow A_2\cong A\ts{\pi}{\pi}A$ such that
	\begin{align*}
		\anc h_0 &= T(\pi)T(h_0)(\partial\times 0) \tag{hor. A-path}\\
		\anc h_1 &= T(\pi)T(h_1)(0\times \partial) \tag{vert. A-path}\\
		\alpha(h_0, T(h_1)(\partial\times 0)) & = T(h_0)(0\times \partial) \tag{A-homotopy ctd.}
	\end{align*}
	where we have used $h_i$ to denote $\pi_i h$ for $i\in \{0, 1\}$.
\end{definition}

\begin{definition}[Tangent to admissible homotopies]
	The object $AHtpy^D$ is the subobject of $T(A_2)^{R\times R}$ of $h\in T(A_2)^{R\times R}$ such that
	\begin{align*}
		T(\anc)h_0 &= cT^2(\pi)T(h_0)(\partial\times 0)\tag{hor. A-path}\\
		T(\anc)h_1 &= cT^2(\pi)T(h_1)(0 \times \partial)\tag{vert. A-path}\\
		T(\alpha)(h_0, cT(h_1)(\partial\times 0)) &= cT(h_0)(0\times \partial)\tag{A-homotopy ctd.}
	\end{align*}
	where we regard $h_0\in T(A)^{R\times R}$ and $h_1\in T(A)^{R\times R}$ where $h_i = \pi_ih$.
\end{definition}

Next we use \ref{def:infinite-composition} to define the various directions in which we can transport an element $a\in A$ given an A-homotopy $h$.

\begin{definition}[Horizontal and vertical infinite composition]\label{not:hor-and-vert-infinite-composition}
	If $h$ is an A-homotopy and $a\in A$ such that $\anc a = T(\pi)h_0(0_R, 0_R)$ then the arrows $\psi_0$, $\psi_1$, $\Phi_0$ and $\Phi_1$ are the unique infinite composites satisfying the following conditions:-
	\begin{itemize}
		\item $\psi_0:R \rightarrow A$, $\psi_0 0_R = a$ and $T(\psi_0)\partial = \alpha(\psi_0, h_0(id, 0_R))$
		\item $\psi_1: R \rightarrow A$, $\psi_1 0_R = a$ and $T(\psi_1)\partial = \alpha(\psi_1, h_1(0_R, id))$
		\item $\Phi_0:R\times R \rightarrow A$, $\Phi_0 (0_R, id) = \psi_1$ and $T(\Phi_0)(\partial\times 0) = \alpha(\Phi_0, h_0)$
		\item $\Phi_1:R\times R \rightarrow A$, $\Phi_1 (id, 0_R) = \psi_0$ and $T(\Phi_1)(0\times\partial) = \alpha(\Phi_1, h_1)$
	\end{itemize}
	where in the latter two cases we create solutions $R \rightarrow A^R$ and apply the hom-tensor adjunction.
\end{definition}

The following lemma serves to introduce the proof technique that we use repeatedly in this section and also is useful in its own right.

\begin{lemma}\label{lem:Phi_0-is-psi_0}
	\begin{equation*}
	\Phi_0(id, 0_R) = \psi_0
	\end{equation*}
	\begin{proof}
		We use the uniqueness condition in the definition of curve object.
		First we check that both sides have the same initial condition:
		\begin{equation*}
		\Phi_0(id, 0_R)0_R = \Phi_0(0_R, 0_R) = \psi_1 0_R = a = \psi_0 0_R
		\end{equation*}
		Second we check that both sides solve the same vector field.
		On the one hand:
		\begin{align*}
			T(\Phi_0(id, 0_R))\partial &= T(\Phi_0)(id, T(0_R))\partial\\
			&= T(\Phi_0)(\partial\times 0)(id, 0_R)\\
			&=\alpha(\Phi_0(id, 0_R), h_0(id, 0_R))\tag{$\Phi_0$ soln.}
		\end{align*}
		and on the other hand:
		\begin{align*}
			T(\psi_0)\partial &= \alpha(\psi_0, h_0(id, 0_R)) \tag{$\psi_0$ soln.}
		\end{align*}
		as required.
	\end{proof}
\end{lemma}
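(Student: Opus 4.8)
The plan is to appeal to the uniqueness clause in the definition of a complete curve object (\ref{def:complete-curve-object}): since $\psi_0$ is characterised in \ref{not:hor-and-vert-infinite-composition} as the \emph{unique} map $R \to A$ with $\psi_0 0_R = a$ and $T(\psi_0)\partial = \alpha(\psi_0, h_0(id, 0_R))$, it suffices to verify that the composite $\Phi_0(id, 0_R)$ satisfies these same two conditions. Both conditions concern a single map $R \to A$, so once they are checked the uniqueness clause forces $\Phi_0(id, 0_R) = \psi_0$ with no further work. Equivalently, one could lift both maps to solutions of the autonomous vector field $X$ of \ref{lem:infinitesimal-compn-vector-field} on the bundle of composables $A_{h_0(id, 0_R)}$ and invoke uniqueness of solutions there; the two formulations are interchangeable, and this comparison strategy is the template I expect to reuse throughout the section.

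For the initial condition I would combine the defining property $\Phi_0(0_R, id) = \psi_1$ of $\Phi_0$ with $\psi_1 0_R = a$, giving
\[
\Phi_0(id, 0_R)0_R = \Phi_0(0_R, 0_R) = \psi_1 0_R = a = \psi_0 0_R,
\]
so the two maps agree at $0_R$.

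The substantive step is matching the transport equations, and this is where I expect the only real care to be needed. Writing $\Phi_0(id, 0_R)$ as the composite of $(id, 0_R): R \to R \times R$ with $\Phi_0$ and applying functoriality of $T$, the chain rule reads $T(\Phi_0(id, 0_R))\partial = T(\Phi_0)(id, T(0_R))\partial$. The key identity is that reparametrising the base commutes with differentiation, in the sense that $(id, T(0_R))\partial = (\partial \times 0)(id, 0_R)$: both sides send $r \in R$ to the pair consisting of $\partial$ in the first slot and the zero vector at the constant point $0_R$ in the second. Substituting the defining equation $T(\Phi_0)(\partial \times 0) = \alpha(\Phi_0, h_0)$ of $\Phi_0$ then yields
\[
T(\Phi_0(id, 0_R))\partial = \alpha(\Phi_0, h_0)(id, 0_R) = \alpha\bigl(\Phi_0(id, 0_R), h_0(id, 0_R)\bigr),
\]
which is exactly the equation characterising $\psi_0$. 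The main obstacle is therefore purely bookkeeping: confirming the reparametrisation identity $(id, T(0_R))\partial = (\partial \times 0)(id, 0_R)$ and checking that the anchor and composability typing constraints persist after restricting along $(id, 0_R)$, so that $\alpha$ stays well-typed. Once these are in place the uniqueness clause closes the argument immediately.
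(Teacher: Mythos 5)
Your proposal is correct and follows essentially the same route as the paper's own proof: the identical initial-condition chain $\Phi_0(id,0_R)0_R = \Phi_0(0_R,0_R) = \psi_1 0_R = a = \psi_0 0_R$, the same chain-rule computation with the reparametrisation identity $(id, T(0_R))\partial = (\partial\times 0)(id,0_R)$, and the same appeal to the uniqueness clause of the complete curve object to conclude. The alternative framing via the vector field $X$ on the bundle of composables is, as you note, just a repackaging of the same uniqueness argument.
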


Now we show that the transport along A-homotopies is independent of the path chosen.
In other words we show $\Phi_0 = \Phi_1$.
Recall that $\Phi_0$ transports $a$ vertically then horizontally.
First we measure the vertical infinitesimal variation of $\Phi_0$.

\begin{lemma}\label{lem:hor-inf-rectangle}
	\begin{equation*}
	T(\Phi_0)(0\times \partial) = \alpha(\Phi_0, h_1)
	\end{equation*}
	\begin{proof}
		We use the uniqueness condition in the definition of curve object.
		First we check that both sides have the same initial condition:
		\begin{align*}
			\alpha(\Phi_0, h_1)(0_R, id) &= \alpha(\Phi_0(0_R, id), h_1(0_R, id))\\
			&=\alpha(\psi_1, h_1(0_R, id))\tag{init. ctd. $\Phi_0$}\\
			&=T(\psi_1)\partial \tag{$\psi_1$ soln.}\\
			&=T(\Phi_0(0_R, id))\partial \tag{init. ctd. $\Phi_0$}\\
			&= T(\Phi_0)(T(0_R)\partial, \partial)\\
			&=T(\Phi_0)(0\times\partial)(0_R, id)
		\end{align*}
		Second we check that both sides solve the same vector field.
		On the one hand:
		\begin{align*}
			T(T(\Phi_0)(0\times\partial))(\partial\times 0) &= cT(T(\Phi_0)(\partial\times0))(0\times\partial)\\
			&= cT(\alpha(\Phi_0, h_0))(0\times \partial)\tag{$\Phi_0$ is soln.}\\
			&= cT(\alpha)(T(\Phi_0)(0\times \partial), T(h_0)(0\times\partial))\\
		\end{align*}
		and on the other hand:
		\begin{align*}
			T(\alpha(\Phi_0, h_1))(\partial\times 0) &= T(\alpha)(T(\Phi_0)(\partial\times 0), T(h_1)(\partial\times 0))\\
			&= T(\alpha)(\alpha(\Phi_0, h_0), T(h_1)(\partial\times 0))\tag{$\Phi_0$ soln.}\\
			&=cT(\alpha)(\alpha(\Phi_0, h_1), cT(\alpha)(h_0, cT(h_1)(\partial\times 0)))\tag{inv. algd. flip}\\
			&=cT(\alpha)(\alpha(\Phi_0, h_1), T(h_0)(0\times\partial))\tag{A-homotopy ctd.}
		\end{align*}
		as required.
	\end{proof}
\end{lemma}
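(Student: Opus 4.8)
The plan is to apply the curve-object uniqueness principle already used in \ref{lem:Phi_0-is-psi_0}: to prove that the two maps $R\times R \to T(A)$ given by $T(\Phi_0)(0\times\partial)$ and $\alpha(\Phi_0, h_1)$ agree, I would exhibit them as solutions of the same horizontal dynamical system with the same initial data along the vertical axis $\{0_R\}\times R$, and then invoke the uniqueness clause of \ref{def:complete-curve-object}. Since $\Phi_0$ is itself characterised by its restriction $\Phi_0(0_R, id) = \psi_1$ together with the horizontal equation $T(\Phi_0)(\partial\times 0) = \alpha(\Phi_0, h_0)$, the natural direction in which to integrate is the horizontal one, so I would differentiate both candidate maps in the $\partial\times 0$ direction (passing through the hom-tensor adjunction so that each becomes a genuine solution of a vector field).

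For the initial condition I would restrict both sides to $(0_R, id)$. On the right this is immediately $\alpha(\psi_1, h_1(0_R, id))$ using $\Phi_0(0_R, id) = \psi_1$, and on the left the same expression appears once one uses $\Phi_0(0_R, id) = \psi_1$ together with the defining equation $T(\psi_1)\partial = \alpha(\psi_1, h_1(0_R, id))$ for $\psi_1$. So the two initial conditions match with essentially no computation.

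The substantive step is to check that both maps solve the same horizontal equation, and this is where the involution algebroid axioms enter. Differentiating the left-hand side in the $\partial\times 0$ direction, I would use the canonical flip $c$ to interchange the two derivatives and then substitute $T(\Phi_0)(\partial\times 0) = \alpha(\Phi_0, h_0)$, landing on an expression of the shape $cT(\alpha)(T(\Phi_0)(0\times\partial), T(h_0)(0\times\partial))$. Differentiating the right-hand side $\alpha(\Phi_0, h_1)$ in the same direction and again substituting the horizontal equation for $\Phi_0$ gives $T(\alpha)(\alpha(\Phi_0, h_0), T(h_1)(\partial\times 0))$. The crux is then to convert this last term into $cT(\alpha)(\alpha(\Phi_0, h_1), T(h_0)(0\times\partial))$ by a single application of (inv. algd. flip) in the form of \ref{lemma:alpha-comp}, after which the inner factor is rewritten using the tangent version of the (A-homotopy ctd.) condition from the definition of $AHtpy^D$ together with $cc = id$. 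This would show that both sides satisfy the common equation $T(-)(\partial\times 0) = cT(\alpha)(-, T(h_0)(0\times\partial))$.

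I expect the main obstacle to be precisely this flip computation: one must align the arguments of (inv. algd. flip) correctly --- identifying $v = \Phi_0$, $w = h_0$ and $x = T(h_1)(\partial\times 0)$, and verifying $p x = h_1$ via naturality of $p$ and $p\partial = id$ --- while keeping careful track of the numerous copies of $c$, since a single misplaced flip destroys the identity. Conceptually this is the exact analogue of using flatness of a connection to prove path-independence of parallel transport, with the Yang-Baxter style flip axiom playing the role of the curvature-free condition. Once the common equation and the matching initial condition are established, the desired identity $T(\Phi_0)(0\times\partial) = \alpha(\Phi_0, h_1)$ follows from the uniqueness of solutions guaranteed by \ref{def:complete-curve-object}.
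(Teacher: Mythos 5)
Your proposal is correct and follows essentially the same route as the paper: curve-object uniqueness, the identical initial-condition check along $(0_R, id)$ via $\Phi_0(0_R, id) = \psi_1$, and the identical horizontal differentiation showing both sides satisfy $T(-)(\partial\times 0) = cT(\alpha)(-, T(h_0)(0\times\partial))$, via one application of (inv. algd. flip) with $v = \Phi_0$, $w = h_0$, $x = T(h_1)(\partial\times 0)$ followed by (A-homotopy ctd.) and $cc = id$. The only cosmetic slip is your citation of \ref{lemma:alpha-comp}: that lemma concerns sections, and both the paper and your own described computation in fact apply the flip axiom of \ref{def:inv-algd} directly.
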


\begin{proposition}\label{prop:path-independence}
	\begin{equation*}
	\Phi_0 = \Phi_1
	\end{equation*}
	\begin{proof}
		We use the uniqueness condition in the definition of curve object.
		First we check that both sides have the same initial condition:
		\begin{align*}
			\Phi_1(id, 0_R) &= \psi_0 \tag{init. ctd. $\Phi_1$}\\
			&= \Phi_0(id, 0_R) \tag{\ref{lem:Phi_0-is-psi_0}}
		\end{align*}
		Second we check that both sides solve the same vector field.
		On the one hand $T(\Phi_0)(0\times\partial) = \alpha(\Phi_0, h_1)$ by \ref{lem:hor-inf-rectangle}.
		On the other hand $T(\Phi_1)(0\times\partial) = \alpha(\Phi_1, h_1)$ by \ref{not:hor-and-vert-infinite-composition}.
	\end{proof}
\end{proposition}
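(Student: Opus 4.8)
The plan is to invoke the uniqueness clause of the complete curve object (\ref{def:complete-curve-object}): a solution is pinned down by its value on the initial slice together with the vector field it solves, and for maps out of $R\times R$ this is applied fibrewise via the hom-tensor adjunction exactly as in \ref{not:hor-and-vert-infinite-composition}. Since $\Phi_1$ is by definition the unique solution of the vertical vector field $\alpha(-,h_1)$ issuing from the horizontal slice $\psi_0$, it suffices to show that $\Phi_0$ satisfies these same two conditions; then $\Phi_0$ and $\Phi_1$ solve the same initial value problem and must coincide.

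For the initial slice, $\Phi_1(id,0_R)=\psi_0$ by construction, so I need $\Phi_0(id,0_R)=\psi_0$, which is exactly \ref{lem:Phi_0-is-psi_0}. For the vector field, the defining equation of $\Phi_1$ is $T(\Phi_1)(0\times\partial)=\alpha(\Phi_1,h_1)$, so I need $T(\Phi_0)(0\times\partial)=\alpha(\Phi_0,h_1)$, which is exactly \ref{lem:hor-inf-rectangle}. With both the initial condition and the vector field agreeing, uniqueness forces $\Phi_0=\Phi_1$, and the proposition follows immediately.

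The crux of the argument --- and where I expect the real obstacle to lie --- is therefore \ref{lem:hor-inf-rectangle}. The map $\Phi_0$ is built by integrating \emph{horizontally} (it solves $T(\Phi_0)(\partial\times 0)=\alpha(\Phi_0,h_0)$), so there is no a priori reason for its vertical variation to again be governed by $\alpha$. Proving this demands a further application of curve-object uniqueness, now in the horizontal variable, in the course of which one commutes the two $T$-directions using the canonical flip $c$ and then rewrites the resulting horizontal derivative by means of the flip axiom (inv. algd. flip) together with the compatibility condition (A-homotopy ctd.) of \ref{def:admissible-homotopy}. Once that infinitesimal commutation of the rectangle is established, everything in the proposition itself is routine; all the genuine content has been pushed into the two preceding lemmas.
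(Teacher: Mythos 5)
Your proposal is correct and follows exactly the paper's own argument: curve-object uniqueness applied to the common initial slice $\psi_0$ (via \ref{lem:Phi_0-is-psi_0}) and the common vertical vector field $\alpha(-,h_1)$ (via \ref{lem:hor-inf-rectangle} for $\Phi_0$ and the defining property in \ref{not:hor-and-vert-infinite-composition} for $\Phi_1$). Your assessment that the substantive content lives in \ref{lem:hor-inf-rectangle}, proved by a second uniqueness argument using the canonical flip, (inv.\ algd.\ flip) and (A-homotopy ctd.), also matches the paper.
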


% subsection transport_along_a_homotopies (end)

% \subsection{Infinitesimal A-paths and A-homotopies} % (fold)
% \label{sec:infinitesimal_a_paths_and_a_homotopies}

% section infinitesimal_a_paths_and_a_homotopies (end)

\subsection{Infinitesimal A-paths} % (fold)
\label{sub:infinitesimal_a_paths}

In this section study the homotopy theory of involution algebroids when applied to paths and homotopies that are appropriately `infinitesimally close to an identity element'.
These infinitesimal paths and homotopies arise in a natural way in the method of integrating Lie algebroids described in \cite{MR1973056} and in a future paper we show how the theory presented in that paper can be translated to apply to involution algebroids in tangent categories.
In this section we treat infinitesimal A-paths and in the next infinitesimal A-homotopies.
The key results are \ref{prop:inf-paths-are-in-fibre} and \ref{prop:inf-A-homotopies-are-in-fibre} which show that any infinitesimal A-path or A-homotopy in an involution algebroid $A$ can be viewed as a path or homotopy respectively in a single fibre of $A$.
Both of these results require a complete curve object as described in \ref{sub:curve_objects}.

First we define infinitesimal A-paths and make precise what we mean by a path in a fibre of an involution algebroid.
Then we show how to use the flip $\alpha:\prolong \rightarrow T(A)$ to create an infinitesimal A-path $\vee \psi$ from a path in a fibre $\psi$.
In the other direction we combine $\alpha$ with solutions obtained from complete curve object $R$ (see \ref{sub:curve_objects}) to obtain a path $\wedge \phi$ in a fibre from an infinitesimal A-path $\phi$.
Then in \ref{prop:inf-paths-are-in-fibre} we show that $\vee$ and $\wedge$ are inverses.

\begin{definition}[Infinitesimal A-paths]
	The object $alg(wA)_1$ is the subobject of $T(A)^R$ consisting of the $\phi\in T(A)^R$ such that:-
	\begin{align*}
		p\phi &= \xi m!\tag{starts at zero}\\
		T(\pi)\phi 0_R &= 0 m \tag{source constant}\\
		T(\anc)\phi &= cT^2(\pi)T(\phi)\partial \tag{variation of A-paths}
	\end{align*}
	where $m = \pi p \phi 0_R$.
\end{definition}

\begin{definition}[Paths in fibres]
	The object $(A^R)^{\pi}_M$ is the subobject of $A^R$ consisting of the $\chi\in A^R$ such that:-
	\begin{align*}
		\chi 0_R &= \xi m \tag{starts at zero}\\
		\pi\chi &= m! \tag{$\pi$ constant}
	\end{align*}
	where $m = \pi\chi 0_R$.
\end{definition}

\begin{lemma}[Differentiation to A-path]
	The arrow $\vee:(A^R)^{\pi}_M \rightarrow alg(wA)_1$ defined by $\chi\mapsto \alpha(\xi m!, T(\chi)\partial)$ is well-typed.
	\begin{proof}
		First the base $m$ is preserved:
		\begin{align*}
			\pi p (\vee\chi) 0_R &= \pi p\alpha(\xi m!, T(\chi)\partial)0_R\\
			&= \pi\xi\pi\chi 0_R\tag{inv. algd. 0}\\
			&= \pi\chi 0_R = m
		\end{align*}
		Second $\vee \chi$ starts at zero:
		\begin{align*}
			p\alpha(\xi m!, T(\chi)\partial) = \xi m!  \tag{inv. algd. 0}
		\end{align*}
		Third $\vee\chi$ is source constant:
		\begin{align*}
			T(\pi)\alpha(\xi m!, T(\chi)\partial)0_R &= \anc pT(\chi)\partial 0_R \tag{inv. algd. source}\\
			&= \anc \chi0_R \tag{$\partial$ section}\\
			&= \anc \xi m \tag{starts at zero}\\
			&= 0m
		\end{align*}
		Fourth $\vee\chi$ is a variation of A-paths:
		\begin{align*}
			cT^2(\pi)T(\alpha(\xi m!, T(\chi)\partial))\partial &= cT(T(\pi)\alpha(\xi m!, T(\chi)\partial))\partial\\
			&= cT(\anc p T(\chi)\partial)\partial\tag{inv. algd. source}\\
			&= cT(\anc)T(\chi)\partial\tag{$\partial$ section}\\
			&= T(\anc)\alpha(\xi m, T(\chi)\partial) \tag{inv. algd. target}
		\end{align*}
		Therefore $\vee\chi\in alg(wA)_1$.
	\end{proof}
\end{lemma}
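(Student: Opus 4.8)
The goal is to show that for every fibrewise path $\chi \in (A^R)^{\pi}_M$ the arrow $\vee\chi = \alpha(\xi m!, T(\chi)\partial)$, with $m = \pi\chi 0_R$, lands in $alg(wA)_1$. The plan is to first note that, because $\chi$ lies in a single fibre (so $\pi\chi = m!$ is constant) and the anchor is a bundle morphism and hence preserves zeros, the pair $(\xi m!, T(\chi)\partial)$ is a legitimate element of the prolongation $\prolong$, so that $\alpha$ may be applied; this typing check is routine. It then remains to confirm that the base point agrees and that the three conditions (starts at zero), (source constant) and (variation of A-paths) defining $alg(wA)_1$ hold.

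The base point and the first two conditions I would dispatch directly from the identities recorded in the Remark following \ref{def:inv-algd}. Using (inv. algd. 0), i.e. $p\alpha = \pi_0$, the composite satisfies $p(\vee\chi) = \xi m!$, so it starts at the zero section, and its base is $\pi p(\vee\chi)0_R = \pi\xi m! 0_R = m$, matching the $m$ coming from $\chi$. For (source constant) I would apply (inv. algd. source), namely $T(\pi)\alpha = \anc p\pi_1$, to rewrite $T(\pi)(\vee\chi)0_R$ as $\anc p T(\chi)\partial 0_R$; since $\partial$ is a section of $p$ this collapses to $\anc\chi 0_R$, and the hypotheses $\chi 0_R = \xi m$ and $\anc\xi = 0$ reduce it to $0m$, exactly as required.

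The substantive step is (variation of A-paths), namely $T(\anc)(\vee\chi) = cT^2(\pi)T(\vee\chi)\partial$. The plan is to start from the right-hand side and transform it into the left: use functoriality of $T$ to rewrite $T^2(\pi)T(\vee\chi)$ as $T\big(T(\pi)(\vee\chi)\big)$, apply (inv. algd. source) inside to replace $T(\pi)(\vee\chi)$ by $\anc p T(\chi)\partial$, use once more that $\partial$ is a section of $p$ to collapse this to $\anc\chi$, and finally recognise $cT(\anc)T(\chi)\partial$ as $T(\anc)\alpha(\xi m!, T(\chi)\partial)$ via (inv. algd. target), i.e. $T(\anc)\alpha = cT(\anc)\pi_1$. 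The main obstacle is bookkeeping the canonical flip $c$: the single external $c$ on the right-hand side must be carried along unchanged while $T$ of the source identity is applied, and it must then match the $c$ produced by (inv. algd. target) on the left. Since (inv. algd. source) is itself obtained from (inv. algd. target) by applying $T(p)$, the two flips are compatible; once that alignment is checked, all the membership conditions are established and $\vee$ is well-typed.
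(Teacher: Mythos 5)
Your proposal is correct and follows essentially the same route as the paper's proof: the same appeals to (inv.\ algd.\ 0) for the base point and the starts-at-zero condition, to (inv.\ algd.\ source) together with $\partial$ being a section of $p$ for source constancy, and to functoriality of $T$, (inv.\ algd.\ source) and (inv.\ algd.\ target) in the same order for the variation-of-A-paths identity. The only difference is your explicit preliminary check that $(\xi m!, T(\chi)\partial)$ is a valid element of $\prolong$ (using $\anc\xi = 0$ and $\pi\chi = m!$), which the paper leaves implicit and which is correct.
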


\begin{lemma}[Integration to path in fibre]
	The arrow $\wedge: alg(wA)_1 \rightarrow (A^R)^{\pi}_M$ defined as the unique infinite composite satisfying $(\wedge \phi) 0_R = \xi m$ and $T(\wedge \phi) = \alpha(\wedge \phi, \phi)$ is well-typed.
	\begin{proof}
		First the base point $m$ is preserved:
		\begin{equation*}
			\pi (\wedge \phi) 0_R = \pi\xi m = m
		\end{equation*}
		Second $\wedge\phi$ is $\pi$ constant:
		\begin{align*}
			\pi(\wedge\phi) &= pT(\pi)T(\wedge\phi)\partial\tag{$\partial$ section}\\
			&= pT(\pi)\alpha(\wedge\phi, \phi) \tag{$\wedge\phi$ soln.}\\
			&= p\anc p \phi \tag{inv. algd. source}\\
			&= \pi p \phi \\
			&= \pi\xi m!\tag{starts at zero}\\
			&= m!
		\end{align*}
		Third $\wedge\phi$ starts at zero by the initial condition defining $\wedge\phi$.
		Therefore $\wedge\phi\in (A^R)^{\pi}_M$.
	\end{proof}
\end{lemma}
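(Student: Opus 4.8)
The plan is to show that for $\phi\in alg(wA)_1$ the infinite composite $\wedge\phi$ lands in the subobject $(A^R)^{\pi}_M$ of paths confined to a single fibre of $\pi$ and starting at the zero section. I would not reprove existence and uniqueness of $\wedge\phi$ from scratch: the variation-of-A-paths condition cutting out $alg(wA)_1$ is exactly the one defining $APath^D$, so $\phi$ is in particular a tangent to admissible paths, and the source-constant condition $T(\pi)\phi 0_R = 0m$ agrees with $\anc\xi m = 0m$, making $\xi m$ a legitimate starting element. Hence $\wedge\phi$ is precisely the infinite composite of \ref{def:infinite-composition} with initial element $\xi m$, so that the initial condition $(\wedge\phi)0_R = \xi m$ and the solution condition $T(\wedge\phi)\partial = \alpha(\wedge\phi, \phi)$ hold by construction. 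These are the only two properties of $\wedge\phi$ I will need.

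Granting this, two of the membership conditions are immediate: the base point is preserved since $\pi(\wedge\phi)0_R = \pi\xi m = m$, and $\wedge\phi$ starts at zero directly from its initial condition. The single genuine computation is that $\pi\wedge\phi$ is the constant path $m!$. I would obtain this by differentiating: because $\partial$ is a section of $p$ we may write $\pi\wedge\phi = pT(\pi)T(\wedge\phi)\partial$, and substituting the solution condition turns the right-hand side into $pT(\pi)\alpha(\wedge\phi, \phi)$. Now (inv. algd. source) rewrites $T(\pi)\alpha$ as $\anc p\pi_1$, yielding $p\anc p\phi$, and finally $p\anc = \pi$ together with the ``starts at zero'' condition $p\phi = \xi m!$ for $\phi$ collapse this to $\pi\xi m! = m!$, as required.

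I do not expect a serious obstacle; once the infinite-composition machinery of the previous subsection is available the lemma is essentially bookkeeping. The one point to watch is purely a matter of typing: before writing $\alpha(\wedge\phi, \phi)$ one must know that the pair $(\wedge\phi, \phi)$ factors through the prolongation $\prolong$, i.e.\ that $\anc\wedge\phi = T(\pi)\phi$ holds along the entire solution and not merely at $0_R$. This anchor-matching is maintained by the same argument that proves $\anc\psi_a = T(\pi)\phi$ for ordinary infinite composites, so I would simply invoke that result rather than re-derive it.
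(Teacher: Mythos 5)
Your proof is correct and takes essentially the same approach as the paper: the same three membership checks, with the $\pi$-constancy established by exactly the same chain of rewrites ($\partial$ a section of $p$, the solution condition, (inv.\ algd.\ source), $p\anc = \pi$, and the starts-at-zero condition $p\phi = \xi m!$). Your additional remarks --- identifying $alg(wA)_1$ inside $APath^D$ so that $\wedge\phi$ exists, and noting that $\anc(\wedge\phi) = T(\pi)\phi$ is needed for $\alpha(\wedge\phi,\phi)$ to be well-typed --- only make explicit what the infinite-composition machinery of \ref{def:infinite-composition} and the lemma $\anc\psi_a = T(\pi)\phi$ already guarantee, and the paper leaves implicit.
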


\begin{proposition}\label{prop:inf-paths-are-in-fibre}
	The arrows $\vee$ and $\wedge$ are inverses.
	\begin{proof}
		First
		\begin{align*}
			\vee(\wedge \phi) &= \alpha(\xi m!, T(\wedge\phi)\partial)\\
			&= \alpha(\xi m!, \alpha(\wedge\phi, \phi))\tag{$\wedge\phi$ is soln.}\\
			&= \phi \tag{inv. algd. inv.}
		\end{align*}
		because $p\phi = \xi m!$.
		Second $\wedge(\vee\chi)$ is the unique infinite composite satisfying $T(\wedge\vee\chi)\partial = \alpha(\wedge\vee\chi, \vee\chi)$ and $(\wedge\vee\chi)0_R = \xi m$.
		But $\chi$ satisfies these equations.
		Indeed $\chi 0_R = \xi m$ because $\chi$ starts at zero and
		\begin{align*}
			\alpha(\chi, \vee\chi) &= \alpha(\chi, \alpha(\xi m!, T(\chi)\partial))\\
			&= T(\chi)\partial \tag{inv. algd. inv.}
		\end{align*}
		Therefore $\vee$ and $\wedge$ are inverses.
	\end{proof}
\end{proposition}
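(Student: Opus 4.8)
The plan is to verify the two composites $\vee\wedge$ and $\wedge\vee$ separately; in both directions the workhorse is the involution axiom (inv. algd. inv.), which we read in the form $\alpha(pw, \alpha(v, w)) = w$.

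First I would treat $\vee(\wedge\phi)$, the easier of the two. Unfolding the definition of $\vee$ gives $\vee(\wedge\phi) = \alpha(\xi m!, T(\wedge\phi)\partial)$, and the defining vector-field equation for $\wedge\phi$ rewrites $T(\wedge\phi)\partial$ as $\alpha(\wedge\phi, \phi)$. This exhibits the expression as $\alpha(\xi m!, \alpha(\wedge\phi, \phi))$, which is precisely the left-hand side of (inv. algd. inv.) applied with $v = \wedge\phi$ and $w = \phi$, provided $\xi m!$ equals $p\phi$. But that is exactly the `starts at zero' condition $p\phi = \xi m!$ built into membership in $alg(wA)_1$, so (inv. algd. inv.) collapses the composite to $\phi$.

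Second, for $\wedge(\vee\chi)$ I would appeal to the uniqueness clause in the definition of a complete curve object (\ref{def:complete-curve-object}). Since $\wedge(\vee\chi)$ is by construction the unique infinite composite satisfying the initial condition $(\wedge\vee\chi)0_R = \xi m$ and the vector-field equation $T(\wedge\vee\chi)\partial = \alpha(\wedge\vee\chi, \vee\chi)$, it suffices to check that $\chi$ itself solves this same data. The initial condition for $\chi$ is immediate from the `starts at zero' condition defining $(A^R)^{\pi}_M$. For the vector-field equation I would unfold $\vee\chi = \alpha(\xi m!, T(\chi)\partial)$ and compute $\alpha(\chi, \vee\chi) = \alpha(\chi, \alpha(\xi m!, T(\chi)\partial))$; applying (inv. algd. inv.) with $v = \xi m!$ and $w = T(\chi)\partial$ turns this into $T(\chi)\partial$, once one observes that $\chi = pT(\chi)\partial$, which follows from naturality of $p$ together with the fact that $\partial$ is a section of $p$.

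The main obstacle I anticipate is bookkeeping rather than conceptual: one must confirm that every application of (inv. algd. inv.) is correctly typed, i.e. that the pairs fed to $\alpha$ actually live in the prolongation $\prolong$ (the base-point and source-constancy matching encoded in the side condition $\anc\pi_0 = T(\pi)\pi_1$). These typing checks are essentially the content of the two preceding well-typedness lemmas for $\vee$ and $\wedge$, so they can be invoked rather than re-proved; what remains is to verify that the first argument $\xi m!$ agrees with $p$ of the second argument at each step, which is exactly guaranteed by the `starts at zero' conditions in the definitions of $alg(wA)_1$ and $(A^R)^{\pi}_M$.
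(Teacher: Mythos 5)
Your proposal is correct and follows essentially the same route as the paper's own proof: the composite $\vee\wedge$ collapses via (inv.\ algd.\ inv.) using the ``starts at zero'' condition $p\phi = \xi m!$, and the composite $\wedge\vee$ is handled by the uniqueness clause for infinite composites, checking that $\chi$ satisfies the same initial condition and vector-field equation. Your explicit observation that $\chi = pT(\chi)\partial$ (naturality of $p$ plus $\partial$ being a section) is a typing detail the paper leaves implicit, but otherwise the arguments coincide.
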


% subsection infinitesimal_a_paths (end)

\subsection{Infinitesimal A-homotopies} % (fold)
\label{sub:infinitesimal_a_homotopies}

Now we prove the analogous result to \ref{prop:inf-paths-are-in-fibre} with A-homotopies in place of A-paths.
So roughly speaking we show that an infinitesimal A-homotopy in an involution algebroid $A$ is the same as a homotopy in a fibre of $A$.
First we define infinitesimal A-homotopies in the appropriate way and make precise what we mean by a homotopy in a fibre of $A$.
Then we show how to use the flip $\alpha:\prolong \rightarrow T(A)$ to create an infinitesimal A-homotopy $\vee \chi$ from a homotopy in a fibre $\chi$.
In the other direction we combine $\alpha$ with solutions obtained from complete curve object $R$ (see \ref{sub:curve_objects}) to obtain a homotopy $\wedge h$ in a fibre from an infinitesimal A-homotopy $h$.
Then in \ref{prop:inf-A-homotopies-are-in-fibre} we show that $\vee$ and $\wedge$ are inverses.

\begin{definition}
	The object $alg(wA)_2$ is the subobject of $T(A_2)^{R\times R}$ on the elements $h\in T(A_2)^{R\times R}$ such that:-
	\begin{align*}
		ph_i &= \xi m!\tag{starts at zero}\\
		T(\pi)h_i(0_R, 0_R) &= 0m \tag{source constant}\\
		T(\anc) h_0 &= cT^2(\pi)T(h_0)(\partial \times 0)\tag{hor. A-path}\\
		T(\anc) h_1 &= cT^2(\pi)T(h_1)(0\times \partial) \tag{vert. A-path}\\
		cT(h_0)(0\times \partial) &= T(\alpha)(h_0, cT(h_1)(\partial\times 0)) \tag{A-homotopy ctd.}
	\end{align*}
	where $m = \pi\pi_0 p h(0_R, 0_R)$ and $h_i = T(\pi_i) h:R\times R \rightarrow T(A)$ for $i\in \{0, 1\}$.
\end{definition}

\begin{definition}
	The object $(A^{R\times R})^{\pi}_M$ is the subobject of $A^{R\times R}$ on the elements $\eta\in A^{R\times R}$ such that:-
	\begin{align*}
		\eta(0_R, 0_R) &= \xi m \tag{starts at zero}\\
		\pi\eta &= m! \tag{$\pi$ constant}
	\end{align*}
	where $m = \pi\eta(0_R, 0_R)$.
\end{definition}

\begin{definition}[Differentiation to admissible homotopies]
	The arrow $\vee:(A^{R\times R})^{\pi}_M \rightarrow alg(wA)_2$ defined by
	\begin{equation*}
	\eta \mapsto (\alpha(\xi m!, T(\eta)(\partial\times 0)), \alpha(\xi m!, T(\eta)(0\times \partial)))
	\end{equation*}
	is well-typed.
	\begin{proof}
		First the base point $m$ is preserved:
		\begin{align*}
			\pi\pi_0 p (\vee\eta) (0_R, 0_R) & = \pi p \alpha(\xi m!, T(\eta)(\partial\times 0)) (0_R, 0_R)\\
			&=m \tag{inv. algd. 0}
		\end{align*}
		Second $\vee\eta$ starts at zero:
		\begin{align*}
			p(\vee\eta)_0 &= p\alpha(\xi m!, T(\eta)(\partial\times 0))\\
			&=\xi m \tag{inv. algd. 0}
		\end{align*}
		Third $\vee\eta$ is source constant:
		\begin{align*}
			T(\pi)(\vee\eta)_0(0_R, 0_R) &= T(\pi)\alpha(\xi m, T(\eta)(\partial\times 0))(0_R, 0_R)\\
			&=\anc p T(\eta)(\partial\times 0)(0_R, 0_R) \tag{inv. algd. source}\\
			&= \anc \eta(0_R, 0_R) \tag{$\partial$ section}\\
			&= \anc \xi m \tag{starts at zero}\\
			&= 0m
		\end{align*}
		Fourth $(\vee\eta)_0$ is a horizontal A-path:
		\begin{align*}
			c&T^2(\pi)T((\vee\eta)_0)(\partial\times 0)\\
			&=cT^2(\pi)T(\alpha(\xi m!, T(\eta)(\partial\times 0)))(\partial\times 0)\\
			&= cT(T(\pi)\alpha(\xi m!, T(\eta)(\partial\times 0)))(\partial\times 0)\\
			&= cT(\anc p T(\eta)(\partial\times 0))(\partial\times 0) \tag{inv. algd. source}\\
			&= cT(\anc \eta)(\partial\times 0) \tag{$\partial$ section}\\
			&= T(\anc)\alpha(\xi m!, T(\eta)(\partial\times 0 )) \tag{inv. algd. target}
		\end{align*}
		and similarly for the identities involving $(\vee\eta)_1$.
		Fifth $\vee\eta$ satisfies the A-homotopy condition:
		\begin{align*}
			T&(\alpha)((\vee\eta)_0, cT((\vee\eta)_1)(\partial\times 0))\\
			&=T(\alpha)(\alpha(\xi m!, T(\eta)(\partial\times 0)), cT(\alpha(\xi m!, T(\eta)(0\times \partial)))(\partial\times 0))\\
			&=T(\alpha)(\alpha(\xi m!, T(\eta)(\partial\times 0)), cT(\alpha)(T(\xi)T(m)T(!), T^2(\eta)T(0\times \partial))(\partial\times 0))\\
			&=T(\alpha)(\alpha(\xi m !, T(\eta)(\partial\times 0)), cT(\alpha)(T(\xi)T(m)T(!), T^2(\eta)T(0\times\partial)(\partial\times 0)))\\
			&=c T(\alpha)(\alpha(\xi m!, T(\xi)T(m)T(!)), c T^2(\eta)T(0\times \partial)(\partial\times 0))\tag{inv. algd. flip}\\
			&=c T(\alpha)(\alpha(\xi m!, T(\xi)T(m)T(!)), T^2(\eta)T(\partial\times 0)(0\times \partial))\\
			&=c T(\alpha)(\alpha(\xi m!, T(\xi)T(m)T(!)), T^2(\eta)T(\partial\times 0))(0\times \partial)\\
			&=c T(\alpha)(\alpha(\xi m!, 0\xi m!), T^2(\eta)T(\partial\times 0))(0\times \partial)\\
			&= cT(\alpha(\xi m, T(\eta)(\partial\times 0)))(0\times \partial)\tag{$\alpha$ linear}\\
			&= cT((\vee\eta)_0)(0\times\partial)
		\end{align*}
		Therefore $\vee\eta\in alg(wA)_2$.
	\end{proof}
\end{definition}

\begin{definition}[Integration to homotopy in fibre]
	The arrow $\wedge:alg(wA)_2 \rightarrow (A^{R\times R})^{\pi}_M$ defined by $h \mapsto \Phi_0 = \Phi_1$ of \ref{not:hor-and-vert-infinite-composition} is well-typed.
	\begin{proof}
		As usual $\wedge$ preserves the base point $m$:
		\begin{align*}
			\pi\Phi_0(0_R,0_R) &= \pi\psi_1 0_R \tag{init. ctd. $\Phi_0$}\\
			&= \pi\xi m \tag{init. ctd. $\psi_1$}\\
			&= m
		\end{align*}
		First $\Phi_0$ starts at zero:
		\begin{align*}
			\Phi_0(0_R, 0_R) &= \psi_1 0_R \tag{init. ctd. $\Phi_0$}\\
			&= \xi m \tag{init. ctd. $\psi_1$}
		\end{align*}
		Second $\Phi_0$ is $\pi$ constant:
		\begin{align*}
			\pi \Phi_0 &= pT(\pi)T(\Phi_0)(\partial \times 0) \tag{$\partial$ section}\\
			&= p T(\pi) \alpha (\Phi_0, h_0) \tag{$\Phi_0$ soln.}\\
			&= p\anc p h_0 \tag{inv. algd. source}\\
			&= p\anc \xi m! = m! \tag{$h$ starts at zero}
		\end{align*}
		Therefore $\wedge h\in (A^{R\times R})^{\pi}_M$.
	\end{proof}
\end{definition}

\begin{proposition}\label{prop:inf-A-homotopies-are-in-fibre}
	The arrows $\vee$ and $\wedge$ are inverses.
	\begin{proof}
		On the one hand note that
		\begin{equation*}
		\vee\wedge h = (\alpha(\xi m!, T(\wedge h)(\partial \times 0)), \alpha(\xi m!, T(\wedge h)(0\times \partial)))
		\end{equation*}
		and that:
		\begin{align*}
			\alpha(\xi m!, T(\wedge h)(\partial\times 0)) &= \alpha(\xi m, T(\Phi_0)(\partial\times 0))\\
			&= \alpha(\xi m!, \alpha(\Phi_0, h_0))\tag{$\Phi_0$ soln.}\\
			&= h_0 \tag{inv. algd. inv.}
		\end{align*}
		because $ph_0 = \xi m!$. 
		Similarly $(\vee\wedge h)_1 = h_1$ and therefore $\vee\wedge h = h$.
		On the other hand $\wedge \vee \eta$ is the unique infinite composite satisfying
		\begin{align*}
			T(\wedge\vee\eta)(\partial\times 0) &= \alpha(\wedge \vee \eta, (\vee\eta)_0) \\
			(\wedge\vee \eta)(0_R, id) &= \psi_1
		\end{align*}
		which using the definition of $\psi_1$ is equivalent to satisfying the following three conditions:-
		\begin{align*}
			T(\wedge\vee\eta)(\partial\times 0) &= \alpha(\wedge \vee \eta, (\vee\eta)_0) \\
			T(\wedge\vee\eta)(0\times \partial) &= \alpha(\wedge \vee \eta, (\vee\eta)_1) \\
			(\wedge\vee\eta)(0_R, 0_R) &= \xi m
		\end{align*}
		But $\eta$ satisfies these conditions.
		Indeed $\eta(0_R, 0_R) = \xi m$ because $\eta$ starts at zero and
		\begin{align*}
			\alpha(\eta, (\vee\eta)_0) &= \alpha(\eta, \alpha(\xi m!, T(\eta)(\partial\times 0)))\\
			&= T(\eta)(\partial\times 0)\tag{inv. algd. inv.}
		\end{align*}
		and similarly for the remaining condition involving $0\times \partial$.
		Therefore $\wedge\vee\eta = \eta$ and $\wedge$ and $\vee$ are inverses.
	\end{proof}
\end{proposition}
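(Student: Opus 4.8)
The plan is to treat this as the two-dimensional analogue of Proposition~\ref{prop:inf-paths-are-in-fibre}, using exactly the two tools that drove the A-path case: the involution axiom (inv. algd. inv.), which asserts $\alpha(p\pi_1, \alpha) = \pi_1$, and the uniqueness clause in the definition of a complete curve object (\ref{def:complete-curve-object}), which identifies two maps $R\times R \to A$ once they are shown to solve the same iterated vector field with the same initial data. As before, one composite collapses directly by the involution axiom while the other is pinned down by uniqueness.

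First I would dispatch the direction $\vee\wedge h = h$, which should be the easy one. Writing $\wedge h = \Phi_0 = \Phi_1$, the horizontal component of $\vee\wedge h$ is $\alpha(\xi m!, T(\Phi_0)(\partial\times 0))$. Since $\Phi_0$ solves $T(\Phi_0)(\partial\times 0) = \alpha(\Phi_0, h_0)$, this equals $\alpha(\xi m!, \alpha(\Phi_0, h_0))$, and because $ph_0 = \xi m!$ (the ``starts at zero'' condition) the involution axiom collapses it to $h_0$. For the vertical component I would invoke Lemma~\ref{lem:hor-inf-rectangle}, which supplies $T(\Phi_0)(0\times\partial) = \alpha(\Phi_0, h_1)$, and run the identical cancellation to recover $h_1$. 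Hence $\vee\wedge h = (h_0, h_1) = h$.

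The reverse direction $\wedge\vee\eta = \eta$ is where the curve-object uniqueness enters. By definition $\wedge\vee\eta$ is the infinite composite $\Phi_0 = \Phi_1$ attached to $h = \vee\eta$, characterised as the unique solution of $T(\wedge\vee\eta)(\partial\times 0) = \alpha(\wedge\vee\eta, (\vee\eta)_0)$ with vertical initial condition $(\wedge\vee\eta)(0_R, id) = \psi_1$. I would unfold the definition of $\psi_1$ (itself a vertical infinite composite) so as to replace this single boundary condition by a second solution equation $T(\wedge\vee\eta)(0\times\partial) = \alpha(\wedge\vee\eta, (\vee\eta)_1)$ together with the point condition $(\wedge\vee\eta)(0_R,0_R) = \xi m$. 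It then remains to check that $\eta$ satisfies all three: the point condition is precisely ``starts at zero'', while in each direction $\alpha(\eta, (\vee\eta)_i) = \alpha(\eta, \alpha(\xi m!, T(\eta)(\cdots))) = T(\eta)(\cdots)$, again by the involution axiom, using that $p\,T(\eta)(\partial\times 0) = \eta$ and likewise for $0\times\partial$. Uniqueness then forces $\wedge\vee\eta = \eta$.

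The main obstacle I anticipate is not any individual computation but the reduction to the uniqueness statement: I must verify carefully that ``being the unique infinite composite $\Phi_0 = \Phi_1$'' (which itself rests on the path-independence result \ref{prop:path-independence}) is equivalent to solving the pair of horizontal and vertical equations together with one basepoint condition, so that a map matching $\eta$ on these data is forced to equal $\eta$. Once that equivalence is in place, both directions reduce to a single application of (inv. algd. inv.), exactly mirroring \ref{prop:inf-paths-are-in-fibre}.
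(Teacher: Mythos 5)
Your proposal is correct and follows essentially the same route as the paper: the direction $\vee\wedge h = h$ collapses via the defining solution equations for $\Phi_0$ (horizontally via its definition, vertically via Lemma~\ref{lem:hor-inf-rectangle}, which is equivalent to invoking $\Phi_1$ through path-independence) together with (inv.\ algd.\ inv.) and $ph_i = \xi m!$, while the direction $\wedge\vee\eta = \eta$ uses exactly the reduction you describe — unfolding the initial condition $\psi_1$ into a second solution equation plus the basepoint condition, then checking $\eta$ satisfies all three by (inv.\ algd.\ inv.) and uniqueness of infinite composites. The ``obstacle'' you flag is handled in the paper precisely as you propose, by the definition of $\psi_1$ in \ref{not:hor-and-vert-infinite-composition}.
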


% subsection infinitesimal_a_homotopies (end)

% section the_homotopy_theory_of_involution_algebroids (end)

\bibliography{references}{}
\bibliographystyle{plain}

\end{document}